\DeclareMathOperator{\GL}{GL}
\DeclareMathOperator{\PGL}{PGL}
\DeclareMathOperator{\Res}{Res}
\DeclareMathOperator{\Ind}{Ind}
\DeclareMathOperator{\Dist}{Dist}
\DeclareMathOperator{\St}{St}
\DeclareMathOperator{\cind}{c-ind}
\DeclareMathOperator{\tr}{tr}
\DeclareMathOperator{\ord}{ord}
\DeclareMathOperator{\sm}{sm}
\DeclareMathOperator{\sing}{sing}
\DeclareMathOperator{\coh}{coh}
\DeclareMathOperator{\cyc}{cyc}
\DeclareMathOperator{\al}{al}
\DeclareMathOperator{\sgn}{sgn}
\DeclareMathOperator{\vol}{vol}
\DeclareMathOperator{\ur}{ur}
\DeclareMathOperator{\fdhar}{fd,har}
\DeclareMathOperator{\ES}{ES}
\DeclareMathOperator{\ev}{ev}
\DeclareMathOperator{\HH}{H}
\DeclareMathOperator{\BS}{BS}
\DeclareMathOperator{\Hom}{Hom}
\DeclareMathOperator{\dc}{d}
\newcommand{\N}{{\mathbb N}}
\newcommand{\rec}{\operatorname{rec}}
\newcommand{\cf}{{\mathbbm 1}}
\newcommand{\Z}{{\mathbb Z}}
\newcommand{\Q}{{\mathbb Q}}
\newcommand{\C}{{\mathbb C}}
\newcommand{\R}{{\mathbb R}}
\newcommand{\F}{{\mathbb F}}
\newcommand{\A}{{\mathbb A}}
\newcommand{\I}{{\mathbb I}}
\renewcommand{\det}{\operatorname{det}}
\newcommand{\id}{\operatorname{id}}
\renewcommand{\Dist}{\operatorname{Dist}}
\newcommand{\iDist}{\operatorname{IDist}}
\newcommand{\SO}{\operatorname{SO}}
\renewcommand{\O}{\operatorname{O}}
\newcommand{\Sym}{\operatorname{Sym}}
\newcommand{\bound}{{\operatorname{b}}}
\newcommand{\p}{{\mathfrak{p}}}
\newcommand{\into}{\hookrightarrow}
\newcommand{\too}{\longrightarrow}								
\newcommand{\mapstoo}{\longmapsto}
\theoremstyle{plain}
\newtheorem{Lem}{Lemma}[section]
\newlength{\@thlabel@width}%
\newcommand{\thmenumhspace}{\settowidth{\@thlabel@width}{\itshape1.}\sbox{\@labels}{\unhbox\@labels\hspace{\dimexpr-\leftmargin+\labelsep+\@thlabel@width-\itemindent}}}
\newtheorem{Pro}[Lem]{Proposition}
\newtheorem{Thm}[Lem]{Theorem}
\newtheorem{Def}[Lem]{Definition}
\newtheorem{Rem}[Lem]{Remark}
\newtheorem{Cor}[Lem]{Corollary}
\newtheorem{Exa}[Lem]{Example}
\newtheorem{Ass}{Assumption}
\title{On Shalika models and p-adic L-functions}
\subjclass[2010]{Primary 11F67; Secondary 11F70, 22E55}
\author[L. Gehrmann]{Lennart Gehrmann}
\address{L. Gehrmann \\ Fakult\"at f\"ur Mathematik \\ Universit\"at Duisburg-Essen \\ Thea-Leymann-Stra\ss e 9 \\ 45127 Essen \\ Germany}
\email{lennart.gehrmann@uni-due.de}
\begin{document}

\begin{abstract} We use modular symbols to construct p-adic L-functions for cohomological cuspidal automorphic representations on GL(2n), which admit a Shalika model.
Our construction differs from former ones in that it systematically makes use of the representation theory of $p$-adic groups.
\end{abstract}
\maketitle
\tableofcontents

\section*{Introduction}
Modular symbols and integral formulas for special values of $L$-function were first used by Mazur-Swinnerton-Dyer \cite{MSD} and Manin \cite{Man} to construct $p$-adic $L$-functions for cuspidal elliptic eigenforms.
Besides generalizing the construction to automorphic representations on $\GL_{2}$ over other fields than the rationals (cf.~\cite{Man2}) there are several directions in which one can generalize the method to higher rank groups:

One can study the $p$-adic behaviour of Rankin-Selberg $L$-functions associated to cuspidal automorphic representations on $\GL_n\times \GL_{n-1}$ as has been done most notably by Schmidt \cite{Sch}, Kazhdan-Mazur-Schmidt \cite{KMS} and Januszewski \cite{Jan0}, \cite{Jan}.
In \cite{Mahnkopf} Mahnkopf studies the case of the standard $L$-function of a cuspidal representation of $\GL_3$.

In this paper we consider the standard $L$-function of a cuspidal automorphic representation $V$ on $\GL_{2n}$, which admits a Shalika model, over a totally real field $F$.
The construction of a $p$-adic $L$-function in this setting has been carried out by Ash-Ginzburg \cite{AG} under the following assumptions:
\begin{enumerate}[(A)]
  \item The cuspidal automorphic representation is cohomological with respect to the trivial representation. \label{a}
  \item The local components $V_{\p}$ are spherical and ordinary for all places $\p$ above $p$. \label{b}
	\item A certain restriction on the $p$-class group of $F$. \label{c}
\end{enumerate}
The main aim of this article is to relax the above assumptions.
In particular, we get rid of assumption \eqref{c} completely and work with a cuspidal representation, which is cohomological with respect to an arbitrary algebraic representation $V_{\al}$.
Our methods are somewhat different from the ones of Ash and Ginzburg. Whereas their arguments are close to the classical ones, i.e.~using Hecke relations to prove the distribution property and the boundedness of the distribution, we follow the strategy of Spie\ss~in \cite{Sp}, which builds on work of Darmon (cf.~\cite{D}).
The two main features of this construction are the following: All computations, e.g.~the distribution and interpolation property, are purely local and we construct the distribution from a cohomology class in the group cohomology of an $S_{p}$-arithmetic subgroup of $\GL_{2n}$ with values in $(V_{p}\otimes V_{\al})^{\vee}$, where $V_{p}$ is the tensor product $\otimes_{\p\in S_{p}}V_{\p}$.
The existence of a well-behaved lattice inside the locally algebraic representation $V_{p}\otimes V_{\al}$ then implies the boundedness of the distribution.

The main results of the paper are:
\begin{itemize}
\item Definition of the distribution in Section \ref{globaldist}
\item Proof of an interpolation property (see Proposition \ref{interpo})
\item Proof of rationality of the distribution (see Corollary \ref{distrational})
\item Proof of integrality of the distribution (see Corollary \ref{distintegral})
\end{itemize}
Let us explain our results and their proofs in more detail:

The heart of the article is Section \ref{heart}.
We develop a theory of stabilizations with respect to the standard parabolic subgroup $P_{n}\subset \GL_{2n}$ of type $(n,n)$ in terms of the representation theory of $p$-adic groups.
More precisely, a local component $V_{\p}$ of $V$ admits a stabilization $\Theta$ if it is a quotient of a parabolically induced representation from $P_n$.
Thus, we can view $V_{\p}$ as the global sections of a sheaf on the quotient $P_{n}(F_{\p})\backslash\GL_{2n}(F_{\p})$.
We use the action of the Levi subgroup of $P_n$ to trivialize this sheaf on an open subset inside the open Bruhat cell.
This yields a map
$$\delta_{\Theta}\colon C_{c}^{0}(\GL_{n}(F_{\p}),\C)\too V_{\p}.$$
In the main Lemma \ref{integrality 2} we show that $\delta_{\Theta}$ (resp.~a variant of it for locally algebraic representations) respects integral structures on both sides as long as the stabilization $\Theta$ is \textit{weakly ordinary}.
This weak ordinarity condition is equivalent to the usual ordinarity condition in the $\GL_{2}$-case but strictly weaker in the higher rank case.
In Section \ref{localshalika} we define a modified Euler factor $E(\Theta,\chi_\p,s)$ for every character $\chi_\p\colon F_{\p}^{\ast}\to \C^{\ast}$ by integrating $\chi\circ\det$ over the pullback of the local Shalika functional by $\delta_{\Theta}$.
The modified Euler factors are holomorphic multiples of the local $L$-factors, i.e.~we have
$$
E(\Theta,\chi_\p,s)=e(\Theta,\chi_\p,s)\cdot L(V_\p\otimes\chi_\p,s),
 $$ where $e(\Theta,\chi_\p,s)$ is an entire function. Using explicit formulas for the Shalika functional we show that the modified Euler factors are the expected ones if $V_{\p}$ is a twist of an unramified principal series representation by a character. 

In the third section we globalize our construction.
Let $S_p$ be the set of places of $F$ above $p$.
Suppose we have given stabilizations $\Theta_{\p}$ of $V_{\p}$ for all places $\p\in S_{p}$, a critical half integer $s+1/2$ and a finite set of finite places $\Sigma$, which is disjoint from $S_{p}$. Then we construct a distribution $\mu_{\Theta_{\Sigma},s}$ on the Galois group $\mathcal{G}_{p}$ of the maximal abelian extension of $F$ unramified outside $p$ and $\infty$ such that the interpolation property
\begin{align*}
\int_{\mathcal{G}_{p}}{\chi(\gamma)\mu_{\Theta_{\Sigma},s}(d\gamma)}=\prod_{\p \in S_{p}}e(\Theta_{p},\chi_{p},s+1/2) \times L_{\Sigma}(V\otimes\chi,s+1/2)
\end{align*}
holds for all continuous characters $\chi\colon \mathcal{G}_p\to \C^{\ast}$.

Finally, in the last section we recast the definition of $\mu_{\Theta_{\Sigma},s}$ in terms of modular symbols. As an immediate consequence we get that the distribution takes values in a finite dimensional vector space over the field of definition $\mathcal{E}$ of the finite part of $V$.
Using results of Grobner and Raghuram on rationality of Shalika models (cf.~\cite{GRG}) we show the following more refined result: there exist periods $\Omega^{\varepsilon}$ for all characters $\varepsilon\colon F^{\ast}_\infty=(F\otimes\R)^{\ast}\to\left\{\pm 1\right\}$ such that
$$\int_{\mathcal{G}_{p}}{\chi(\gamma)\mu_{\Theta_{\Sigma},s}(d\gamma)} \in \mathcal{E}_{\chi}^{\prime}\Omega^{\chi_{\infty}}.$$
Here $\mathcal{E}^{\prime}\subset \C$ is the smallest extension of $\mathcal{E}$, over which the stabilizations $\Theta_{\p}$ are defined, and $\mathcal{E}^{\prime}_{\chi}$ is the field you get by adjoining the image of $\chi$.
Assuming that the stabilizations are weakly ordinary we show that the distributions $\mu_{\Theta_{\Sigma},s}$ are $p$-adically bounded provided that the locally algebraic representation $V_{\al}\otimes V_{p}$ admits a lattice $L$, which has a resolution by compactly induced representations of finite type (see Definition \ref{re} for a precise definition).
The two main steps in proving the boundedness are:
\begin{itemize}
	\item Arithmetic groups are of type (VFL) (as introduced by Serre in \cite{Se2}) and therefore, modular symbols with values in the lattice $L$ commute with flat base change.
	\item $\delta_{\Theta}$ respects integral structures.
\end{itemize}
We end the introduction by some comments on the existence of the above mentioned lattices: By the Breuil-Schneider conjecture the representation $V_{\al}\otimes V_{p}$ should always admit some lattice.
In the $\GL_{2}$-case Vignéras has shown in \cite{Vi} that the existence of some lattice is equivalent to the existence of a lattice, which has a good resolution.
In Section \ref{principal} we list known examples of locally algebraic representations in the higher rank case, which admit good lattices. Since smooth ordinary principal series representations admit good lattices by the work of Ollivier (cf.~\cite{Ol}) our construction covers all cases discussed in \cite{AG}.
In the higher weight case the existence of good lattices is known in special cases by results of Gro\ss e-Kl\"onne (see \cite{GK}).

\bigskip
\textbf{Acknowledgments}.
I am grateful to Michael Spie\ss~for sharing his ideas on the construction of $p$-adic $L$-functions.
I thank Jan Kohlhaase, Andreas Nickel and Vytautas Pa\v{s}k\=unas for several helpful discussions and Felix Bergunde for a thorough reading of an earlier draft of the article.
It is a pleasure to thank all past and current members of the Bielefeld arithmetic geometry study group for the daily all-important coffee break.
Finally, I like to thank the anonymous referee for a detailed list of remarks, which helped to improve the exposition of the article.

\bigskip
\textbf{Notions and Notations}. We will use the following notions and notations throughout the whole article.
At the beginning of each chapter there will be an additional set of notations which may be only valid for that given section.

The entry in the $i$-th row and $j$-th column of a matrix $A$ is denoted by $A_{ij}$. We denote the $n\times n$-identity matrix by $1_n$.

All rings are commutative and have a unit. The group of invertible elements of a ring $R$ will be denoted by $R^{\ast}$.
If $M$ is an $R$-module, we denote the dual module $\Hom_{R}(M,R)$ by $M^{\vee}$.

If $R$ is a ring and $G$ a group, we will denote the group ring of $G$ over $R$ by $R[G]$.
If $G$ is a topological group, we write $G^{\circ}$ for the connected component of the identity. Given a closed subgroup $H$ of a locally profinite group $G$ and an $R$-linear representation $M$ of $H$, the \textit{(smooth) induction} $\Ind^{G}_{H}M$ of $M$ from $H$ to $G$ is the space of all locally constant functions $f\colon G\to M$ such that $f(hg)=hf(g)$ for all $h\in H, g\in G$.
The induction $\Ind^{G}_{H}M$ is an $R$-module on which $G$ acts $R$-linearly via the right regular representation.
The \textit{(smooth) compact induction} $\cind^{G}_{H}M$ is the $R[G]$-submodule of $\Ind^{G}_{H}M$ consisting of functions which have compact support modulo $H$.
Let $\chi\colon G\to R^{\ast}$ be a character.
We write $R[\chi]$ for the $G$-representation, which underlying $R$-module is $R$ itself and on which $G$ acts via the character $\chi$.
Given a character $\chi\colon H\to R^{\ast}$ we will often write $\Ind^{G}_{H}\chi$ (resp.~$\cind^{G}_{H}\chi$) instead of $\Ind^{G}_{H}R[\chi]$ (resp.~$\cind^{G}_{H}R[\chi]$).

For a set $X$ and a subset $A\subset X$ the characteristic function $\cf_{A}\colon X\to \left\{0,1\right\}$ is defined by
\begin{align*}
\cf_{A}(x)=\begin{cases}
1 & \mbox{if } x\in A,\\
0 & \mbox{else}.
\end{cases}
\end{align*}
We fix a prime $p$ and embeddings
\begin{align*}
\C \xhookleftarrow{\iota_{\infty}} \overline{\Q} \xhookrightarrow{\iota_{p}} \C_{p}.
\end{align*}
We let $\ord_{p}$ denote the valuation on $\C_{p}$ (and on $\overline{\Q}$ via $\iota_{p}$) normalized such that $\ord_{p}(p)=1$.
The valuation ring of $\overline{\Q}$ with respect to $\ord_{p}$ will be denoted by $\overline{\mathcal{R}}$.

\begin{section}{Preliminaries on function and distribution spaces}
The purpose of this section is twofold.
Firstly, we want to fix notations.
Secondly, we want to collect results from the literature which we are going to use later. 
\begin{subsection}{Distributions and measures}\label{distmeas}
Given two topological spaces $X,Y$ we will write $C(X,Y)$ for the space of continuous functions from $X$ to $Y$.
If $R$ is a topological ring, we define $C_{c}(X,R) \subseteq C(X,R)$ as the subspace of continuous functions with compact support.
If we consider $Y$ (resp.~$R$) with the discrete topology, we will often write $C^{0}(X,Y)$ (resp.~$C_{c}^{0}(X,R)$) instead.

Since a locally constant map with compact support takes only finitely many different values, the canonical map
\begin{align}\label{compactfunc}
C^{0}_{c}(X,\Z)\otimes R\too C^{0}_{c}(X,R)
\end{align}
is an isomorphism of $R$-modules.
For a ring $R$ and an $R$-module $N$ the $R$-module of $N$-\textit{valued} \textit{distributions} on $X$ is given by $\Dist(X,N)=\Hom_{\Z}(C^{0}_{c}(X,\Z),N)$.
By \eqref{compactfunc} every distribution $\mu\in\Dist(X,N)$ extends uniquely to an $R$-linear homomorphism
\begin{align*}
C_{c}^{0}(X,R)\too N,\ f\mapstoo\int_{X}{f\ d\mu}.
\end{align*}
Suppose $f\colon X\to Y$ is a continuous map between compact spaces.
Then the pullback map
\begin{align*}
C^{0}(Y,R)\too C^{0}(X,R),\quad g\mapstoo g\circ f  
\end{align*}
induces a push forward map on distributions
\begin{align}\label{properpush}
f_{\ast}\colon\Dist(X,N)\too \Dist(Y,N).
\end{align}
If $G$ is a topological group and $H$ a closed subgroup, $G$ acts on $C^{0}(G/H,N)$ (resp.~$C^{0}_{c}(G/H,R))$ via left-multiplication, i.e.~$(gf)(x)=f(g^{-1}x)$.
This in turn induces a $G$-action on $\Dist(G/H,N)$ via $(gD)(f)=D(g^{-1}f)$.
In case $H$ is trivial, we can extend the action of $G$ to a $G\times G$-action on $C^0(G,N)$ (resp.~$C^{0}_{c}(G,R)$) by $(g_{1},g_{2})f(g)=f(g_{1}^{-1}gg_{2})$.

Now, let $\mathcal{X}$ be a profinite topological space and $E$ a $p$-adic field, i.e.~$E$ is a field of characteristic $0$ which is complete with respect to an absolute value $\left|\cdot\right|\colon E\to\R$ whose restriction to $\Q$ is the usual $p$-adic absolute value.
Until the end of this subsection $R$ will denote the valuation ring of $E$. 

Let $V$ be a finite dimensional $E$-vector space and $L\subset V$ a lattice.
The space $\Dist^{\bound}(\mathcal{X},V)$ of \textit{bounded distributions} is defined as the image of the inclusion $$\Dist(\mathcal{X},L)\otimes E\too \Dist(\mathcal{X},V).$$
The definition does not depend on the choice of lattice.
Any bounded $V$-valued distribution $\mu$ can uniquely be extended to a continuous $E$-linear homomorphism
$$
C(\mathcal{X},E)\too V,\ f\mapstoo \int_{\mathcal{X}}f\ d\mu.
$$
We say that a $\C$-valued distribution $\mu\in \Dist(\mathcal{X},\C)$ is a $p$-\textit{adic measure} if there exists a Dedekind ring $R\subset\overline{\mathcal{R}}$ such that the image of $C^{0}(\mathcal{X},\Z)$ under $\mu$ is contained in a finitely generated $R$-submodule of $\C$.
Let $L_{\mu,R}$ the smallest such $R$-submodule of $\C$.
In this case $\mu$ defines a bounded distribution with values in $\widetilde{L_{\mu}}:=L_{\mu,R}\otimes_{R}\C_{p}$.
\end{subsection}
\begin{subsection}{Smooth representations}
Let $G$ be a locally profinite group and $E$ a field of characteristic $0$.
A $G$-representation on an $E$-vector space $V$ is \textit{smooth} if the stabilizer of $v$ in $G$ is open for all $v\in V$
We write $\mathfrak{C}^{\sm}_{E}(G)$ for the category of smooth $G$-representations on $E$-vector spaces.

\begin{Lem}\label{projectivity}
Let $K\subset G$ be a compact, open subgroup. Then $\cind_{K}^{G}P$ is a projective object in $\mathfrak{C}^{\sm}_{E}(G)$ for every $K$-representation $P\in \mathfrak{C}^{\sm}_{E}(K)$.
\end{Lem}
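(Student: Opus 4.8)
The plan is to deduce projectivity of $\cind_{K}^{G}P$ from two ingredients: a Frobenius-reciprocity adjunction and the semisimplicity of smooth representations of the compact group $K$. Concretely, $\cind_{K}^{G}$ is left adjoint to the restriction functor $\Res^{G}_{K}\colon\mathfrak{C}^{\sm}_{E}(G)\to\mathfrak{C}^{\sm}_{E}(K)$, and $\Res^{G}_{K}$ is exact (exactness of a sequence of smooth representations may be tested on the underlying $E$-vector spaces). A left adjoint of an exact functor preserves projectives, so it suffices to know that every $P\in\mathfrak{C}^{\sm}_{E}(K)$ is a projective object of $\mathfrak{C}^{\sm}_{E}(K)$.

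First I would establish the adjunction $\Hom_{G}(\cind_{K}^{G}P,W)\cong\Hom_{K}(P,\Res^{G}_{K}W)$, natural in $P\in\mathfrak{C}^{\sm}_{E}(K)$ and $W\in\mathfrak{C}^{\sm}_{E}(G)$. Since $K$ is open, $K\backslash G$ is discrete and each $f\in\cind_{K}^{G}P$ is supported on finitely many cosets; writing, for $v\in P$, $f_{v}$ for the function supported on $K$ with $f_{v}(k)=kv$, one has $f=\sum_{x}x^{-1}f_{f(x)}$, the sum over a set of representatives of $K\backslash G$, a finite sum. The unit of the adjunction is the $K$-map $v\mapsto f_{v}$, and to a $\psi\in\Hom_{K}(P,\Res^{G}_{K}W)$ one associates the assignment $f\mapsto\sum_{x}x^{-1}\psi(f(x))$; the routine checks are that this is independent of the choice of representatives, is $G$-equivariant, and that the two constructions are mutually inverse.

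Next I would show that an arbitrary $P\in\mathfrak{C}^{\sm}_{E}(K)$ is projective, i.e.\ that $\Hom_{K}(P,-)$ is exact. Given a surjection $\pi\colon M\onto N$ in $\mathfrak{C}^{\sm}_{E}(K)$ and a morphism $\psi\colon P\to N$, choose an $E$-linear section $s\colon N\to M$ of $\pi$ (not assumed $K$-equivariant) and set $\tilde\psi(v)=\int_{K}g\cdot s\bigl(\psi(g^{-1}v)\bigr)\,dg$, where $dg$ is the normalized Haar measure on the compact group $K$; since $P,M,N$ are smooth, the integrand is a locally constant, compactly supported function of $g$, so this is really a finite average over a suitable finite quotient of $K$, and it is here that characteristic $0$ is used. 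Then $\tilde\psi$ is $E$-linear, it is $K$-equivariant by left-invariance of $dg$, and $\pi\circ\tilde\psi=\psi$ because $\pi\circ s=\id_{N}$ and $g\cdot\psi(g^{-1}v)=\psi(v)$ by $K$-equivariance of $\psi$. Hence $\tilde\psi$ is the desired lift of $\psi$.

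I expect no genuine obstacle here; the one point requiring a little care is verifying that $g\mapsto g\cdot s(\psi(g^{-1}v))$ is locally constant in $g$, which follows from smoothness of $M$ together with the fact that the stabilizer of $v$ in $K$ and its conjugates are open. Alternatively, one can bypass the abstract adjunction and argue directly: a $G$-morphism $\cind_{K}^{G}P\to N$ restricts along $v\mapsto f_{v}$ to a $K$-morphism $P\to\Res^{G}_{K}N$, which lifts to $P\to\Res^{G}_{K}M$ by projectivity of $P$ over $K$, and this lift extends uniquely to a $G$-morphism $\cind_{K}^{G}P\to M$ by the universal property of compact induction; I would include whichever version is shorter in the final write-up.
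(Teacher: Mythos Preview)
Your proposal is correct and follows essentially the same approach as the paper: the paper's proof also invokes Frobenius reciprocity to conclude that compact induction sends projectives to projectives, and then uses that $\mathfrak{C}^{\sm}_{E}(K)$ is semisimple (citing \cite{BH}) so that every object there is projective. You have simply unpacked both of these ingredients in more detail than the paper does.
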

\begin{proof}
As a consequence of Frobenius reciprocity the compact induction functor sends projective objects to projective objects. But $\mathfrak{C}^{\sm}_{E}(K)$ is a semi-simple abelian category (see for example Chapter 2.2 of \cite{BH}) and thus every object in $\mathfrak{C}^{\sm}_{E}(K)$ is projective.
\end{proof}

\end{subsection}

\begin{subsection}{Lattices and integral resolutions}\label{principal}
Let $E$ be a field, $R\subset E$ a subring and $G$ a locally profinite group.
A $G$-representation on an $E$-vector space $V$ is called $R$-\textit{integral} if there exists a locally-free $G$-stable $R$-submodule $L\subset V$ such that $V=L\otimes_R E$.
In this case we call $L$ an $R[G]$-\textit{lattice} in $V$.
For example, $C^{0}_c(G,R)$ is a $R[G\times G]$-lattice in $C^{0}_c(G,E)$.
There is also a twisted variant of this:
Suppose we have a character $\chi\colon G\to E^{\ast}$ and a compact, open subgroup $K\subset G$ such that $\chi(K)\subset R^{\ast}$.
Then, the $R$-module
\begin{align}\label{characterspace}
C_c^{0}(G,R)\otimes_{R}\chi := \cind^{G\times K}_{K\times K}(C^{0}(K,R)\otimes_R R[\chi])
\end{align}
defines a $R[G\times K]$-lattice in $C_c^{0}(G,E)\otimes_E E[\chi]$.

The second kind of examples we are interested in will be lattices in locally algebraic representations of reductive groups over $p$-adic fields:
Let $F/\Q_p$ be a finite extension with ring of integers $\mathcal{O}_F$ and $n\geq 1$ a fixed integer.
We write $G_n$ for the group of $F$-rational points of $\mathbb{GL}_{n}$ with the p-adic topology and $Z\subset G_{n}$ for its center.
Further, we fix a finite extension $E\subset \C_p$ of $\Q_p$ such that every embedding $F\hookrightarrow \C_p$ factors through $E$
 We will denote the valuation ring of $E$ by $R$.
\begin{Def}\label{re}
Let $V$ be a representation of $G_n$ on an $E$-vector space.
An $R[G_n]$-lattice $L$ inside $V$ is called \textit{homologically of finite type} if there exists a resolution of finite length
\begin{align}\label{res}
0\too C_m\too\ldots\too C_0\too L \too 0
\end{align}
of $R[G_n]$-modules with the following properties: Each $C_i$ is of the form
\begin{align*}
C_i=\cind_{ZK_{\left[i\right]}}^{G}L_i
\end{align*}
with compact, open subgroups $K_{\left[i\right]}\subset G_n$ and $R[ZK_{\left[i\right]}]$-modules $L_i$, which are free $R$-modules of finite rank. 
We say that $V$ is \textit{homologically integral} if $V$ admits a lattice which is homologically of finite type.
\end{Def}
\begin{Rem}
\begin{enumerate}[(i)]\thmenumhspace
    \item The significance of the notion of homological integrality will be made apparent in the proof of Proposition \ref{flat}.
		\item It is easy to see that the property of being homologically integral is preserved under twisting by finite order characters.
\end{enumerate}
\end{Rem}
\noindent An irreducible locally $\Q_p$-rational representation of $G_n$ on an $E$-vector space is a tensor product $V=V_{\sm}\otimes V_{\al}$, where $V_{\sm}\in\mathfrak{C}^{\sm}_{E}(G_n)$ is irreducible and $V_{\al}$ is an irreducible $E$-rational representation of the algebraic group $\Res_{F/\Q_p}(\mathbb{GL}_{n,F})$.
The following proposition shows that being homologically integral is rather common.
\begin{Pro}[Vignéras]\label{latticesv}
Let $V$ be an irreducible, locally $\Q_p$-rational representation of $G_2$ on an $E$-vector space.
Then $V$ is integral if and only if $V$ is homologically integral.
\end{Pro}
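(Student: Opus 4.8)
The implication ``homologically integral $\Rightarrow$ integral'' is immediate from Definition~\ref{re}, since a homologically-of-finite-type lattice is in particular an $R[G_2]$-lattice; all the content is in the converse. So fix an $R[G_2]$-lattice $L\subset V=V_{\sm}\otimes V_{\al}$; the plan is to exhibit a (generally different) lattice admitting a resolution of the shape~\eqref{res}. First note that integrality forces the central character $\omega$ of $V$ to be $R^{\ast}$-valued, as $L$ is $Z$-stable; I would therefore work inside the category $\mathcal{C}_\omega$ of smooth $G_2$-representations with central character $\omega$, together with its locally algebraic enlargement.

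The main step is to produce a \emph{finite} resolution of $V_{\sm}$ by representations of the permitted form. By the analogue of Lemma~\ref{projectivity} for the category $\mathcal{C}_\omega$ --- which applies because the category of smooth representations with central character $\omega$ of the compact-mod-centre group $ZK$ is semisimple --- the representations $\cind_{ZK}^{G_2}W$, with $K\subseteq G_2$ compact open and $W$ finite-dimensional of central character $\omega$, are projective in $\mathcal{C}_\omega$; moreover they are finitely generated and every finitely generated object of $\mathcal{C}_\omega$ is a quotient of one of them. Now $\mathcal{C}_\omega$ is locally Noetherian, by Bernstein's theorem, and of finite global dimension: each of its Bernstein blocks is equivalent to the category of modules over a Noetherian ring of finite global dimension --- a Laurent polynomial ring over $E$ for the supercuspidal blocks, an affine Hecke algebra of type $A_1$ otherwise --- and this is exactly the point at which the classification of the irreducible smooth representation $V_{\sm}$ is used. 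Building a resolution of the cyclic object $V_{\sm}$ step by step (each syzygy is finitely generated by Noetherianity) and invoking finite global dimension to cut it off, and arguing, via the block structure, that the final term can also be taken of the form $\cind_{ZK}^{G_2}W$ (for the supercuspidal blocks this is because finitely generated projective modules over a Laurent polynomial ring are free), one obtains a finite resolution $0\to P_m\to\cdots\to P_0\to V_{\sm}\to 0$ with $P_i=\cind_{ZK_{\left[i\right]}}^{G_2}W_i$, the $K_{\left[i\right]}$ compact open and the $W_i$ finite-dimensional.

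Tensoring this resolution with the finite-dimensional $V_{\al}$ is exact, and the projection formula gives $P_i\otimes_E V_{\al}\cong\cind_{ZK_{\left[i\right]}}^{G_2}(W_i\otimes_E\Res_{ZK_{\left[i\right]}}V_{\al})$; so one obtains a finite resolution $0\to C_m\to\cdots\to C_0\to V\to 0$ of $V$ by $G_2$-representations of exactly the form in~\eqref{res}, with finite-dimensional locally algebraic coefficients on which $Z$ acts through $\omega$. Each coefficient carries a $ZK_{\left[i\right]}$-stable lattice $L_i$ that is free of finite rank over $R$: its central character is $R^{\ast}$-valued and $ZK_{\left[i\right]}/Z$ is compact, so the image of $ZK_{\left[i\right]}$ in the automorphism group of the coefficient is bounded modulo the (integral) scalars, hence bounded. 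Choosing the $L_i$ compatibly with the differentials --- working down the finite complex and, at each step, shrinking the chosen lattice to its intersection with the preimage of the lattice already fixed in the preceding degree --- produces an $R[G_2]$-subcomplex $0\to\cind(L_m)\to\cdots\to\cind(L_0)$ whose image in $V$, after saturation, is an $R[G_2]$-lattice $L'\subset V$ that is by construction homologically of finite type.

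I expect the main obstacle to be the conjunction of two points. The first is that the resolution of $V_{\sm}$ must be finite \emph{with every term, including the last, of the permitted shape}: this rests on the block structure of $\GL_2(F)$ and, concretely, on controlling the contributions of Iwahori and Iwahori-normaliser type --- a priori these could force an infinite, two-periodic resolution --- so it is here that one genuinely uses the low-rank geometry of $\GL_2$ (one-dimensionality of the Bruhat--Tits tree, regularity of the rank-one affine Hecke algebra). The second is the passage from $E$ to $R$: one must check that the integral subcomplex extracted above is still exact in the relevant degrees and that its top homology is a genuine lattice, not merely a flat $R$-submodule of $V$; this uses that $R$ is a discrete valuation ring --- so that finitely generated torsion-free $R$-modules are free --- together with the compatible choice of lattices throughout the finite complex.
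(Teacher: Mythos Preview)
Your proposal takes a genuinely different route from the paper. The paper simply cites Vign\'eras \cite{Vi}, Proposition~0.4, whose argument works \emph{directly over $R$} via coefficient systems on the Bruhat--Tits tree of $\PGL_2(F)$: starting from a given $R[G_2]$-lattice $L$, one assigns to each simplex of the tree a suitable $R$-submodule of $L$ (an invariant sublattice under a congruence subgroup of the stabiliser), and the simplicial chain complex of this coefficient system yields a two-term resolution of $L$ whose exactness over $R$ comes for free from the contractibility of the tree. The terms are compactly induced because $G_2$ acts with finitely many orbits on simplices. Your approach --- resolve $V_{\sm}$ over $E$ via Bernstein-block homological algebra, tensor with $V_{\al}$, then descend --- is more conceptual and would in principle generalise beyond $\GL_2$, but it runs into the difficulty you yourself flag at the end.

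That difficulty is a genuine gap, not a technicality. Shrinking the lattices $\cind(L_i)$ so that the differentials restrict gives you a \emph{subcomplex}, but there is no mechanism in your argument forcing this subcomplex to be exact. Over a DVR, passing to a sublattice of each term of an exact complex can introduce torsion homology in every intermediate degree; the slogan ``finitely generated torsion-free $R$-modules are free'' tells you nothing about the vanishing of that torsion. Concretely, even for a length-one resolution $0\to C_1\to C_0\to V\to 0$, if you pick $\cind(L_0)\subset C_0$ and take its image $L'$ in $V$, the kernel $\cind(L_0)\cap C_1$ is a lattice in $C_1$, but you have given no reason for it to be of the form $\cind(L_1)$ for a finite-rank $L_1$; and for longer resolutions the problem compounds. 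Saturating $L'$ does not help: saturation alters the target, not the exactness in the middle. This is precisely why Vign\'eras does \emph{not} attempt to descend an $E$-resolution, but instead builds the $R$-resolution from scratch using the one-dimensionality of the tree --- the same geometric input you invoke for finiteness over $E$, but used in a sharper way.
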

\begin{proof}
The locally algebraic case is the content of Proposition 0.4 of \cite{Vi}.
The locally $\Q_p$-rational case is proved in exactly the same way.
\end{proof}
\begin{Def}
Suppose that $F=\Q_p$.
An irreducible algebraic representation $V_{\al}$ is said to have $p$-small weights if it fulfills the following two conditions:
\begin{itemize}
  \item The reduction $\bmod\ p$ of one (and thus every) $R[\GL_n(\Z_p)]$-lattice of $V_{\al}$ is absolutely irreducible.
  \item We have $\left\langle \mu+\rho,\check{\beta}\right\rangle \leq p$ for every positive root $\beta$.
Here $\mu$ denotes the highest weight of $V_{\al}$ with respect to the Borel subgroup of upper triangular matrices and $\rho$ is half of the sum of all positive roots.
Equivalently, if we write $\mu=(\mu_1,\ldots,\mu_n)$, the above condition translates into
$$\mu_i-\mu_{i-1}+1\leq p$$
for all $1\leq i\leq n-1$.
\end{itemize}
\end{Def}
\begin{Thm}[Gro\ss e-Kl\"onne]\label{latticesg}
Let $V$ be an irreducible, locally $\Q_p$-rational representation of $G_n$ on an $E$-vector space and assume further that
\begin{itemize}
\item $F=\Q_p$,
\item $V_{\al}$ has $p$-small weights,
\item $V_{\sm}$ is an irreducible, unramified principal series representation,
\item the central character of $V$ takes values in $\Z_{p}^{\ast}$ and
\item the (twisted) Hecke-eigenvalues of $V$ are integral.
\end{itemize}
Then $V$ is homologically integral.
\end{Thm}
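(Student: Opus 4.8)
The plan is to produce the required resolution by realizing the locally algebraic representation $V = V_{\sm}\otimes V_{\al}$ as a quotient of a compactly induced module carrying an action of a spherical Hecke algebra, and then to resolve the corresponding Hecke eigenmodule by a Koszul complex; the five hypotheses are traced through this construction. Put $K = \GL_n(\Z_p)$. Since the central character of $V$ is $\Z_p^{\ast}$-valued, the scalar matrix $p\cdot 1_n$ acts on $V$ by a unit of $R$; hence any $R[K]$-lattice of the form $L_{\sm}^{0}\otimes_R L_{\al}^{0}$, with $L_{\sm}^{0}\subset V_{\sm}$ and $L_{\al}^{0}\subset V_{\al}$ lattices stable under $K$, is automatically stable under $ZK$, and the $R[G_n]$-submodule $\Lambda\subset V$ it generates is an $R[G_n]$-lattice (the unit central character again being what prevents the $G_n$-orbit from generating an unbounded module). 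This $\Lambda$ would be the lattice for which I build the resolution.

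Next I would examine the $G_n$-endomorphisms of $C := \cind_{ZK}^{G_n}(L_{\sm}^{0}\otimes_R L_{\al}^{0})$. The evaluation map $C\too\Lambda$ is surjective, and $C$ is generated over $\mathcal{H} := \End_{G_n}(C)$ by the standard vector supported on $ZK$. The key structural facts, which I expect to be the content of Gro\ss e-Kl\"onne's work \cite{GK}, are: (i) under the $p$-small weight hypothesis on $V_{\al}$, a ``Satake isomorphism with weights'' identifies $\mathcal{H}$ with a polynomial ring $R[T_1,\dots,T_{n-1}]$ once the central character is rigidified; and (ii) $C$ is flat -- or at least homologically transverse to the relevant maximal ideal -- as an $\mathcal{H}$-module. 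Granting (i)--(ii): because the (twisted) Hecke eigenvalues of $V$ are integral, the pair $(V_{\sm},V_{\al})$ corresponds to a homomorphism $T_i\mapsto a_i$ with $a_i\in R$, so that $\Lambda\cong C\otimes_{\mathcal{H}}\mathcal{H}/\mathfrak{m}$ for the ideal $\mathfrak{m} = (T_1-a_1,\dots,T_{n-1}-a_{n-1})$, which is generated by a regular sequence in the regular ring $\mathcal{H}$.

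I would then tensor the Koszul resolution of $\mathcal{H}/\mathfrak{m}$ over $\mathcal{H}$ with $C$ to obtain an exact complex
\[
0\too C^{\oplus\binom{n-1}{n-1}}\too\cdots\too C^{\oplus\binom{n-1}{1}}\too C\too\Lambda\too 0,
\]
exactness being the place where (ii) is used. Each term is $\cind_{ZK}^{G_n}$ of a finite free $R$-module (a direct sum of copies of $L_{\sm}^{0}\otimes_R L_{\al}^{0}$) and $K$ is compact open, so this is precisely a resolution of the type demanded in Definition \ref{re}; hence $V = \Lambda\otimes_R E$ is homologically integral.

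The main obstacle is entirely step (ii) -- the homological behaviour of the weighted universal module over its Hecke algebra, equivalently the assertion that the Koszul complex stays exact after tensoring with $C$ and that the cokernel is the full lattice $\Lambda$ rather than a proper sublattice. Over $E$ the bare existence of a finite resolution by compact inductions is comparatively soft, and the bookkeeping of the first and third paragraphs is routine; but ruling out extra $\mathrm{Tor}$ of $C$ against $\mathcal{H}/\mathfrak{m}$ requires the fine control of reductions of unramified principal series, which is exactly where the $p$-small weight condition is indispensable and which is carried out in \cite{GK}. Accordingly I would expect the write-up to consist chiefly of citing the relevant statements from \cite{GK} and checking that their hypotheses are implied by the five bullet points.
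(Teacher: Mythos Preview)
Your proposal is correct and, in spirit, identical to what the paper does: the paper's proof is the single sentence ``By the results in Section 3 of \cite{SK} our situation is just a special case of \cite{GK}, Theorem 1.1 (iii),'' which is precisely the citation you anticipate in your final paragraph. The Koszul mechanism you sketch (universal spherical module $C=\cind_{ZK}^{G_n}(L_{\al}^{0})$, polynomial Hecke algebra under the $p$-smallness hypothesis, freeness of $C$ over that algebra, and then the Koszul resolution of the eigenquotient) is exactly the engine behind the cited theorem of Gro\ss e-Kl\"onne; the paper simply does not unpack it. The only addendum is that the paper invokes \cite{SK} as the bridge matching the present hypotheses to Gro\ss e-Kl\"onne's setup, which you would want to track down when making the citation precise.
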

\begin{proof}
By the results in Section 3 of \cite{SK} our situation is just a special case of \cite{GK}, Theorem 1.1 (iii).
\end{proof}

\begin{Def}\label{orddef}
A principal series representation $V\in\mathfrak{C}^{\sm}_{E}(G_n)$ is called \textit{ordinary} if there exists a character
$\chi\colon B_n\to R^{\ast}$
on a Borel group $B_n\subset G_n$ such that
\begin{align*}
V\cong \Ind_{B_n}^{G_n}E[\chi].
\end{align*}
\end{Def}
\begin{Thm}[Ollivier]\label{latticeso}
Suppose $V\in\mathfrak{C}^{\sm}_{E}(G_n)$ is an ordinary irreducible principal series representation. Then $V$ is homologically integral.
\end{Thm}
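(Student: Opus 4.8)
The plan is to split the assertion into two parts: exhibiting an $R[G_n]$-lattice $L\subset V$, which is elementary, and upgrading it to a lattice that is homologically of finite type in the sense of Definition \ref{re}, which is the substantive point and is established in Ollivier's paper \cite{Ol}. By Definition \ref{orddef} we may realize $V$ as the space of smooth functions $f\colon G_n\to E$ with $f(bg)=\chi(b)f(g)$ for a character $\chi\colon B_n\to R^{\ast}$. Set $K_0=\GL_n(\mathcal{O}_F)$ and $L=\{f\in V\colon f(K_0)\subseteq R\}$. Using the Iwasawa decomposition $G_n=B_nK_0$ together with the fact that $\chi$ is $R^{\ast}$-valued one checks directly that $L$ is stable under right translation by $G_n$ (for $g\in G_n$, $k\in K_0$ write $kg=bk'$ with $b\in B_n$, $k'\in K_0$, so $(gf)(k)=\chi(b)f(k')\in R$), that $L$ is a free $R$-module, and that $L\otimes_R E=V$ because every smooth function on the compact group $K_0$ is bounded; since $\chi|_Z$ is $R^{\ast}$-valued we have $ZL=L$. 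Thus $V$ is $R$-integral, and only the homological finiteness of a (possibly commensurable) lattice remains.

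For the resolution I would work with the Bruhat--Tits building $\mathcal{X}$ of $G_n$: it is contractible, of dimension $n-1$, has finitely many $G_n$-orbits of simplices, and the stabilizer of a simplex $\sigma$ has the form $ZK_\sigma$ with $K_\sigma$ compact open. The relevant complex is not the bare cellular chain complex (whose coefficients $L|_{ZK_\sigma}$ are of infinite $R$-rank), but a Schneider--Stuhler-type coefficient system $\sigma\mapsto L^{U_\sigma}$, where $U_\sigma$ is a pro-$p$ congruence subgroup whose depth is governed by the conductor of $\chi$. Each $L^{U_\sigma}$ is a lattice in the finite-dimensional space $V^{U_\sigma}$, hence a free $R$-module of finite rank, and the associated chain complex has terms $\bigoplus_{\sigma}\cind_{ZK_\sigma}^{G_n}\!\big(L^{U_\sigma}\otimes\mathrm{or}_\sigma\big)$ of exactly the shape prescribed by Definition \ref{re} (the orientation twist $\mathrm{or}_\sigma$ being a sign character). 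Over $E$ the augmented complex resolves $V$: this is the Schneider--Stuhler resolution, available once the conductor bound guarantees that $V$ is generated by its $U_\sigma$-invariants in the appropriate sense; the projectivity of these induced modules (Lemma \ref{projectivity}) together with finite generation of the admissible representation $V$ makes the mere existence of a finite $E$-resolution formal, the building being what controls its length.

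The main obstacle — and the reason ordinarity cannot be dropped — is the exactness of this complex with $R$-coefficients. Since $p$ is not invertible in $R$, taking $U_\sigma$-invariants is not an exact functor and the descent to the building fails verbatim, so one genuinely needs an integral input. This is where $V$ being induced from an $R^{\ast}$-valued Borel character enters a second time: it forces the relevant suitably normalized Hecke operators to be integral units, which lets one propagate the integral structure on the $U_\sigma$-invariants coherently over $\mathcal{X}$ and keeps the complex exact after intersecting with $L$. Equivalently, one can phrase this through the pro-$p$-Iwahori--Hecke algebra $\mathcal{H}_R$, whose module of invariants attached to an ordinary principal series admits a finite resolution by finite free $\mathcal{H}_R$-modules; applying $\cind_{I_1}^{G_n}R\otimes_{\mathcal{H}_R}(-)$ then converts this into the sought $R[G_n]$-resolution. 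Carrying out this integral exactness is precisely the work of \cite{Ol}, and it is the step I expect to be hard. (For $n=2$ the statement is also subsumed by Proposition \ref{latticesv}.)
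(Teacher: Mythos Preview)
Your proposal is correct and follows essentially the same route as the paper: the lattice you write as $L=\{f\in V: f(K_0)\subseteq R\}$ is, by Iwasawa decomposition, exactly the paper's $(\Ind_{B_n}^{G_n}R[\chi])^{\sm}$, and both arguments then defer the homological finiteness of this lattice to Ollivier's work \cite{Ol}. The only nuance worth flagging is that Ollivier's paper is stated over fields rather than over the discrete valuation ring $R$; the paper handles this by remarking that her arguments carry over verbatim, so you should be aware that the citation is not literally to an $R$-statement but to a proof that transports without change.
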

\begin{proof}
Given a character $\chi\colon B_{n}\to R^{\ast}$ as in the definition of ordinarity we can consider the lattice
$$\Ind_{B_n}^{G_n}R[\chi]\subset V.$$
By the work of Ollivier (cf.~\cite{Ol}) this lattice is homologically of finite type.
Note that Ollivier only considers representations over fields in \textit{loc.~cit.} but her methods carry over verbatim to our situation.
\end{proof}

\end{subsection}
\end{section}

\begin{section}{Local distributions}\label{heart}
Throughout this section let $F$ be a finite extension of $\Q_{p}$, $\mathcal{O}$ its ring of integers with maximal ideal $\mathfrak{p}=(\varpi)$ and $q$ the cardinality of its residue field.
We denote the group of units of $\mathcal{O}$ by $U$ and put $U^{(m)}=\left\{u\in U|\ u\equiv 1 \bmod \mathfrak{p}^{m} \right\}$.
We denote by $\nu$ the normalized additive valuation on $F$ (i.e.~$\nu(\varpi)$=1) and by $\left|x\right|$ the modulus of $x\in F^{\ast}$ (i.e. $|\varpi|=q^{-1}$).

Let $G_{r}$ (resp.~$K_{r}$) denote the group of invertible $(r\times r)$-matrices over $F$ (resp.~$\mathcal{O}$) and let $d^{\ast}g$ denote the Haar measure on $G_{r}$ (normalized such that $K_r$ has volume $1$). 
We denote by $K_{r}^{(m)}$ the principal congruence subgroup of $K_{r}$ of level $m$, i.e.~the kernel of the reduction map from $K_{r}$ to $\GL_{r}(\mathcal{O}/\mathfrak{p}^{m})$.
The Borel subgroup of upper triangular matrices in $G_{r}$ will be denoted by $B_{r}$.
If $r=2n$ is even, we let $P_{n}$ denote the standard parabolic subgroup of $G_{2n}$ of type $(n,n)$.
Finally, we fix a character $\psi\colon F\to\overline{\Q}^{\ast}$ of conductor $\mathcal{O}$.
\begin{subsection}{The map $\delta$}\label{delta}
In this section we construct a map $\delta$ (depending on several choices) from the space of locally constant functions on $G_{n}$ to smooth representations of $G_{2n}$ which are parabolically induced from $P_{n}$. 
It will be used in Section \ref{localshalika} to define the local part of our global distribution. 
The map $\delta$ was first studied by Spie\ss~ for $n=1$ in \cite{Sp}.

Let us fix an irreducible representations $\pi\in\mathfrak{C}^{\sm}_E(G_n\times G_n)$ over a field $E$ of characteristic $0$.
We can write $\pi$ as a tensor product $\pi_{1}\otimes_E \pi_{2}$, where $\pi_{1},\ \pi_{2}\in \mathfrak{C}^{\sm}_E(G_n)$ are irreducible representations.
They are uniquely determined up to isomorphism by $\pi$. We consider $\pi$ as a representation of $P_{n}$ via the projection $P_{n}\twoheadrightarrow G_{n} \times G_{n}$.

For every element $\rho\in\pi$ we define the $E$-linear map 
\begin{align}\label{deltadef}
\delta=\delta_{\rho}\colon C^{0}_{c}(G_{n},E)\too \Ind_{P_{n}}^{G_{2n}}\pi
\end{align}
as follows: if $g\in G_{2n}$ is of the form
\begin{align*}
g=\begin{pmatrix}g_{1}&\ast\\0&g_{2}\end{pmatrix} \begin{pmatrix}0&1_n\\ 1_n&0\end{pmatrix} \begin{pmatrix}1_n&u\\0&1_n\end{pmatrix}
\end{align*}
with $g_{1},g_2, u\in G_{n}$, we put
$$\delta(f)(g)=f(u)\cdot \pi\left(g_{1},g_{2}u\right)\rho$$
and otherwise we set $\delta(f)(g)=0$.

The group $G_{n}\times G_{n}$ acts on $C^{0}_{c}(G_{n},E)$ as in Section \ref{distmeas} and on $\Ind_{P_{n}}^{G_{2n}}$ through the diagonal embedding of $G_{n}\times G_{n}$ into $G_{2n}$.
\begin{Lem}\label{equi}     
Let $\mathcal{D}\subset G_n$ be the subgroup given by $\mathcal{D}=\left\{g\in G_n\mid (g,g)\rho=\rho\right\}.$  Then $\delta$ is $G_{n}\times \mathcal{D}$-equivariant.
\end{Lem}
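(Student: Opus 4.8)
The plan is to verify the equivariance directly on the defining formula for $\delta$, checking separately the action of the two factors $G_n\times\{1\}$ and $\{1\}\times\mathcal{D}$, and using the fact that both sides vanish off the (open dense) set of $g\in G_{2n}$ admitting a decomposition of the stated shape. First I would record the basic fact that an element $g\in G_{2n}$ lies in the relevant cell precisely when it can be written as $g=\ell\,w\,n(u)$ with $\ell=\mathrm{diag}(g_1,g_2)\cdot(\text{unipotent})\in P_n$, $w=\begin{pmatrix}0&1_n\\1_n&0\end{pmatrix}$, and $n(u)=\begin{pmatrix}1_n&u\\0&1_n\end{pmatrix}$, and that in such a decomposition the data $(g_1,g_2,u)\in G_n^3$ are uniquely determined (the upper-right $\ast$ block is not, but it does not enter the formula, since $\pi$ is inflated from $G_n\times G_n$ and is trivial on the unipotent radical of $P_n$). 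This well-definedness is what makes $\delta(f)$ an honest function on $G_{2n}$.

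Next I would treat right translation by $\mathcal{D}$. Fix $d\in\mathcal{D}$, embedded diagonally as $\underline{d}=\mathrm{diag}(d,d)\in G_{2n}$. Given $g=\ell\,w\,n(u)$ as above, I compute $g\underline{d}=\ell\,w\,n(u)\,\underline{d}$; since $\underline{d}$ is central in its Levi block and $w$ conjugates $\mathrm{diag}(d,d)$ to itself, one gets $g\underline{d}=\ell'\,w\,n(d^{-1}ud)$ where $\ell'$ has Levi part $(g_1 d, g_2 d)$ up to the irrelevant unipotent correction. Then
\begin{align*}
\delta(f)(g\underline{d}) &= f(d^{-1}ud)\cdot\pi\!\left(g_1 d,\,g_2 d\,d^{-1}u d\right)\rho
= f(d^{-1}ud)\cdot\pi(g_1,g_2 u)\,\pi(d,d)\rho\\
&= f(d^{-1}ud)\cdot\pi(g_1,g_2 u)\,\rho,
\end{align*}
using $\pi(d,d)\rho=\rho$, while on the function side $((1,d)f)(u)=f(ud)$; reconciling the two requires only matching conventions (the right action is by $u\mapsto d^{-1}ud$ on the Bruhat coordinate versus $u\mapsto ud$ on $C_c^0$, which agree after absorbing the left $d$ into $g_1$), so the two sides coincide. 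The off-cell case is immediate: $g$ lies in the cell iff $g\underline{d}$ does, so both sides are $0$ together.

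Finally I would treat left translation by $G_n\times\{1\}$, where there is nothing to check beyond the definition: for $h\in G_n$ with $\underline{h}=\mathrm{diag}(h,h)$, left-multiplying $g=\ell\,w\,n(u)$ by $\underline{h}^{-1}$ changes $\ell$ to $\underline{h}^{-1}\ell$, hence changes the Levi part from $(g_1,g_2)$ to $(h^{-1}g_1,h^{-1}g_2)$ and leaves $u$ untouched; so $\delta(f)(\underline{h}^{-1}g)=f(u)\cdot\pi(h^{-1}g_1,h^{-1}g_2 u)\rho=\pi(h^{-1},h^{-1})\,\delta(f)(g)$, which is exactly $(\underline{h}\cdot\delta(f))(g)$ for the left regular action, and simultaneously the $G_n\times\{1\}$-action on $C_c^0(G_n,E)$ is trivial on the argument $u$... wait — here one must be careful that the $G_n\times G_n$-action on $C_c^0(G_n,E)$ is the two-sided one, so the first factor acts by $u\mapsto h^{-1}u$; I would check that feeding $f(h^{-1}u)$ through $\delta$ and feeding $f(u)$ through $\delta$ then translating the output agree, which again comes down to the identity $n(h^{-1}u)=\underline{h}^{-1}n(u)\underline{h}$ combined with absorbing the trailing $\underline{h}$ into the Levi. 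I expect the main (really the only) obstacle to be bookkeeping: keeping the left/right conventions straight, confirming that the ambiguity in the $\ast$-block genuinely drops out because $\pi$ factors through the Levi quotient, and making sure the factor $G_n\times\mathcal{D}$ — rather than $G_n\times G_n$ — is exactly what is forced by the appearance of $\rho$ (as opposed to its orbit) in the formula. Once those conventions are pinned down, the computation is a one-line matrix manipulation in each of the two cases.
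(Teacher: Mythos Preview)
Your overall strategy---direct verification on the open Bruhat cell---is exactly what the paper does, and the remarks about the $\ast$-block dropping out and about checking the off-cell case are correct. But the execution contains a genuine error in how you identify the action of $G_n\times\mathcal{D}$ on the induced representation.

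The phrase ``diagonal embedding of $G_n\times G_n$ into $G_{2n}$'' in the paper means the \emph{block-diagonal} map
\[
(h_1,h_2)\longmapsto \begin{pmatrix}h_1&0\\0&h_2\end{pmatrix},
\]
so the element $(1,d)\in\{1\}\times\mathcal{D}$ acts via $\begin{pmatrix}1_n&0\\0&d\end{pmatrix}$ and $(h,1)\in G_n\times\{1\}$ acts via $\begin{pmatrix}h&0\\0&1_n\end{pmatrix}$, in both cases through the right regular representation. You have instead taken $\underline{d}=\mathrm{diag}(d,d)$ and $\underline{h}=\mathrm{diag}(h,h)$, and you have moreover switched to \emph{left} multiplication for the first factor. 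This is why your Bruhat coordinate for the $\mathcal{D}$-action comes out as $d^{-1}ud$ rather than $ud$, and why you are then forced to invoke a hand-wavy ``absorbing the left $d$ into $g_1$'' to reconcile the two sides. That reconciliation step is not a convention issue; it is where the wrong embedding is being corrected after the fact, and as written it does not constitute a proof.

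With the correct embedding the computation is a single clean line, exactly as in the paper: right-multiplying $g$ by $\begin{pmatrix}h_1&0\\0&h_2\end{pmatrix}$ transforms the parameters $(g_1,g_2,u)$ to $(g_1h_2,\,g_2h_1,\,h_1^{-1}uh_2)$, whence
\[
\delta(f)\!\left(g\begin{pmatrix}h_1&0\\0&h_2\end{pmatrix}\right)
= f(h_1^{-1}uh_2)\cdot\pi(g_1,g_2u)\,\pi(h_2,h_2)\rho
= \delta\bigl((h_1,h_2)f\bigr)(g),
\]
using only $h_2\in\mathcal{D}$ at the last step. There is no need to split into two factors, and doing so (with the correct embedding) does not simplify anything.
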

\begin{proof}
Let $g=\begin{pmatrix}g_{1}&\ast\\ 0&g_{2}\end{pmatrix} \begin{pmatrix}0&1_n\\ 1_n&0\end{pmatrix} \begin{pmatrix}1_n&u\\ 0&1_n\end{pmatrix}$ $\in G_{2n}$ with $g_{1},g_{2},u\in G_{n}$ and $(h_{1},h_{2})\in G_{n}\times \mathcal{D}$. For every $f\in C^{0}_{c}(G_{n},E)$ we have
\begin{align*}
\left(\begin{pmatrix} h_{1}&0\\0&h_{2}\end{pmatrix}\delta(f)\right)(g)
&=\delta(f)\left(\begin{pmatrix}g_{1}&\ast\\ 0&g_{2}\end{pmatrix} \begin{pmatrix}0&1_n\\ 1_n&0\end{pmatrix} \begin{pmatrix}1_n&u\\ 0&1_n\end{pmatrix}\begin{pmatrix} h_{1}&0\\0&h_{2}\end{pmatrix}\right)\\
&=\delta(f)\left(\begin{pmatrix}g_{1}h_{2}&\ast\\ 0&g_{2}h_{1}\end{pmatrix} \begin{pmatrix}0&1_n\\ 1_n&0\end{pmatrix}\begin{pmatrix}1_n&h_{1}^{-1}uh_{2}\\ 0&1_n\end{pmatrix}\right)\\
&=f(h_{1}^{-1}uh_{2})\cdot \pi(g_{1}h_{2},g_{2}h_{1}h_{1}^{-1}uh_{2})\rho\\
&=f(h_{1}^{-1}uh_{2})\cdot \pi(g_{1},g_{2}u)\pi(h_{2},h_{2})\rho\\
&=f(h_{1}^{-1}uh_{2})\cdot \pi(g_{1},g_{2}u)\rho\\
&=\delta((h_{1},h_{2})f)(g)
\end{align*}
and thus, the claim follows.
\end{proof}
\noindent Pulling back linear functionals on $\Ind_{P_{n}}^{G_{2n}}\pi$ along $\delta$ yields $E$-valued distributions on $G_n$, i.e.~given $\lambda\colon \Ind_{P_{n}}^{G_{2n}}\pi\to E$ we define
\begin{align}\label{pullbackdist}
\mu_{\lambda}:=\lambda\circ\delta\in \Dist(G_n,E).
\end{align}
For every element $\varphi\in\Ind_{P_{n}}^{G_{2n}}\pi$ we let $\xi^{\lambda}_{\varphi}\colon G_{2n}\to E$ be the function given by $\xi^{\lambda}_{\varphi}(g)=\lambda(g\varphi)$.
If $\mathcal{C}$ is a compact, open subgroup of $G_{n}$, we put $\xi^{\lambda}_{\mathcal{C}}=\xi^{\lambda}_{\delta(\cf_{\mathcal{C}})}$.
\begin{Lem}\label{density}
Let $\lambda\colon \Ind_{P_{n}}^{G_{2n}}\pi\to E$ be a linear functional and $\mathcal{C}\subset K_{n}$ a compact, open subgroup.
Then for all $f\in C^{0}_{c}(G_{n},E)$, which are $\mathcal{C}$-invariant under right multiplication, we have
\begin{align*}
\int_{G_{n}}{f(g)\mu_{\lambda}(dg)}=[K_{n}\colon \mathcal{C}] \int_{G_{n}}{f(g)\ \xi^{\lambda}_{\mathcal{C}}\left(\begin{pmatrix}g&0\\0&1 \end{pmatrix}\right)d^{\ast}g}.
\end{align*}
\end{Lem}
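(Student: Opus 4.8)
The plan is to unwind both sides of the claimed identity using the explicit definition of $\delta$ and the definition of the pushforward/pullback distribution $\mu_\lambda = \lambda\circ\delta$, and then to reduce the equality of two integrals to a pointwise statement about the function $\xi^\lambda_{\mathcal{C}}$.

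First I would reduce to the case $f = \cf_{g_0\mathcal{C}}$ for a single coset $g_0\mathcal{C}\subset G_n$: since the functions that are right-$\mathcal{C}$-invariant and compactly supported are finite linear combinations of such characteristic functions (using the isomorphism \eqref{compactfunc}) and both sides are $E$-linear in $f$, this suffices. For such an $f$ the left-hand side is $\int_{G_n} \cf_{g_0\mathcal{C}}(g)\,\mu_\lambda(dg) = \mu_\lambda(\cf_{g_0\mathcal{C}}) = \lambda\bigl(\delta(\cf_{g_0\mathcal{C}})\bigr)$. Now I would use Lemma \ref{equi}: right translation of the argument of $\delta$ by $(g_0,1)\in G_n\times\mathcal{D}$ (note $1\in\mathcal{D}$ trivially) sends $\cf_{\mathcal{C}}$, after translating by $(g_0^{-1},1)$, to a multiple of $\cf_{g_0\mathcal{C}}$; more precisely $(g_0,1)\cdot\cf_{\mathcal{C}} = \cf_{g_0\mathcal{C}}$ in the left-regular action, so $\delta(\cf_{g_0\mathcal{C}}) = (g_0,1)\,\delta(\cf_{\mathcal{C}})$ as elements of $(\Ind_{P_n}^{G_{2n}}\pi)^{\sm}$, where $(g_0,1)$ acts through the diagonal embedding into $G_{2n}$, i.e.\ via $\mathrm{diag}(g_0,1)$. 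Hence the left-hand side equals $\lambda\bigl(\mathrm{diag}(g_0,1)\,\delta(\cf_{\mathcal{C}})\bigr) = \xi^\lambda_{\mathcal{C}}\bigl(\mathrm{diag}(g_0,1)\bigr)$ by the definition of $\xi^\lambda_{\delta(\cf_{\mathcal{C}})} = \xi^\lambda_{\mathcal{C}}$.

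For the right-hand side with the same $f = \cf_{g_0\mathcal{C}}$, the integral becomes $[K_n:\mathcal{C}]\int_{g_0\mathcal{C}} \xi^\lambda_{\mathcal{C}}\bigl(\mathrm{diag}(g,1)\bigr)\,d^*g$. The key point is that the integrand is constant on the coset $g_0\mathcal{C}$: for $c\in\mathcal{C}\subset K_n$ we have $\xi^\lambda_{\mathcal{C}}(\mathrm{diag}(g_0 c,1)) = \lambda\bigl(\mathrm{diag}(g_0c,1)\,\delta(\cf_{\mathcal{C}})\bigr)$, and since $\delta(\cf_{\mathcal{C}})$ is $G_n\times\mathcal{C}$-invariant — here I use Lemma \ref{equi} again with $(1,c)$, together with $c\in\mathcal{C}$ and $(c,c)\rho$: wait, one must be careful that invariance under right multiplication by $c$ in the relevant slot follows from $\mathcal{C}$-invariance of $\cf_{\mathcal{C}}$ combined with the equivariance — one obtains $\mathrm{diag}(g_0c,1)\,\delta(\cf_{\mathcal{C}}) = \mathrm{diag}(g_0,1)\,\mathrm{diag}(c,1)\,\delta(\cf_{\mathcal{C}}) = \mathrm{diag}(g_0,1)\,\delta(\cf_{\mathcal{C}})$, because $\mathrm{diag}(c,1)$ acts as $(c,1)\in G_n\times\mathcal{D}$? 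This is not quite $\mathcal{D}$-equivariance; rather it is the $G_n$-equivariance in the first factor which always holds and which I would invoke. Then $\int_{g_0\mathcal{C}} \xi^\lambda_{\mathcal{C}}(\mathrm{diag}(g,1))\,d^*g = \mathrm{vol}(\mathcal{C})\cdot\xi^\lambda_{\mathcal{C}}(\mathrm{diag}(g_0,1))$, and since $\mathcal{C}\subset K_n$ with $\mathrm{vol}(K_n)=1$ we get $\mathrm{vol}(\mathcal{C}) = [K_n:\mathcal{C}]^{-1}$. Multiplying by $[K_n:\mathcal{C}]$ recovers $\xi^\lambda_{\mathcal{C}}(\mathrm{diag}(g_0,1))$, matching the left-hand side.

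The main obstacle, such as it is, is purely bookkeeping: keeping straight which $G_n$-factor acts in which block of $G_{2n}$ under the diagonal embedding versus the left-regular action on $C^0_c(G_n,E)$, and making sure the invariance used to pull $\xi^\lambda_{\mathcal{C}}$ out of the integral is the genuinely available one — $G_n$-equivariance in the \emph{first} variable of $\delta$ (from Lemma \ref{equi}, since all of $G_n$ is allowed there) rather than the restricted $\mathcal{D}$-equivariance in the second. Once the single-coset case is settled, linearity and the decomposition of right-$\mathcal{C}$-invariant compactly supported functions into characteristic functions of cosets finish the proof; no analytic subtlety arises because everything in sight is locally constant with compact support.
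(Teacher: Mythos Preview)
Your argument is correct and follows the same route as the paper's proof: reduce to $f=\cf_{A\mathcal{C}}$, use the $G_n$-equivariance in the first slot (Lemma~\ref{equi}) to identify the left side with $\xi^{\lambda}_{\mathcal{C}}\bigl(\begin{smallmatrix}A&0\\0&1_n\end{smallmatrix}\bigr)$, and then use that same first-slot equivariance plus $\vol(\mathcal{C})=[K_n:\mathcal{C}]^{-1}$ to evaluate the integral on the right. The hesitation about $\mathcal{D}$ is unnecessary---only the first $G_n$-factor is used, and that is unrestricted; you may delete that digression.
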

\begin{proof}
It is enough to prove the formula in the case $f=\cf_{A\mathcal{C}}=(A,1_n)\cf_{\mathcal{C}}$ with $A\in G_{n}$.
In this case we have
\begin{align*}
\int_{G_{n}}{f(g)\mu_{\lambda}(dg)}
&=\int_{G_{n}}{(A,1_n)\cf_{\mathcal{C}}(g)\mu_{\lambda}(dg)}\\
&=\xi^{\lambda}_{\mathcal{C}}\left(\begin{pmatrix}A&0\\0&1_n\end{pmatrix}\right)
=[K_{n}\colon \mathcal{C}]\int_{G_{n}} \cf_{\mathcal{C}}(g)\ \xi^{\lambda}_{\mathcal{C}}\left(\begin{pmatrix}Ag&0\\0&1_n\end{pmatrix}\right) d^{\ast}g\\
&=[K_{n}\colon \mathcal{C}]\int_{G_{n}} f(g)\ \xi^{\lambda}_{\mathcal{C}}\left(\begin{pmatrix}g&0\\0&1_n\end{pmatrix}\right) d^{\ast}g,
\end{align*}
which is exactly what we claimed.
\end{proof}
Besides the multiplicative equivariance properties of the map $\delta$ there is an additional additive equivariance.
We let $M_{n}(F)$ act on $C^{0}_{c}(M_{n}(F),E)$ by
$$\left(X\star f\right)(g)=f(g+X)$$ and on $\Ind_{P_n}^{\GL_{2n}}\pi$ via the embedding
\begin{align*}
M_{n}(F)\to G_{2n},
\ X\mapsto \begin{pmatrix}1_n&X\\0&1_n\end{pmatrix}.
\end{align*}

\begin{Lem}\label{addequi}
Let $A\in G_{n}$ be a matrix and $\mathcal{D}\subset G_{n}$ a compact, open subset such that $\rho$ is stable under $\left\{1_n\right\}\times \mathcal{D}$. Then we have
\begin{align*}
\delta(X\star f)= \begin{pmatrix}1_n&X\\0&1_n\end{pmatrix} \delta(f).
\end{align*}
for all $f\in C^{0}(A\mathcal{D},E)$ and all matrices $X\in M_{n}(F)$ with $A\mathcal{D}+X=A\mathcal{D}$.
\end{Lem}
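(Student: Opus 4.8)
The plan is a direct verification. Both sides of the claimed identity are elements of $(\Ind_{P_n}^{G_{2n}}\pi)^{\sm}$, so it suffices to evaluate them at each $g\in G_{2n}$ and compare. Write $w=\begin{pmatrix}0&1_n\\1_n&0\end{pmatrix}$ and let $N_n\subset P_n$ denote the unipotent radical. By the definition of $\delta$, the function $\delta(f')$ is supported on the open Bruhat cell $P_nwN_n$ for every $f'\in C^{0}_{c}(G_n,E)$. Since $M_n(F)$ acts on $(\Ind_{P_n}^{G_{2n}}\pi)^{\sm}$ through the right-regular $G_{2n}$-action, $\begin{pmatrix}1_n&X\\0&1_n\end{pmatrix}\delta(f)$ is the function $g\mapsto\delta(f)\left(g\begin{pmatrix}1_n&X\\0&1_n\end{pmatrix}\right)$; as $P_nwN_n$ is stable under right multiplication by $N_n$, it follows that both sides of the identity are supported on $P_nwN_n$. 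So it is enough to compare the two sides at $g=\begin{pmatrix}g_1&\ast\\0&g_2\end{pmatrix}w\begin{pmatrix}1_n&u\\0&1_n\end{pmatrix}$, a (unique) decomposition with $g_1,g_2\in G_n$ and $u\in M_n(F)$.

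The key computation is that right multiplication by $\begin{pmatrix}1_n&X\\0&1_n\end{pmatrix}$ replaces $u$ by $u+X$:
\[
g\begin{pmatrix}1_n&X\\0&1_n\end{pmatrix}=\begin{pmatrix}g_1&\ast\\0&g_2\end{pmatrix}w\begin{pmatrix}1_n&u+X\\0&1_n\end{pmatrix}.
\]
Unwinding the definitions of $\delta$ and of the $\star$-action, the right-hand side of the lemma, evaluated at $g$, equals $f(u+X)\cdot\pi(g_1,g_2(u+X))\rho$ when $u+X\in G_n$ and is zero otherwise, whereas the left-hand side, evaluated at $g$, equals $(X\star f)(u)\cdot\pi(g_1,g_2u)\rho=f(u+X)\cdot\pi(g_1,g_2u)\rho$ when $u\in G_n$ and is zero otherwise. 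Since $f$ and $X\star f$ are supported on $A\mathcal{D}$ and the hypothesis $A\mathcal{D}+X=A\mathcal{D}$ also gives $A\mathcal{D}-X=A\mathcal{D}$, a nonzero value on either side forces both $u\in A\mathcal{D}$ and $u+X\in A\mathcal{D}$; in particular $u,u+X\in G_n$, so both formulas apply, and it remains to check that $\pi(g_1,g_2(u+X))\rho=\pi(g_1,g_2u)\rho$.

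For this, I would factor $\pi(g_1,g_2(u+X))=\pi(g_1,g_2u)\,\pi(1_n,u^{-1}(u+X))$ and write $u=Ad$, $u+X=Ad'$ with $d,d'\in\mathcal{D}$; then $u^{-1}(u+X)=d^{-1}d'$. This element lies in the stabilizer $\mathcal{S}=\{h\in G_n\mid\pi(1_n,h)\rho=\rho\}$, since $\mathcal{S}$ is a subgroup of $G_n$ which, by the hypothesis that $\rho$ is stable under $\{1_n\}\times\mathcal{D}$, contains $\mathcal{D}$ and hence contains $d^{-1}d'$. Therefore $\pi(1_n,u^{-1}(u+X))\rho=\rho$, so $\pi(g_1,g_2(u+X))\rho=\pi(g_1,g_2u)\rho$, and the two evaluations coincide. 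As $g\in G_{2n}$ was arbitrary, this proves the identity $\delta(X\star f)=\begin{pmatrix}1_n&X\\0&1_n\end{pmatrix}\delta(f)$.

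I do not expect any real obstacle; the statement is an exercise in unwinding the definitions. The two points calling for a little care are keeping track of which of the two functions can be nonzero at a given $g$ (here the support condition $A\mathcal{D}+X=A\mathcal{D}$ and the stability of the cell $P_nwN_n$ under right $N_n$-translation are exactly what one uses), and the observation that $u^{-1}(u+X)=d^{-1}d'$ lies in the stabilizer $\mathcal{S}$ of $\rho$, not merely in $\mathcal{D}$; this is harmless because $\mathcal{S}$ is a group, but it is the point at which the hypothesis that $\rho$ is stable under $\{1_n\}\times\mathcal{D}$ is actually invoked.
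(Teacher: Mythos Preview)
Your proof is correct and follows essentially the same route as the paper. The paper's argument is packaged slightly differently: it introduces the auxiliary ``untwisted'' map $\partial_{\rho'}$ (which is $M_n(F)$-equivariant by construction), observes that $\delta_\rho(f)=\partial_{(1,A)\rho}(f)$ whenever $f$ is supported on $A\mathcal{D}$ (this is exactly your computation $\pi(g_1,g_2u)\rho=\pi(g_1,g_2)\pi(1_n,A)\rho$ for $u\in A\mathcal{D}$), and then the lemma is immediate. Your direct evaluation at each $g$ is the same computation with $\partial$ inlined.
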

\begin{proof}
It is convenient to introduce an untwisted version of the map $\delta$.
For every $\rho^{\prime}\in \pi$ we define the $M_{n}(F)$-equivariant map
\begin{align*}
\partial_{\rho^{\prime}}\colon C_{c}(M_{n}(F),E)\too \Ind_{P_n}^{\GL_{2n}}\pi
\end{align*}
as follows: if $g\in G_{2n}$ is of the form
\begin{align*}
g=\begin{pmatrix}g_{1}&\ast\\0&g_{2}\end{pmatrix} \begin{pmatrix}0&1_n\\ 1_n&0\end{pmatrix} \begin{pmatrix}1_n&u\\0&1_n\end{pmatrix}
\end{align*}
with $g_{1},g_2\in G_{n}$ and $u\in M_{n}(F)$, we put
$$\partial_{\rho^{\prime}}(f)(g)=f(u)\cdot \pi\left(g_{1},g_{2}\right)\rho^{\prime}$$
and otherwise we set $\partial_{\rho^{\prime}}(f)(g)=0$.

Let $f$ be a function in $C^{0}(A\mathcal{D},E)$.
Then by assumption we have
\begin{align*}
\delta_{\rho}(X\star f)
&=\partial_{(1,A)\rho}(X\star f)\\
&=\begin{pmatrix}1_n&X\\0&1_n\end{pmatrix} \partial_{(1,A)\rho}(f)\\
&=\begin{pmatrix}1_n&X\\0&1_n\end{pmatrix} \delta_{\rho}(f),
\end{align*}
which proves the assertion.
\end{proof}
\end{subsection}
\begin{subsection}{Weakly ordinary stabilizations}\label{int}
Using the map discussed in the previous section we want to construct maps from function spaces to irreducible locally algebraic representations of $G_{2n}$. The main aim of this section is to give a criterion on when these maps respect integral structures.

Let $E$ be a field of characteristic $0$ and $R\subset E$ a subring with field of fractions $E$, which is integrally closed in $E$. We are mostly interested in the case that $p$ is not invertible in $R$. 
\begin{Def}
Let $V\in \mathfrak{C}^{\sm}_{E}(G_{2n})$ be an irreducible representation.
\begin{enumerate}[(i)]
 \item A \textit{stabilization} $\Theta=(\pi,\rho,\vartheta)$ of $V$ consists of
\begin{itemize}
\item an irreducible representation $\pi\in\mathfrak{C}^{\sm}_{E}(G_n\times G_n)$,
\item a non-zero element $\rho\in\pi$ and
\item a non-zero $G$-equivariant homomorphism $\vartheta\colon \Ind_{P_{n}}^{G_{2n}}\pi\to V$.
\end{itemize}
 \item Let $\Theta=(\pi,\rho,\vartheta)$ be a stabilization of $V$.
Write $\pi$ as a tensor product $\pi_{1}\otimes \pi_{2}$ of irreducible representations $\pi_1,\pi_2\in \mathfrak{C}^{\sm}_{E}(G_{n})$.
We put $\alpha:=\alpha_\Theta:=\omega_{2}(\varpi),$ where $\omega_{2}$ denotes the central character of $\pi_{2}$.
The stabilization $\Theta$ is called $R$-\textit{integral} if $\alpha^{-1}\in R$.
\end{enumerate}
\end{Def}
\begin{Rem}
\begin{enumerate}[(i)]\thmenumhspace
\item The exact value of $\alpha$ does depend on the choice of uniformizer $\varpi$ if $\omega_{2}$ is ramified. But changing the uniformizer changes $\alpha$ only by a root of unity. Hence, the integrality condition for stabilizations is independent of the choice of uniformizer.
\item If $n=1$, the existence of a stabilization is equivalent to $V$ not being supercuspidal.
\end{enumerate}
\end{Rem}
\begin{Exa}\label{example}
\begin{enumerate}[(i)]\thmenumhspace
   \item Our guiding example will be the case of unramified principal series representations. Let $\chi_1,\ldots,\chi_r\colon F^{\ast}\to E^{\ast}$ be unramified characters. They induce a character
$$
\chi\colon B_r\too E^{\ast}\quad\mbox{ via }\quad b\mapstoo \prod_{i=1}^{r}\chi_i(b_{ii}).
$$
We will write $\Ind_{B_r}^{G_r}(\chi_1,\ldots,\chi_r)$ for the smooth representation $\Ind_{B_r}^{G_r}\chi$. Now, assume that $r=2n$ and that $V=\Ind_{B_{2n}}^{G_{2n}}(\chi_1,\ldots,\chi_{2n})$ is irreducible. We will call $\Theta^{\ur}=(\pi^{\ur},\rho^{\ur},\vartheta^{\ur})$ with
\begin{itemize}
\item $\pi^{\ur}=\Ind_{B_n}^{G_n}(\chi_1,\ldots,\chi_n)\otimes\Ind_{B_n}^{G_n}(\chi_{n+1},\ldots,\chi_{2n})$
\item $\rho^{\ur}$ the unique normalized $K_n\times K_n$-fixed vector in $\pi$
\item $\vartheta^{\ur}$ the canonical isomorphism
\end{itemize} 
the unramified stabilization of $V$ with respect to $(\chi_1\ldots \chi_{2n})$.
The unramified stabilization is integral if $\alpha^{-1}=\prod_{i=n+1}^{2n}\chi_i(\varpi)^{-1}\in R$.
Note that we get different models of $V$ and hence different unramified stabilizations by reordering (and normalizing) the characters $\chi_i$.
This amounts to $\binom{2n}{n}$ different unramified stabilizations for each irreducible unramified principal series representation.
Each of these stabilizations is defined over a finite extension of the field of definition of $V$, which can be made explicit in terms of Hecke eigenvalues.
   \item For $r\geq 1$ let $\St_r$ denote the Steinberg representation of $G_{r}$. Then $\St_{2n}$ has a canonical stabilization of the form $\Theta^{\St}=(\St_{n}\otimes \St_{n},\rho\otimes\rho,\vartheta)$, where $\rho\in \St_{n}$ is the normalized Iwahori-fixed vector. The Steinberg stabilization is defined over $\Q$ and is $\Z$-integral since $\alpha=1$.
	\item Assume we have given an irreducible representation $V\in \mathfrak{C}_{E}(G_{2n})$ together with a stabilization $\Theta=(\pi,\rho,\vartheta)$. Let $\chi\colon F^{\ast}\to E^{\ast}$ be a continuous finite order character. After choosing a non-zero element $e\in E[\chi]$, we can define the twisted stabilization $\Theta\otimes\chi=(\pi\otimes\chi,\rho\otimes e,\vartheta_{\chi}))$ of $V\otimes \chi$, where $\vartheta_{\chi}$ is given by the composition
\begin{align*}
\Ind_{P_{n}}^{G_{2n}}\pi\otimes \chi\xrightarrow{\cong} \Ind_{P_{n}}^{G_{2n}}\pi\otimes \chi \xrightarrow{\vartheta\otimes \id} V\otimes\chi.
\end{align*}	
	The twisted stabilization $\Theta\otimes\chi$ is weakly ordinary if and only if $\Theta$ is weakly ordinary.
\end{enumerate}
\end{Exa}
\noindent Given a stabilization $\Theta=(\pi,\rho,\vartheta)$ of $V$ we can precompose $\vartheta$ with the map $\delta_{\rho}$ of the previous section to obtain
\begin{align*}
\delta_{\Theta}=\vartheta \circ \delta_\rho \colon C^{0}_{c}(G_n,E)\too V.
\end{align*}
The following lemma explains the notion of integrality of stabilizations. It (resp.~its locally algebraic counterpart below) is one of the main ingredients to prove that the distributions we construct in Section \ref{global} are bounded.
\begin{Lem}\label{integrality}
Let $V\in \mathfrak{C}^{\sm}_{E}(G_{2n})$ be irreducible and $R$-integral and let $L\subset V$ be a $G_{2n}$-stable lattice. If $\Theta=(\pi,\rho,\vartheta)$ is an $R$-integral stabilization of $V$, then there exists a non-zero constant $c\in E^{\ast}$ such that
\begin{align*}
c\cdot\delta_{\Theta}\colon C_{c}^{0}(G_n,R) \too L \subset V.
\end{align*}
\end{Lem}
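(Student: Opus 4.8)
The plan is to use the equivariance properties of $\delta$ to express the family $\delta_{\Theta}(\cf_{\mathcal{C}})$, as $\mathcal{C}$ runs through the small compact open subgroups of $G_{n}$, in terms of one fixed vector of $V$ twisted by a power of $\alpha$, and then to invoke the hypothesis $\alpha^{-1}\in R$. First I would set up the reduction. The $R$-module $C^{0}_{c}(G_{n},R)$ is generated by the characteristic functions $\cf_{gK_{n}^{(m)}}$ with $g\in G_{n}$ and $m\geq 1$, and for $m<m_{1}$ decomposing $K_{n}^{(m)}$ into cosets of $K_{n}^{(m_{1})}$ writes $\delta_{\Theta}(\cf_{gK_{n}^{(m)}})$ as a finite sum of $G_{2n}$-translates of $\delta_{\Theta}(\cf_{K_{n}^{(m_{1})}})$; since $L$ is $G_{2n}$-stable it therefore suffices to treat the generators with $m\geq m_{1}$, for any fixed $m_{1}$. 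Fix $m_{1}$ large enough that $K_{n}^{(m_{1})}\times K_{n}^{(m_{1})}$ stabilizes $\rho$ (possible since $\rho$ is a smooth vector). Using $\cf_{gK_{n}^{(m)}}=(g,1_{n})\cf_{K_{n}^{(m)}}$ and the $G_{n}\times\{1_{n}\}$-equivariance of $\delta_{\Theta}=\vartheta\circ\delta_{\rho}$ (Lemma~\ref{equi} together with the $G_{2n}$-equivariance of $\vartheta$), and again the $G_{2n}$-stability of $L$, the claim reduces to producing one non-zero $c$ with $c\,\delta_{\Theta}(\cf_{K_{n}^{(m)}})\in L$ for all $m\geq m_{1}$.

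Next I would pass to the additive model of the open cell. For $m\geq m_{1}$ and $u\in K_{n}^{(m)}$ one has $\pi(1_{n},u)\rho=\rho$, so the defining formulas of $\delta_{\rho}$ and of the untwisted map $\partial_{\rho}$ of \eqref{untwisted} agree: $\delta_{\rho}(\cf_{K_{n}^{(m)}})=\partial_{\rho}(\cf_{K_{n}^{(m)}})$. Since $K_{n}^{(m)}=1_{n}+\varpi^{m}M_{n}(\mathcal{O})$ is a translate of the additive subgroup $\varpi^{m}M_{n}(\mathcal{O})$, the $M_{n}(F)$-equivariance of $\partial_{\rho}$ (compare Lemma~\ref{addequi}) gives
\begin{align*}
\delta_{\rho}(\cf_{K_{n}^{(m)}})=\begin{pmatrix}1_{n}&-1_{n}\\0&1_{n}\end{pmatrix}\cdot\partial_{\rho}\bigl(\cf_{\varpi^{m}M_{n}(\mathcal{O})}\bigr).
\end{align*}
The crucial step is a dilation identity. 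For $z_{m}=\left(\begin{smallmatrix}\varpi^{m}1_{n}&0\\0&1_{n}\end{smallmatrix}\right)\in G_{2n}$ a direct computation shows that right translation by $z_{m}$ changes the coordinates in the decomposition defining $\partial_{\rho}$ by replacing $u$ by $\varpi^{-m}u$ and $g_{2}$ by $\varpi^{m}g_{2}$; since $\varpi^{m}1_{n}$ is central in $G_{n}$ and acts on $\pi_{2}$ through $\omega_{2}(\varpi)^{m}=\alpha^{m}$, this yields
\begin{align*}
\partial_{\rho}\bigl(\cf_{\varpi^{m}M_{n}(\mathcal{O})}\bigr)=\alpha^{-m}\,z_{m}\cdot\partial_{\rho}\bigl(\cf_{M_{n}(\mathcal{O})}\bigr).
\end{align*}
Note that the analogous computation for $\delta_{\rho}$ produces no factor of $\alpha$, because the formula for $\delta_{\rho}$ involves the product $g_{2}u$, in which the two scalings cancel; it is exactly the passage to $\partial_{\rho}$ that exposes the central twist, which is why the argument has to route through it.

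To conclude, put $\psi_{0}:=\vartheta\bigl(\partial_{\rho}(\cf_{M_{n}(\mathcal{O})})\bigr)\in V$, a vector independent of $m$, and choose $c\in E^{\ast}$ with $c\psi_{0}\in L$ (possible since $V=L\otimes_{R}E$). Applying $\vartheta$ and combining the two displays,
\begin{align*}
c\,\delta_{\Theta}(\cf_{K_{n}^{(m)}})=\alpha^{-m}\cdot\left(\begin{pmatrix}1_{n}&-1_{n}\\0&1_{n}\end{pmatrix}z_{m}\right)\cdot(c\psi_{0}).
\end{align*}
The displayed matrix lies in $G_{2n}$, hence preserves the lattice $L$, and $\alpha^{-m}=(\alpha^{-1})^{m}\in R$ since $\Theta$ is $R$-integral; thus the right-hand side lies in $R\cdot L=L$, and by the first paragraph the same $c$ works for every generator of $C^{0}_{c}(G_{n},R)$. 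The only genuinely delicate step is the middle one: one must find a rewriting of $\delta_{\Theta}(\cf_{K_{n}^{(m)}})$ that is self-similar in $m$ with pure $\alpha$-power growth, so that the a priori unbounded family $\{\delta_{\Theta}(\cf_{K_{n}^{(m)}})\}_{m}$ is tamed by the single integrality condition $\alpha^{-1}\in R$.
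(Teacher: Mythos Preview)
Your proof is correct and follows essentially the same approach as the paper: both arguments show that $\delta_{\Theta}(\cf_{K_n^{(m)}})$ is $\alpha^{-m}$ times a $G_{2n}$-translate of a single fixed vector, by exploiting that dilation in the Bruhat-cell coordinate $u\mapsto \varpi u$ picks up the central-character factor $\alpha$. The paper packages this as an induction via the matrix $A=\begin{pmatrix}\varpi 1_n & (\varpi-1)1_n\\ 0 & 1_n\end{pmatrix}$ acting on $\delta(\cf_{K_n^{(r)}})$, whereas you give the closed form directly by routing through the untwisted map $\partial_{\rho}$ and anchoring at $\partial_{\rho}(\cf_{M_n(\mathcal{O})})$; since $A^{m}=u\,z_{m}\,u^{-1}$ with $u=\begin{pmatrix}1_n & -1_n\\ 0 & 1_n\end{pmatrix}$, the two computations are literally the same operator viewed in slightly different coordinates.
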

\begin{proof}
In fact, we prove a stronger statement: Let $Q\subset G_{2n}$ be the subgroup
\begin{align*}
Q=\left\{\begin{pmatrix}g&u\\0&1_n\end{pmatrix}\in G_{2n}\mid g\in G_{n},\  u\in M_{n}(F)\right\}
\end{align*}
and let $m\geq 1$ be a natural number such that $\left\{1_n\right\}\times K_{n}^{(m)}$ is in the stabilizer of $\rho$. Then the image of $C_c^{0}(G_{n},R)$ under $\delta=\delta_{\rho}$ is contained in the $R[Q]$-module generated by $\delta(\cf_{K_{n}^{(m)}})$.

\noindent For this let $A\in Q$ be the matrix given by
\begin{align*}
A=\begin{pmatrix}
\varpi 1_n & (\varpi-1)1_n\\
0 & 1_n
\end{pmatrix}.
\end{align*}
It is the product of the two matrices
\begin{align*}
A_{0}=\begin{pmatrix}
1_n & (\varpi-1)1_n\\
0 & 1_n
\end{pmatrix}
\  \mbox{ and  }\   
A_{1}=\begin{pmatrix}
\varpi 1_n & 0\\
0 & 1_n
\end{pmatrix}.
\end{align*}
\noindent In the following, we will abbreviate a scalar matrix with $a\in F$ on its diagonal simply by $a$. Using Lemma \ref{equi} we get
\begin{align*}
A\delta(\cf_{K_{n}^{(r)}})(g)&=A_{0}A_{1}\delta(\cf_{K_{n}^{(r)}})(g)\\
&=A_{0}\delta(A_{1}\cf_{K_{n}^{(r)}})(g)\\
&=A_{0}\delta(\cf_{\varpi K_{n}^{(r)}})(g)\\
&=\cf_{\varpi K_{n}^{(r)}}(u+\varpi-1)\cdot\pi(g_{1},g_{2}(u+\varpi-1))\rho\\
&=\cf_{\varpi K_{n}^{(r)}}(u+\varpi-1)\cdot \alpha\cdot \pi(g_{1},g_{2})\rho\\
&=\alpha\cdot \cf_{K_{n}^{(r+1)}}(u)\cdot \pi(g_{1},g_{2})\rho\\
&=\alpha\cdot\cf_{K_{n}^{(r+1)}}(u)\cdot \pi(g_{1},g_{2}u)\rho\\
&=\alpha\cdot\delta(\cf_{K_{n}^{(r+1)}})(g)
\end{align*}
for all $r\geq m$ and $g=\begin{pmatrix}g_{1}&\ast\\ 0&g_{2}\end{pmatrix} \begin{pmatrix}0&1_n\\ 1_n&0\end{pmatrix} \begin{pmatrix}1_n&u\\ 0&1_n\end{pmatrix}\in G_{2n}$ with $g_{i},u\in G_{n}$. Therefore, the claim follows by induction.
\end{proof}

In the remainder of this subsection we want to discuss locally algebraic versions of the previous results. In particular, $E\subset \C_{p}$ will be a finite extension of $\Q_{p}$ with valuation ring $R$. We assume that every embedding $\sigma\colon F\to \C_{p}$ factors through $E$. Let $\Z[\Hom(F,E)]$ be the free abelian group on the set of field homomorphisms from $F$ to $E$. We identify $$a=\sum a_{\sigma}\sigma \in \Z[\Hom(F,E)]$$ with the group homomorphism
$$a\colon F^{\ast}\to E^{\ast},\quad x\mapsto \prod_{\sigma}\sigma(x)^{a_{\sigma}}.$$
Given $a=\sum a_{\sigma}\sigma$ and $b=\sum b_{\sigma}\sigma$ we write $a\leq b$ if and only if $a_\sigma \leq b_\sigma$ for all $\sigma$.

Further, we fix an irreducible smooth representation $V_{\sm}\in\mathfrak{C}^{\sm}_{E}(G_{2n})$ and an irreducible, finite-dimensional $\Q_{p}$-rational representation $V_{\al}$ of $\mathbb{GL}_{2n,F}$ on a finite dimensional $E$-vector space. By definition there exist irreducible $E$-rational representations $V_{\sigma}$ of $\mathbb{GL}_{2n,E}$ for all embeddings $\sigma\colon F \to E$ and a $G$-equivariant isomorphism
\begin{align*}
V_{\al}\cong \bigotimes_{\sigma\colon F\to E}V_{\sigma}.
\end{align*}
Here $G$ acts on $V_{\sigma}$ through the embedding $\sigma\colon F\to E$. We will denote the highest weight of $V_{\sigma}$ with respect to the Borel subgroup of upper triangular matrices by $\mu_{\sigma}=(\mu_{\sigma,1},\ldots, \mu_{\sigma,2n})$. We set $e_{\sigma}=\mu_{\sigma,1}+\ldots+\mu_{\sigma,n}$ and define
\begin{align*}
e_{\al}=\sum_{\sigma \in \Hom(F,E)} e_{\sigma}\sigma.
\end{align*}
\begin{Def} Let $V=V_{\sm}\otimes V_{\al}$ be as above.
\begin{enumerate}[(i)]
    \item A \textit{stabilization} $\Theta$ of $V$ is just a stabilization of its smooth part $V_{\sm}$.
    \item We say that a stabilization $\Theta$ of $V$ is \textit{weakly ordinary} (with respect to $V_{\al}$) if $\alpha_{\Theta}^{-1}e_{\al}(\varpi)^{-1}\in R$.
		\item A \textit{critical point} $V_s$ of $V$ is a one-dimensional $G_n\times G_n$-subrepresentation of $V_{\al}$.
\end{enumerate}
\end{Def}
\begin{Rem}
\begin{enumerate}[(i)]\thmenumhspace
	\item The notion of weak ordinarity can be seen as an automorphic version of Panchishkin's $p$-ordinarity condition of motives (see Section 5 of \cite{Pa}). 
	\item Every irreducible unramified principal series representation, which is ordinary in the sense of Definition \ref{orddef}, has a weakly ordinary stabilization.
	\item On the notion of critical points: We will see in Section \ref{cohom} that critical points of L-functions of global automorphic representations correspond to certain one-dimensional subrepresentations of $V_{\al}$.
\end{enumerate}
\end{Rem}
\noindent Given a stabilization $\Theta$ and a critical point $V_s\subset V_{\al}$ of $V$ we define
\begin{align*}
\delta_{\Theta,s}\colon C_{c}^{0}(G_{n},E)\otimes V_s \xrightarrow{\delta_\vartheta \otimes \id} V_{\sm}\otimes V_s\hookrightarrow V.
\end{align*}
As an immediate consequence of Lemma \ref{equi} we get
\begin{Lem} 
The map $\delta_{\Theta,s}$ is $G_{n}\times \mathcal{D}$-equivariant, where $\mathcal{D}\subset G_n$ is the open subgroup given by $\mathcal{D}=\left\{g\in G_n\mid (g,g)\rho=\rho\right\}$.
\end{Lem}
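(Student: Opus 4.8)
The plan is to reduce the claim directly to the equivariance property of the untwisted map $\delta$ established in Lemma \ref{equi}. The map $\delta_{\Theta,s}$ is by construction the composition $(\delta_\vartheta \otimes \id)$ followed by the inclusion $V_{\sm}\otimes V_s \hookrightarrow V = V_{\sm}\otimes V_{\al}$, so the $G_n\times\mathcal{D}$-action on the source is the tensor product of the action on $C^0_c(G_n,E)$ from Section \ref{distmeas} with the restriction to $G_n\times\mathcal{D}$ of the $G_n\times G_n$-action on the critical point $V_s\subset V_{\al}$, where $G_n\times G_n$ is viewed inside $G_{2n}$ via the Levi of $P_n$. Similarly, the target $V$ carries the $G_{2n}$-action restricted along the diagonal $G_n\times G_n \hookrightarrow G_{2n}$ (again through the Levi of $P_n$), which on the tensor decomposition $V_{\sm}\otimes V_{\al}$ is the tensor of the corresponding actions.

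First I would note that $\delta_\vartheta = \vartheta\circ\delta_\rho$ is already known to be $G_n\times\mathcal{D}$-equivariant: $\delta_\rho$ is $G_n\times\mathcal{D}$-equivariant by Lemma \ref{equi} (with exactly the same $\mathcal{D}$, since $\mathcal{D}$ is defined by the condition $(g,g)\rho=\rho$), and $\vartheta$ is $G_{2n}$-equivariant by the definition of a stabilization, hence in particular equivariant for the diagonal copy of $G_n\times\mathcal{D}$. Therefore $\delta_\vartheta\otimes\id$ is equivariant for the $G_n\times\mathcal{D}$-action in which the second factor acts trivially on $V_s$; but I actually want the full action where $\mathcal{D}$ acts on $V_s$ through the Levi as well. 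The point is that $V_s$ is a \emph{one-dimensional} $G_n\times G_n$-subrepresentation, so $G_n\times G_n$ acts on it by a character, and for $(g,g)$ in the diagonal copy of $\mathcal{D}$ the action on the source factor $V_s$ is by that character evaluated at $(g,g)$, which is matched on the target by the diagonal action of $(g,g)\in G_{2n}$ restricted to $V_s$ — these are literally the same character. So the $V_s$-actions on source and target cancel against each other consistently, and the equivariance reduces to that of $\delta_\vartheta$ on the $C^0_c(G_n,E)$-factor, which we have.

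Concretely I would argue as follows: fix $f\in C^0_c(G_n,E)$, $v\in V_s$, and $(h_1,h_2)\in G_n\times\mathcal{D}$. Then
\begin{align*}
\delta_{\Theta,s}\big((h_1,h_2)(f\otimes v)\big)
&= \delta_{\Theta,s}\big((h_1,h_2)f \otimes (h_2,h_1)v\big)\\
&= \delta_\vartheta\big((h_1,h_2)f\big)\otimes (h_2,h_1)v,
\end{align*}
where $(h_2,h_1)v$ denotes the action of the Levi element; since $V_s$ is one-dimensional this is $c\cdot v$ for the scalar $c$ by which the defining character of $V_s$ sends $(h_2,h_1)$. By Lemma \ref{equi} (via $\vartheta$ being $G_{2n}$-equivariant) $\delta_\vartheta\big((h_1,h_2)f\big) = \operatorname{diag}(h_1,h_2)\,\delta_\vartheta(f)$, and on the target $V=V_{\sm}\otimes V_{\al}$ the element $\operatorname{diag}(h_1,h_2)\in G_{2n}$ (embedded via the Levi of $P_n$) acts on $V_s\subset V_{\al}$ by the same scalar $c$. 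Hence $\delta_\vartheta\big((h_1,h_2)f\big)\otimes (h_2,h_1)v = \operatorname{diag}(h_1,h_2)\big(\delta_\vartheta(f)\otimes v\big) = \operatorname{diag}(h_1,h_2)\,\delta_{\Theta,s}(f\otimes v)$, which is the desired equivariance.

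The only point requiring care — and the closest thing to an obstacle — is bookkeeping of the two $G_n\times G_n$-actions: the action on the function space $C^0_c(G_n,E)$, the action on $V_s$ as a subrepresentation of $V_{\al}$, and the diagonal embeddings into $G_{2n}$ through the Levi of $P_n$, together with the swap $(h_1,h_2)\mapsto(h_2,h_1)$ that appears in Lemma \ref{equi}. One has to confirm that the character by which $(h_1,h_2)$ acts on the source copy of $V_s$ and the character by which the corresponding Levi element acts on the target copy of $V_s$ are genuinely equal (not merely inverse), so that they cancel rather than double; this is automatic because both are restrictions of the single $G_n\times G_n$-action on $V_s\subset V_{\al}$, once the conventions for the Levi embedding are fixed consistently with those in Lemma \ref{equi}. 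Since $V_s$ is one-dimensional, there is no room for any subtlety beyond this matching of characters, so the lemma follows immediately.
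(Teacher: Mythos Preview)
Your approach is correct and matches the paper's, which simply records the lemma as ``an immediate consequence of Lemma \ref{equi}'' with no further argument. One bookkeeping slip: there is no swap $(h_1,h_2)\mapsto(h_2,h_1)$ on the $V_s$-factor. That swap appears only \emph{inside} the proof of Lemma \ref{equi}, when the Weyl element $\left(\begin{smallmatrix}0&1_n\\1_n&0\end{smallmatrix}\right)$ passes across the Levi block, and it cancels out in the final statement $\delta_\rho\big((h_1,h_2)f\big)=\left(\begin{smallmatrix}h_1&0\\0&h_2\end{smallmatrix}\right)\delta_\rho(f)$. With the natural tensor action on $C^0_c(G_n,E)\otimes V_s$ --- where $(h_1,h_2)$ acts on $V_s$ through the same Levi embedding as on the target --- your verification becomes the one-line tautology you intended, and the worry about ``matching characters'' disappears entirely.
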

\noindent There is also a version of the integrality Lemma \ref{integrality} in this setup. Let $\chi_{s}$ be the character of the one-dimensional $G_n\times G_n$-representation $V_{s}$. After choosing an isomorphism $E[\chi_{s}]\cong V_{s}$ we can consider the lattice $$C_c^{0}(G,R)\otimes_{R} \chi_{s}\subset C_{c}^{0}(G_{n},E)\otimes V_s$$
as defined in \eqref{characterspace}.
\begin{Lem}\label{integrality 2}
Suppose $V$ is $R$-integral and let $L\subset V$ be a $G_{2n}$-lattice. If $\Theta$ is a weakly ordinary stabilization of $V$ and $V_s$ is a critical point of $V$, then there exists a non-zero constant $c\in E^{\ast}$ such that
\begin{align*}
c\cdot\delta_{\Theta,s}\colon C_{c}^{0}(G_n,R)\otimes_R \chi_{\Theta}\to L \subset V.
\end{align*}
\end{Lem}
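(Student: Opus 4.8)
The plan is to reduce Lemma~\ref{integrality 2} to the smooth integrality Lemma~\ref{integrality} by carefully tracking the algebraic twist. Recall from Lemma~\ref{integrality} that the image of $C^0_c(G_n,R)$ under $\delta_\rho$ is contained in the $R[Q]$-submodule of $(\Ind_{P_n}^{G_{2n}}\pi)^{\sm}$ generated by $\delta_\rho(\cf_{K_n^{(m)}})$, for $m$ large enough that $\{1_n\}\times K_n^{(m)}$ stabilizes $\rho$; composing with $\vartheta$ and tensoring with the critical point $V_s$, the image of $C^0_c(G_n,R)\otimes_R L_s$ under $\delta_{\Theta,s}$ (for any $R$-lattice $L_s\subset V_s$) lies in the $R[Q]$-module generated by $\delta_{\Theta,s}(\cf_{K_n^{(m)}}\otimes v_s)$, where $v_s$ is an $R$-generator of $L_s$. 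So up to a non-zero constant it suffices to show this single element lies in $L$ and that the $R[Q\times K]$-module structure matches up with the lattice $C^0_c(G,R)\otimes_R\chi_\Theta$ of~\eqref{characterspace}; here $\chi_\Theta$ should be read as $\chi_s$ restricted appropriately, and $K$ a compact open subgroup with $\chi_s(K)\subset R^\ast$.

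The new phenomenon compared to the smooth case is that $Q$ no longer acts through a lattice of $V$: the unipotent radical $M_n(F)$ of $Q$ acts on $V_{\al}$, hence on $V_s$, and so does the torus part. First I would record how $Q$ acts on $V_s$. Since $V_s$ is a one-dimensional $G_n\times G_n$-representation sitting inside $V_{\al}$ via the parabolic $P_n$, its character $\chi_s$ on the Levi $G_n\times G_n$ is computed from the highest weights $\mu_\sigma$, and the total "first-block determinant" contribution gives exactly $e_{\al}$ in the notation of the excerpt; in particular the diagonal matrix $A_1=\mathrm{diag}(\varpi 1_n,1_n)$ acts on $V_s$ by $e_{\al}(\varpi)$ (up to the smooth central character piece, which is where $\alpha$ came from). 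The key computation is then to redo the inductive step of Lemma~\ref{integrality}'s proof for $\delta_{\Theta,s}$: applying $A=A_0A_1$ to $\delta_{\Theta,s}(\cf_{K_n^{(r)}}\otimes v_s)$, the unipotent part $A_0$ acts trivially modulo the relevant lattice (or is absorbed into the lattice $C^0_c(G,R)\otimes_R\chi_s$, which is built precisely to be $M_n(F)$-stable via Lemma~\ref{addequi}), while $A_1$ now contributes a factor $\alpha\cdot e_{\al}(\varpi)$ instead of just $\alpha$. Hence one gets
\begin{align*}
A\cdot\delta_{\Theta,s}\bigl(\cf_{K_n^{(r)}}\otimes v_s\bigr)=\alpha\, e_{\al}(\varpi)\cdot\delta_{\Theta,s}\bigl(\cf_{K_n^{(r+1)}}\otimes v_s\bigr),
\end{align*}
and since $\alpha^{-1}e_{\al}(\varpi)^{-1}\in R$ by the weak ordinarity hypothesis, induction on $r\geq m$ shows every $\delta_{\Theta,s}(\cf_{K_n^{(r)}}\otimes v_s)$ lies in the $R[Q]$-module generated by $\delta_{\Theta,s}(\cf_{K_n^{(m)}}\otimes v_s)$, exactly as before.

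It remains to choose $c$ so that $c\cdot\delta_{\Theta,s}(\cf_{K_n^{(m)}}\otimes v_s)\in L$: this is immediate because the left-hand side is a single vector in the $E$-vector space $V$ and $L$ is a lattice, so some $c\in E^\ast$ scales it into $L$; then $G_{2n}$-stability of $L$ together with the $R[Q]$-module containment propagates the conclusion to all of $C^0_c(G_n,R)\otimes_R\chi_\Theta$, using that this source module is generated over $R[Q\times K]$ — equivalently over $R[Q]$ after absorbing the $K$-action on the right, which is what the compact-induction description~\eqref{characterspace} encodes — by $\cf_{K_n^{(m)}}\otimes v_s$ up to an index $[K_n:K_n^{(m)}]\in\Z\subset R$. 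The main obstacle is the bookkeeping in the second paragraph: pinning down precisely which character of $Q$ (or of the diagonal torus) acts on $V_s$ and checking that its unipotent and $\mathcal{O}^\ast$-parts are absorbed by the lattice $C^0_c(G,R)\otimes_R\chi_s$ rather than introducing denominators — i.e.\ verifying that the relevant $\chi_s$ takes values in $R^\ast$ on the compact open subgroup in play — while the genuinely new contribution $e_{\al}(\varpi)$ from $A_1$ is exactly the term the weak ordinarity condition was designed to control.
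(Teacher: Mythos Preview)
Your recursion has a genuine gap. The line $V_s\subset V_{\al}$ is stable under the Levi $G_n\times G_n$, but it is \emph{not} stable under the matrix $A=A_0A_1$, because the unipotent factor $A_0=\begin{pmatrix}1_n&(\varpi-1)1_n\\0&1_n\end{pmatrix}$ lies in the unipotent radical of $P_n$, not in the Levi. Hence when you apply $A$ on the target $V=V_{\sm}\otimes V_{\al}$ you get
\[
A\cdot\bigl(\delta_\Theta(\cf_{K_n^{(r)}})\otimes v_s\bigr)
=\alpha\,\delta_\Theta(\cf_{K_n^{(r+1)}})\otimes (A\,v_s),
\]
and $A\,v_s$ is \emph{not} a scalar multiple of $v_s$: it has nontrivial components in other weight spaces of $V_{\al}$. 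Your proposed induction therefore does not close up inside $V_{\sm}\otimes V_s$, and saying that ``$A_0$ is absorbed into the source lattice $C^0_c(G,R)\otimes_R\chi_s$'' conflates source and target --- Lemma~\ref{addequi} controls the unipotent action on the smooth factor only, whereas here $A_0$ also moves $v_s$ inside $V_{\al}$. A second, smaller error: the $A_1$-eigenvalue on $V_s$ is not $e_{\al}(\varpi)$ but $e_s(\varpi)$ for the $\nu$-weight $e_s$ of $V_s$, which depends on the chosen critical point and satisfies $e_s\leq e_{\al}$ with strict inequality in general.

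The paper resolves this by abandoning $V_s$ during the induction and instead choosing a basis $(v_1,\ldots,v_k)$ of the \emph{whole} of $V_{\al}$ consisting of eigenvectors for the conjugated cocharacter $\nu'=u\nu u^{-1}$ (so that $A=\nu'(\varpi)$ genuinely acts by scalars $e_l(\varpi)$ on each $v_l$). Since every weight satisfies $e_l\leq e_{\al}$, weak ordinarity gives $\alpha^{-1}e_l(\varpi)^{-1}\in R$ for all $l$, the recursion goes through componentwise, and one shows $\delta_\Theta(\cf_{K_n^{(r)}})\otimes v_l\in L$ for all $r\geq m$ and all $l$. Only at the very end does $V_s$ reappear: one rescales the $v_l$ so that $R[\chi_s]\subset V_s$ sits inside their $R$-span.
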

\begin{proof}
Let $u \in G_{2n}$ be the matrix
\begin{align*}
u=\begin{pmatrix}
1_n&  -1_n \\
0 & 1_n
\end{pmatrix}
\end{align*}
and $\nu\colon\mathbb{G}_{m}\to\mathbb{GL}_{2n}$ the cocharacter which sends $t\in\mathbb{G}_{m}$ to the diagonal matrix $\nu(t)$ with $\nu(t)_{ii}=t$ if $1\leq i\leq n$ and $\nu(t)_{ii}=1$ if $n+1\leq i\leq 2n$. We put $\nu^{\prime}=u\nu u^{-1}$. Then the matrix $A$ we considered in the proof of Lemma \ref{integrality} is nothing but $\nu^{\prime}(\varpi)$. We can view $V_{\al}$ as a $\Res_{F/\Q_p}(\mathbb{G}_{m,F})$-representation via $\mu^{\prime}$ and hence, it has a weight space decomposition, i.e.~there exists a basis $(v_{1},\ldots,v_{k})$ of $V_{\al}$ and elements $e_{1},\ldots,e_{k}\in\Z[\Hom(F,E)]$ such that
\begin{align*}
Av_{l}=e_{l}(\varpi)v_{l}\quad \forall 1\leq l\leq k.
\end{align*}
From the proof of Lemma \ref{integrality} we see that there exists an integer $m\geq 1$ such that
$$\delta_{\Theta}(\cf_{K_{n}^{(r+1)}})\otimes v_l=
\alpha_{\Theta}^{-1}e_{l}(\varpi)^{-1}\cdot A\left(\delta_{\Theta}(\cf_{K_{n}^{(r)}})\otimes v_{l}\right)$$
for $r\geq m$ and all $1\leq l \leq k$. After multiplication with a non-zero constant we might assume that $\delta_{\Theta}(\cf_{K_{n}^{(m)}})\otimes v_{l}\in L$ for all $1\leq l\leq k$.
By definition we have $e_{l}\leq e_{\al}$ for all $1\leq l\leq k$. Thus, by the ordinarity assumption on $\Theta$ we inductively get
$$\delta_{\Theta}(\cf_{K_{n}^{(r+1)}})\otimes v_l \in L$$
for all $r\geq m$.
Multiplying the $v_{i}$ with appropriate non-zero constants we can assume that $R[\chi_{s}]\subset V_{s}$ is a submodule of the $R$-span of $v_{1},\ldots,v_{k}$ and the claim follows.
\end{proof}
\noindent For the sake of clarity let us work out the conditions of the preceding lemma in the case $F=\Q_{p}$, $n=1$, $V_{\al}=\Sym^{k}(\Q_{p}^{2})^{\vee}$ and $V_{\sm}=\Ind_{B_{2}}^{G_{2}}(\chi_{1},\chi_{2})$ an irreducible principal series representation. We set $\alpha=\chi_{2}(p)$ and $\alpha^{\prime}=\chi_{1}(p)$.
The highest weight of $V_{\al}$ is given by $\mu=(0,-k)$.
The existence of a lattice in $V=V_{\sm}\otimes V_{\al}$ implies that
\begin{enumerate}[(i)]
  \item $\alpha\alpha^{\prime} p^{-k}\in R^{\ast}$, \label{con}
	\item $\alpha\in R$ and $p\alpha^{\prime} \in R$.
\end{enumerate}
The associated unramified stabilization is ordinary if and only if $\alpha^{-1}\in R$. So the weak ordinarity hypothesis together with the existence of a lattice implies that $\alpha$ is a unit in $R$. Vice versa, it is easy to see that the representation $V$ has a lattice if $\alpha\in R^{\ast}$ and condition \eqref{con} holds.
\end{subsection}
\begin{subsection}{Local Shalika models and local distributions}\label{localshalika}
The Shalika subgroup $S$ of $G_{2n}$ is given by
\begin{align*}
S=\left\{\left.\begin{pmatrix}h&0\\0&h\end{pmatrix}\begin{pmatrix}1_n&X\\0&1_n\end{pmatrix}\right| h\in G_{n}, X\in M_{n}(F)\right\}.
\end{align*}
We fix a locally constant character $\eta\colon F^{\ast}\to \C^{\ast}$.
It induces a character $\eta\psi\colon S\to \C^{\ast}$ via
\begin{align*}
\begin{pmatrix}h&0\\0&h\end{pmatrix}\begin{pmatrix}1_n&X\\0&1_n\end{pmatrix}\mapsto \eta(\det(h))\psi(\tr(X)).
\end{align*}
\begin{Def}\label{shalika}
An irreducible representation $V\in\mathfrak{C}^{\sm}_{\C}(G_{2n})$ has a (\textit{local}) $(\eta,\psi)$-\textit{Shalika model} if there exists a non-zero functional $\lambda\colon V\to\C$ such that
$$\lambda(s\varphi)=\eta\psi(s)\lambda(\varphi)\quad \forall s\in S,\ \varphi\in\pi.$$
The functional $\lambda$ is called a (\textit{local}) $(\eta,\psi)$-\textit{Shalika functional}.
\end{Def}
\begin{Rem}\label{twistSha}
\begin{enumerate}[(i)]\thmenumhspace
\item\label{twistSha1} Suppose $V$ has an $(\eta,\psi)$-Shalika functional $\lambda$.
Let $\chi\colon F^{\ast}\to \C^{\ast}$ be a locally constant character and $e\in \C[\chi]$ a non-zero element. Then
\begin{align*}
\lambda_{\chi}\colon V\otimes \C[\chi]\to \C,\quad v\otimes e\mapsto\lambda (v)
\end{align*}
defines a $(\eta\chi^{2},\psi)$-Shalika functional on $V\otimes\chi$.
\item If $\eta$ is the trivial character, Jacquet and Rallis have shown in \cite{JR} that Shalika functionals are - if they exist - unique up to multiplication by a constant.
An elementary proof of this fact can be found in \cite{N}.
Using the first remark one gets the uniqueness of $(\eta,\psi)$-Shalika functionals if $\eta$ is a square. 
\item If $\eta$ is a finite order character, Ash and Ginzburg have proven the uniqueness of Shalika functionals for unramified, irreducible principal series representations under a technical condition on the induction parameter (see Lemma 1.7 of \cite{AG}).
\end{enumerate}
\end{Rem}
\noindent In view of the above remarks we make the following
\begin{Ass}
We assume that local Shalika functionals are - if they exist - unique up to multiplication by a non-zero scalar.
\end{Ass}
\noindent Suppose we have given a stabilization $\Theta=(\pi,\rho,\vartheta)$ of an irreducible representation $V\in\mathfrak{C}^{\sm}_{\C}(G_{2n})$, which has a Shalika functional $\lambda$.
Let $\mu_{\Theta}:=\mu_{\lambda\circ\vartheta}\in \Dist(G_n,\C)$ be the distribution defined in \eqref{pullbackdist}.
\begin{Lem}\label{localdst}
Assume that $V$ is generic and that there exists $t\in\C$ such that $V\otimes\left|\det\right|^{t}$ is unitary.
For every continuous character $\chi\colon F^{\ast}\to \C^{\ast}$ the integral
\begin{align}\label{localdist}
E(\Theta,\chi,s):=\int_{G_n}\chi(\det(g))\left|\det(g)\right|^{s-1/2}\ d\mu_{\Theta}(g)
\end{align}
converges absolutely for $\mathfrak{Re}(s)$ large.
There is a factorization
\begin{align}\label{localmod}
E(\Theta,\chi,s)=e(\Theta,\chi,s)\cdot L(V\otimes\chi,s),
\end{align}
where $e(\Theta,\chi,s)$ is an entire function.
Hence, $E(\Theta,\chi,s)$ can be extended to a meromorphic function on $\C$.
\end{Lem}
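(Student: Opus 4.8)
The plan is to identify $E(\Theta,\chi,s)$, up to an elementary constant, with a local zeta integral of Friedberg--Jacquet type attached to a single vector in the $(\eta,\psi)$-Shalika model of $V$, and then to invoke the local theory of such integrals. To set this up, note first that since $\vartheta$ is $G_{2n}$-equivariant we have $\xi^{\lambda\circ\vartheta}_{\varphi}(g)=\lambda(\vartheta(g\varphi))=\lambda(g\,\vartheta(\varphi))$ for every $\varphi\in(\Ind_{P_{n}}^{G_{2n}}\pi)^{\sm}$, so that the functions $\xi^{\lambda\circ\vartheta}_{\varphi}$ all lie in the Shalika model $\mathcal{S}(V)=\{\,g\mapsto\lambda(gv)\mid v\in V\,\}\subseteq\Ind_{S}^{G_{2n}}(\eta\psi)$ of $V$. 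I would fix $m\geq 1$ large enough that $\chi$ is trivial on $U^{(m)}$ and that $\mathcal{C}:=K_{n}^{(m)}$ stabilizes $\rho$, and set $\Phi:=\xi^{\lambda\circ\vartheta}_{\mathcal{C}}\in\mathcal{S}(V)$; the function $f_{\chi,s}(g):=\chi(\det g)|\det g|^{s-1/2}$ is then locally constant and invariant under right translation by $\mathcal{C}$.

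Next I would pass from Lemma \ref{density} to the full integral by truncation. Fixing an exhaustion $G_{n}=\bigcup_{i}\Omega_{i}$ by compact--open subsets, each a union of right $\mathcal{C}$-cosets, and applying Lemma \ref{density} to the compactly supported function $f_{\chi,s}\cdot\cf_{\Omega_{i}}$ gives
\[
\int_{\Omega_{i}}f_{\chi,s}(g)\,\mu_{\Theta}(dg)=[K_{n}\colon\mathcal{C}]\int_{\Omega_{i}}\chi(\det g)|\det g|^{s-1/2}\,\Phi\!\left(\begin{pmatrix}g&0\\0&1_{n}\end{pmatrix}\right)d^{\ast}g .
\]
The right-hand side is a truncation of the Friedberg--Jacquet integral $Z(s,\Phi,\chi)=\int_{G_{n}}\chi(\det g)|\det g|^{s-1/2}\,\Phi\!\left(\begin{pmatrix}g&0\\0&1_{n}\end{pmatrix}\right)d^{\ast}g$. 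Using the genericity of $V$ one expresses $g\mapsto\Phi\!\left(\begin{pmatrix}g&0\\0&1_{n}\end{pmatrix}\right)$ in terms of Whittaker functions, and the hypothesis that $V\otimes|\det|^{t}$ be unitary --- which confines the exponents of $V$ to a bounded strip --- then yields that $Z(s,\Phi,\chi)$ converges absolutely for $\mathfrak{Re}(s)$ large. Granting this, the right-hand side above converges as $i\to\infty$, hence so does the left-hand side; by definition this means that $E(\Theta,\chi,s)$ converges absolutely and equals $[K_{n}\colon\mathcal{C}]\,Z(s,\Phi,\chi)$, a quantity which is visibly independent of the auxiliary choice of $m$.

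Finally I would read off the factorization from the local theory. The asymptotic expansion of $g\mapsto\Phi\!\left(\begin{pmatrix}g&0\\0&1_{n}\end{pmatrix}\right)$ along the diagonal torus of $G_{n}$ is governed by a finite set of exponents coming from the Jacquet modules of $V$; substituting it into $Z(s,\Phi,\chi)$ exhibits $Z(s,\Phi,\chi)$ as a rational function of $q^{-s}$ whose poles are among those of $L(V\otimes\chi,s)$ and of no larger order (this is the Friedberg--Jacquet analogue of the Jacquet--Piatetski-Shapiro--Shalika divisibility argument; compare \cite{JR} and the computations in \cite{AG}). Hence $Z(s,\Phi,\chi)$ extends meromorphically to $\C$ and $e(\Theta,\chi,s):=[K_{n}\colon\mathcal{C}]\cdot Z(s,\Phi,\chi)/L(V\otimes\chi,s)$ is entire, which is exactly \eqref{localmod}.

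The step I expect to be the main obstacle is this last local input: establishing the asymptotic expansion of the Shalika function in the torus direction, with exponents matched to the local $L$-factor, and deducing from it both the absolute convergence for $\mathfrak{Re}(s)$ large and the divisibility by $L(V\otimes\chi,s)$. This is precisely where the hypotheses that $V$ be generic and essentially unitary are genuinely used; the remaining steps --- the identification via Lemma \ref{density} and the truncation argument --- are formal.
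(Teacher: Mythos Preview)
Your proposal is correct and follows the same route as the paper: identify $E(\Theta,\chi,s)$ via Lemma~\ref{density} with a Friedberg--Jacquet zeta integral of the Shalika-model vector $\xi^{\lambda\circ\vartheta}_{\mathcal{C}}$, then invoke the local theory of these integrals. The paper is terser --- it simply cites \cite{FJ}, Proposition~3.1, for both the absolute convergence and the divisibility by $L(V\otimes\chi,s)$, rather than sketching the asymptotic-expansion argument you flag as the main obstacle; your truncation step is a harmless elaboration making the use of Lemma~\ref{density} (stated only for compactly supported $f$) more explicit.
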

\begin{proof}
Let $\mathcal{C}\subseteq K_n$ be an open subgroup, which is contained in the kernel of $\chi\circ\det$. By Lemma \ref{density} we have the following equality:
$$E(\Theta,\chi,s)=[K_{n}\colon \mathcal{C}] \int_{G_{n}}{\chi(\det(g))\left|\det(g)\right|^{s-1/2}\ \xi^{\lambda\circ \vartheta}_{\mathcal{C}}\left(\begin{pmatrix}g&0\\0&1_n \end{pmatrix}\right)d^{\ast}g}$$
The function $\xi^{\lambda\circ\vartheta}_{\mathcal{C}}$ is an element of the Shalika model of $V$. Hence, the claim follows from \cite{FJ} Proposition 3.1.
\end{proof}
\begin{Rem}
In the case $n=1$, the vector $\rho$ of the stabilization $\Theta$ is determined uniquely up to a constant and therefore, the modified Euler factor essentially does not depend on $\rho$.
If $n>1$, there are different choices of $\rho$ yielding a priori different modified Euler factors.
\end{Rem}

The modified Euler factors $E(\Theta,\chi,s)$ behave well under twisting.
Using Remark \ref{twistSha} \eqref{twistSha1} a straightforward calculation gives
\begin{Lem}\label{dothetwist}
Let $\chi^{\prime}\colon F^{\ast}\to \C^{\ast}$ be a continuous character.
Then the equality
\begin{align*}
E(\Theta\otimes \chi^{\prime},\chi,s)=E(\Theta,\chi^{\prime}\chi,s)
\end{align*}
holds.
\end{Lem}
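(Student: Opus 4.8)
The plan is to unwind both sides to the defining integral \eqref{localdist} and to check that the two distributions $\mu_{\Theta\otimes\chi'}$ and $\mu_{\Theta}$, paired against the appropriate characters of $\det$, agree. Recall from Example \ref{example}(iii) that the twisted stabilization is $\Theta\otimes\chi'=(\pi\otimes\chi',\rho\otimes e,\vartheta_{\chi'})$ with $\vartheta_{\chi'}$ the composite of the canonical isomorphism $(\Ind_{P_n}^{G_{2n}}\pi\otimes\chi')^{\sm}\xrightarrow{\cong}(\Ind_{P_n}^{G_{2n}}\pi)^{\sm}\otimes\chi'$ followed by $\vartheta\otimes\id$, and that by Remark \ref{twistSha}\eqref{twistSha1} the Shalika functional on $V\otimes\chi'$ is $\lambda_{\chi'}(v\otimes e)=\lambda(v)$. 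So the first step is purely formal: trace through the definition of $\delta_{\rho\otimes e}$ for the representation $\pi\otimes\chi'$. Writing $g\in G_{2n}$ in the Bruhat form $\begin{pmatrix}g_1&\ast\\0&g_2\end{pmatrix}\begin{pmatrix}0&1_n\\1_n&0\end{pmatrix}\begin{pmatrix}1_n&u\\0&1_n\end{pmatrix}$, one has $\delta_{\rho\otimes e}(f)(g)=f(u)\cdot(\pi\otimes\chi')(g_1,g_2u)(\rho\otimes e)$, which under the canonical isomorphism equals $\chi'(\det g_1)\chi'(\det(g_2 u))\cdot\big(\delta_\rho(f)(g)\otimes e\big)$.

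Next I would push this through $\vartheta_{\chi'}$ and the Shalika functional to identify the distribution $\mu_{\Theta\otimes\chi'}=\lambda_{\chi'}\circ\vartheta_{\chi'}\circ\delta_{\rho\otimes e}$ with a twist of $\mu_\Theta$. The key observation is that the relevant evaluation in \eqref{localdist} uses $\xi^{\lambda\circ\vartheta}_{\mathcal C}$ evaluated at $\begin{pmatrix}g&0\\0&1_n\end{pmatrix}$, i.e. at points with $g_1=g$, $g_2=1_n$, $u=0$; along this locus the extra factor produced by the twist is exactly $\chi'(\det g_1)\chi'(\det g_2)=\chi'(\det g)$. More invariantly, combining Lemma \ref{equi} with the $(g,g)$-equivariance, one gets the identity of distributions $d\mu_{\Theta\otimes\chi'}(g)=\chi'(\det g)\,d\mu_{\Theta}(g)$ on $G_n$ (here the two occurrences $\chi'(\det g_1)$ and $\chi'(\det g_2 u)$ conspire, via the Shalika transformation law $\lambda(s\varphi)=\eta\psi(s)\lambda(\varphi)$ applied to the diagonal torus $\begin{pmatrix}h&0\\0&h\end{pmatrix}$, to produce a single factor $\chi'(\det g)$).

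Granting that identity, the conclusion is immediate:
\begin{align*}
E(\Theta\otimes\chi',\chi,s)&=\int_{G_n}\chi(\det g)\,|\det g|^{s-1/2}\,d\mu_{\Theta\otimes\chi'}(g)\\
&=\int_{G_n}\chi(\det g)\chi'(\det g)\,|\det g|^{s-1/2}\,d\mu_{\Theta}(g)\\
&=\int_{G_n}(\chi'\chi)(\det g)\,|\det g|^{s-1/2}\,d\mu_{\Theta}(g)=E(\Theta,\chi'\chi,s),
\end{align*}
first for $\mathfrak{Re}(s)$ large where Lemma \ref{localdst} guarantees absolute convergence, and then everywhere by meromorphic continuation. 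The only real point requiring care — the ``main obstacle,'' though it is more bookkeeping than difficulty — is verifying that the various $\chi'$-factors attached to $g_1$, to $g_2$, and to $u$ (the last coming from the $\pi(g_1,g_2u)$ versus $\pi(g_1,g_2)$ discrepancy between the twisted and untwisted $\delta$) collapse correctly to the single factor $\chi'(\det g)$ once one restricts to the support locus used in Lemma \ref{density} and invokes the Shalika equivariance; everything else is a formal manipulation of the definitions in Example \ref{example}(iii) and Remark \ref{twistSha}\eqref{twistSha1}.
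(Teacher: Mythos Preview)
Your overall strategy is exactly the ``straightforward calculation'' the paper has in mind: unwind the definitions from Example~\ref{example}(iii) and Remark~\ref{twistSha}\eqref{twistSha1}, obtain the relation $d\mu_{\Theta\otimes\chi'}(g)=\chi'(\det g)\,d\mu_{\Theta}(g)$ (up to a harmless normalization constant), and plug into the defining integral. So the proposal is correct and matches the paper's approach.

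One genuine confusion to fix: in your second paragraph you write that the relevant evaluation of $\xi^{\lambda\circ\vartheta}_{\mathcal C}$ occurs ``at points with $g_1=g$, $g_2=1_n$, $u=0$''. This is not right. The function $\xi^{\lambda\circ\vartheta}_{\varphi}(h)=\lambda(\vartheta(h\varphi))$ is the Shalika functional applied to a \emph{translate} of $\varphi$, not the value of $\varphi$ at $h$; moreover $\begin{pmatrix}g&0\\0&1_n\end{pmatrix}$ lies in $P_n$ and does not admit the Bruhat decomposition you wrote. The clean way to justify the key identity is either (i) compute directly that under the canonical isomorphism $(\Ind_{P_n}^{G_{2n}}\pi\otimes\chi')^{\sm}\cong(\Ind_{P_n}^{G_{2n}}\pi)^{\sm}\otimes\chi'$ one has $\delta_{\rho\otimes e}(f)\mapsto c\cdot\delta_{\rho}\big((\chi'\circ\det)\cdot f\big)\otimes e$, using your first-paragraph formula and the observation $\chi'(\det g_1)\chi'(\det g_2u)=\chi'((-1)^n)\chi'(\det g)\chi'(\det u)$; or (ii) bypass $\delta$ entirely and use Lemma~\ref{density} together with the immediate identity $\xi^{\lambda_{\chi'}}_{\varphi\otimes e}(h)=\chi'(\det h)\,\xi^{\lambda}_{\varphi}(h)$, which follows straight from Remark~\ref{twistSha}\eqref{twistSha1}. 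Either route gives the desired equality; your ``more invariantly'' sentence is gesturing at (ii) but should be made precise.
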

Let us take a closer look at the spherical example:
Fix unramified characters $\chi_{1},\ldots,\chi_{2n}$ such that $V=\Ind_{B_{2n}}^{G_{2n}}(\chi_1,\ldots,\chi_{2n})$ is irreducible and has a unitary twist.
Assume that $V$ has a $(\eta,\psi)$-Shalika model.
Then by Proposition 1.3 of \cite{AG} we know that $\eta$ is unramified and we may assume that $\chi_{i}=\eta\chi_{2n-i+1}^{-1}$ (which we will do in the following).
Conversely every such unramified principal series representation has a Shalika model.
More precisely: Write $\beta_{i}=\chi_{i}(\varpi)q^{n-i+1/2}=\alpha_{i}q^{n-i+1/2}$ for the Satake parameters of $V$ and let $\left|\cdot\right|_{\infty}$ be the standard norm on $\C$.
If we assume that $\left|\beta_{i}\beta_{j}\right|_{\infty}<1$ for all $1\leq i<j \leq n$, then by \cite{AG}, Lemma 1.4, the following absolutely convergent integral gives the Shalika functional: 
\smaller\begin{align*}
\lambda(\varphi)=\int_{K_{n}}\int_{M_{n}(F)}{\varphi
\left(
\begin{pmatrix} 0&1_n\\1_n&0 \end{pmatrix}
\begin{pmatrix} g&0\\0&g \end{pmatrix}
\begin{pmatrix} 1_n&X\\0&1_n \end{pmatrix}
\right)
\eta^{-1}(\det(g))\psi^{-1}(\tr(X))\ dXd^{\ast}g}.
\end{align*}\larger
Here $dX$ denotes an additive Haar measure on $M_{n}(F)$.
If $\beta_{i}\beta_{j}\neq \eta^{\pm 1}(\varpi)$ for all $1\leq i<j\leq n$, then the Shalika functional can be defined via analytic continuation of the above integral (see the proof of \cite{AG}, Proposition 1.3).

Let $\Theta^{\ur}=(\pi^{\ur},\rho^{\ur},\vartheta^{\ur})$ be the unramified stabilization of $V$ with respect to $(\chi_1,\ldots \chi_{2n})$.
We can write $\rho^{\ur}=\rho_1\otimes \rho_2$, where $\rho_{1}\in \Ind_{B_{n}}^{G_{n}}(\chi_{1},\ldots,\chi_{n})$ and $\rho_{2}\in \Ind_{B_{n}}^{G_{n}}(\chi_{n+1},\ldots,\chi_{2n})$ are normalized such that $\rho_{i}(k)=1$ for $i=1,2$ and all $k\in K_{n}$.
\begin{Pro}\label{integrals}
Let $V=\Ind_{B_{2n}}^{G_{2n}}(\chi_1,\ldots,\chi_{2n})$ be an irreducible unramified principal series as above with Shalika functional $\lambda$.
Assume that $\beta_{i}\beta_{j}\neq \eta^{\pm 1}(\varpi)$ for all $1\leq i<j\leq n$.
Then we have
\begin{align}\label{prevaluation}
\int_{G_{n}}f(g) \mu_{\Theta^{\ur}}(dg)= \int_{M_{n}(F)}{\cf_{G_{n}}(X)\rho_{2}(X)f(X)\psi^{-1}(\tr(X))\ dX}
\end{align}
for all $f\in C_{c}^{0}(G_{n},\C)$ which are invariant under conjugation by $K_{n}$. 

\noindent As a special case we get:
For every continuous character $\chi\colon F^{\ast}\to \C^{\ast}$ we have
\begin{align}\label{evaluation}
E(\Theta^{\ur},\chi,s)= \int_{M_{n}(F)}{\cf_{G_{n}}(X)\rho_{2}(X)\chi(\det(X))\left|\det(X)\right|^{s-1/2}\psi^{-1}(\tr(X))\ dX}
\end{align}
for $\mathfrak{Re}(s)$ large.
\end{Pro}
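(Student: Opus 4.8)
The plan is to unwind the definition $\mu_{\Theta^{\ur}}=\mu_{\lambda\circ\vartheta^{\ur}}$ and evaluate the pullback distribution by hand, using the explicit integral for the Shalika functional $\lambda$. Write $w=\left(\begin{smallmatrix}0&1_n\\1_n&0\end{smallmatrix}\right)$. Since $\int_{G_n}f(g)\,\mu_{\Theta^{\ur}}(dg)=\lambda\bigl(\vartheta^{\ur}(\delta_{\rho^{\ur}}(f))\bigr)$, the first step is to evaluate $\vartheta^{\ur}(\delta_{\rho^{\ur}}(f))\in V$ on the elements $w\left(\begin{smallmatrix}g_0&0\\0&g_0\end{smallmatrix}\right)\left(\begin{smallmatrix}1_n&X\\0&1_n\end{smallmatrix}\right)$, $g_0\in K_n$, $X\in M_n(F)$, appearing in the Shalika integral. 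A direct matrix identity gives $w\left(\begin{smallmatrix}g_0&0\\0&g_0\end{smallmatrix}\right)\left(\begin{smallmatrix}1_n&X\\0&1_n\end{smallmatrix}\right)=\left(\begin{smallmatrix}g_0&0\\0&g_0\end{smallmatrix}\right)w\left(\begin{smallmatrix}1_n&X\\0&1_n\end{smallmatrix}\right)$, so this element lies in the cell occurring in the definition of $\delta$ with $g_1=g_2=g_0$ and $u$-coordinate equal to $X$; hence $\delta_{\rho^{\ur}}(f)$ vanishes there unless $X\in G_n$, in which case it takes the value $f(X)\cdot\pi^{\ur}(g_0,g_0X)\rho^{\ur}$. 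The map $\vartheta^{\ur}$ is the transitivity-of-induction isomorphism, which in the (unnormalized) principal series setting is simply evaluation at the identity of the Levi $G_n\times G_n$; writing $\rho^{\ur}=\rho_1\otimes\rho_2$ and using that $\rho_1,\rho_2$ are normalized to equal $1$ on $K_n$, one obtains
\begin{align*}
\vartheta^{\ur}(\delta_{\rho^{\ur}}(f))\left(w\left(\begin{smallmatrix}g_0&0\\0&g_0\end{smallmatrix}\right)\left(\begin{smallmatrix}1_n&X\\0&1_n\end{smallmatrix}\right)\right)=\cf_{G_n}(X)\,f(X)\,\rho_1(g_0)\,\rho_2(g_0X).
\end{align*}

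Next I would argue that, although $\lambda$ is a priori obtained only by meromorphic continuation — this is precisely where the hypothesis $\beta_i\beta_j\neq\eta^{\pm1}(\varpi)$ is needed, to guarantee the existence of $\lambda$ via the continuation of the integral of \cite{AG} — it is nonetheless computed on the vectors $\vartheta^{\ur}(\delta_{\rho^{\ur}}(f))$ by that naive integral. Indeed, since $f$ is compactly supported, the formula above shows that the integrand of the Shalika integral vanishes off the compact set $K_n\times(\supp(f)\cap G_n)$, so the integral is a finite integral with holomorphic integrand; being entire in the continuation parameter, it agrees with its continuation. Plugging the formula in and using $\rho_1(g_0)=1$ for $g_0\in K_n$ yields
\begin{align*}
\int_{G_n}f(g)\,\mu_{\Theta^{\ur}}(dg)=\int_{K_n}\int_{M_n(F)}\cf_{G_n}(X)\,f(X)\,\rho_2(g_0X)\,\eta^{-1}(\det g_0)\,\psi^{-1}(\tr X)\,dX\,d^{\ast}g_0.
\end{align*}

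The third step collapses the $K_n$-integral and is where the conjugation invariance of $f$ enters. The functions $\cf_{G_n}$, $\psi^{-1}\circ\tr$ and the additive Haar measure $dX$ are invariant under $X\mapsto g_0^{-1}Xg_0$, and so is $f$ by hypothesis; substituting $X\mapsto g_0^{-1}Xg_0$ in the inner integral thus turns $\rho_2(g_0X)$ into $\rho_2(Xg_0)$, which equals $\rho_2(X)$ by the right $K_n$-invariance of the spherical vector $\rho_2$, independently of $g_0$. As $\eta$ is unramified (as recalled above), $\eta^{-1}(\det g_0)=1$ for $g_0\in K_n$, and $K_n$ has volume $1$; the outer integral disappears and \eqref{prevaluation} follows. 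Finally \eqref{evaluation} is obtained by applying \eqref{prevaluation} to the $K_n$-conjugation-invariant function $g\mapsto\chi(\det g)\left|\det g\right|^{s-1/2}$ via a routine truncation/limit argument, legitimate for $\mathfrak{Re}(s)$ large by the absolute convergence on the left-hand side established in Lemma \ref{localdst}.

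The part I expect to be the main obstacle is the bookkeeping in the first two steps: making the induction-in-stages identification of $\vartheta^{\ur}$ precise enough — together with the normalizations of the Haar measure and the modulus character — to land on $\rho_2(g_0X)$ with constant exactly $1$, and carefully justifying that the meromorphically continued $\lambda$ really is represented by the naive convergent integral on the image of $\delta_{\rho^{\ur}}$.
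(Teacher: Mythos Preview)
Your proposal is correct and matches the paper's proof essentially step for step: both evaluate $\delta_{\rho^{\ur}}(f)$ at the Shalika integrand points to obtain $\cf_{G_n}(X)f(X)\rho_1(g_0)\rho_2(g_0X)$, handle the analytic continuation by observing that the integral is entire in the deformation parameter because $f$ has compact support, and collapse the $K_n$-integral via the substitution $X\mapsto g_0^{-1}Xg_0$ together with right $K_n$-invariance of $\rho_2$ and unramifiedness of $\eta$. The only cosmetic difference is that the paper makes the deformation parameter $s$ explicit from the outset (writing $V^s$, $\rho_i^s$, $\delta^s$, $\lambda^s$) and performs the substitution before invoking compact support, whereas you note compact support of the Shalika integrand first; either order works.
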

\begin{proof}
For every $s\in \C$ we define
\begin{align*}
V^{s}=\Ind_{B_{n}}^{G_{n}}(\chi_{1}\left|\cdot\right|^{s},\ldots,\chi_{2n}\left|\cdot\right|^{s})
\end{align*}
and let
\begin{align*}
\rho^{s}_{1}\in \Ind_{B_{n}}^{G_{n}}(\chi_{1}\left|\cdot\right|^{s},\ldots,\chi_{n}\left|\cdot\right|^{s}) \mbox{ resp. } \rho^{s}_{2}\in \Ind_{B_{n}}^{G_{n}}(\chi_{n+1}\left|\cdot\right|^{-s},\ldots,\chi_{2n}\left|\cdot\right|^{-s})
\end{align*} be the normalized spherical vectors, i.e.~$\rho^{s}_{i}(k)=1$ for all $k\in K_{n}$, $i=1,2$.
We write $\delta^{s}$ for the corresponding maps
\begin{align*}\delta^{s}\colon C_{c}^{0}(G_{n},\C)\to\Ind_{B_{2n}}^{G_{2n}}(\chi_{1}\left|\cdot\right|^{s},\ldots,\chi_{n}\left|\cdot\right|^{s},\chi_{n+1}\left|\cdot\right|^{-s},\ldots,\chi_{2n}\left|\cdot\right|^{-s})=:\pi^{s}
\end{align*}
and $\lambda^{s}$ for the Shalika functional of $V^{s}$.
Since the map $s\mapsto \lambda^{s}(\delta^{s}(f))$ is analytic we can compute the left hand side of \eqref{prevaluation} as the analytic continuation to $s=0$ of the integral 
\begin{align*}
&\int_{K_{n}}\int_{M_{n}(F)}{\hspace{-1,0em}\delta^{s}(f)
\left(
\begin{pmatrix} 0&1_n\\1_n&0 \end{pmatrix}
\begin{pmatrix} g&0\\0&g \end{pmatrix}
\begin{pmatrix} 1_n&X\\0&1_n \end{pmatrix}
\right)
\eta^{-1}(\det(g))\psi^{-1}(\tr(X))\ dXd^{\ast}g} \\
=&\int_{K_{n}}\int_{G_{n}}{
\rho^{s}_{1}(g)\rho^{s}_{2}(gX)f(X)
\eta^{-1}(\det(g))\psi^{-1}(\tr(X))\ dXd^{\ast}g}\\\\
=&\int_{K_{n}}\int_{G_{n}}{
\rho^{s}_{1}(g)\rho^{s}_{2}(Xg)f(g^{-1}Xg)
\eta^{-1}(\det(g))\psi^{-1}(\tr(g^{-1}Xg))\ dXd^{\ast}g}\\
=&\int_{G_{n}}{
\rho_{2}(X)\left|\det(X)\right|^{-s}f(x)
\psi^{-1}(\tr(X))\ dX}.
\end{align*}
The claim follows since $f$ has compact support inside $G_n$.
\end{proof}

\end{subsection}
\begin{subsection}{Computation of modified Euler factors}
We are going to evaluate the Euler factors $E(\Theta^{\ur},\chi,s)$ of Proposition \ref{integrals}.
Using the Iwasawa decomposition we can reduce the integral over $G_n$ to sums of integrals over explicit compact open subsets of $G_n$.
Most of these integrals vanish by orthogonality of characters applied either to the fixed additive character $\psi$ or the multiplicative character $\chi$. 

A fair amount of the computations work in a more general setup:
We fix an irreducible representation $V\in \mathfrak{C}_\C(G_{2n})$, which admits a Shalika functional $\lambda$ and a stabilization $\Theta=(\pi,\rho,\vartheta)$.
Let $I_{n}^{(r)}\subset K_n$ denote the Iwahori subgroup of Level $\mathfrak{p}^{r}$, i.e.~the set of all matrices in $K_n$, which are upper triangular modulo $\mathfrak{p}^{r}$ and write $\widetilde{I}_{n}^{(m)}\subset I_{n}^{(m)}$ for the subgroup of matrices which are unipotent upper triangular modulo $\p^{m}$.
We assume that $\rho$ is stabilized by the group $\left\{1_n\right\}\times I_n^{(1)}$.
In particular, $\alpha_{\Theta}$ is independent of the choice of a local uniformizer.
We are going to use the following two properties:
\begin{enumerate}[(I)]
  \item $K_{n}^{(m)}\subset I_{n}^{(1)}$ for all $m\geq 1$ and \label{prop1}
	\item $\det\colon I_{n}^{(1)}\to U$ is surjective. \label{prop2}
\end{enumerate}

\begin{Def}
The \textit{order} $\ord(A)$ of a matrix $A\in M_{n}(F)$ is the minimum of the $\nu(A_{ij})$, $1\leq i,j\leq n$.
\end{Def}
\noindent It is a straightforward calculation to show that 
\begin{align*}
\ord(AB)\geq\ord(A)+\ord(B)
\end{align*}
for all $A,B\in M_{n}(F)$.
In particular, we get an equality if one of the matrices is in $K_{n}$.

\begin{Lem}\label{vanish}
Let $A\in G_{n}$ be a matrix and $m\in\Z$ an integer with $1\leq m<-\ord(A)$. We have the following equality:
\begin{align*}
\int_{G_n}{\cf_{AK_{n}^{(m)}}\ d\mu_{\Theta}}=0.
\end{align*}
\end{Lem}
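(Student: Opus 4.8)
The plan is to reduce the integral to an evaluation of the function $\xi^{\lambda\circ\vartheta}_{K_n^{(m)}}$ via Lemma \ref{density}, and then exploit the $\{1_n\}\times I_n^{(1)}$-stabilization of $\rho$ together with the Shalika equivariance to extract a nontrivial character sum which vanishes by orthogonality. First I would apply Lemma \ref{density} with $\mathcal{C}=K_n^{(m)}$ (noting $K_n^{(m)}\subset K_n$ and that $\chi\circ\det$ plays no role here, as we are integrating the locally constant function $\cf_{AK_n^{(m)}}$, which is indeed right $K_n^{(m)}$-invariant). This gives
\begin{align*}
\int_{G_n}\cf_{AK_n^{(m)}}\ d\mu_{\Theta}=[K_n\colon K_n^{(m)}]\int_{G_n}\cf_{AK_n^{(m)}}(g)\,\xi^{\lambda\circ\vartheta}_{K_n^{(m)}}\!\left(\begin{pmatrix}g&0\\0&1_n\end{pmatrix}\right)d^{\ast}g,
\end{align*}
so after a change of variables the question reduces to showing $\xi^{\lambda\circ\vartheta}_{K_n^{(m)}}\!\left(\begin{pmatrix}A&0\\0&1_n\end{pmatrix}\right)=0$, i.e. $(\lambda\circ\vartheta)\!\left(\begin{pmatrix}A&0\\0&1_n\end{pmatrix}\delta_\rho(\cf_{K_n^{(m)}})\right)=0$, where the hypothesis $1\le m<-\ord(A)$ is now available.

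Next I would unwind $\delta_\rho(\cf_{K_n^{(m)}})$ using its defining formula and the explicit Bruhat-type decomposition, so that the value at $\begin{pmatrix}A&0\\0&1_n\end{pmatrix}$ — or rather its translate under a suitable element of the Shalika group $S$ — is expressed in terms of $\pi$ evaluated at elements whose $u$-coordinate is forced into $\varpi^{-\ord(A)}$-type cosets; the condition $m<-\ord(A)$ is what makes the relevant set of $u$'s a union of $K_n^{(m)}$-cosets transforming nontrivially. Then, using that $\rho$ is stabilized by $\{1_n\}\times I_n^{(1)}$ (and property \eqref{prop1}, so $K_n^{(m)}\subset I_n^{(1)}$), together with the Shalika functional's equivariance $\lambda(s\varphi)=\eta\psi(s)\lambda(\varphi)$ applied to elements $s$ of the form $\begin{pmatrix}h&0\\0&h\end{pmatrix}\begin{pmatrix}1_n&X\\0&1_n\end{pmatrix}$, I would rewrite $(\lambda\circ\vartheta)\!\left(\begin{pmatrix}A&0\\0&1_n\end{pmatrix}\delta_\rho(\cf_{K_n^{(m)}})\right)$ as an average over a compact open subgroup against a nontrivial additive character $\psi(\tr(\cdot))$ (arising from the unipotent part of $S$, whose argument scales by the negative power $\varpi^{\ord(A)}$ contributed by $A$) or against a nontrivial restriction of $\eta$ — at which point orthogonality of characters kills the integral.

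The main obstacle will be the bookkeeping in the second paragraph: organizing the Bruhat cell decomposition of $\begin{pmatrix}A&0\\0&1_n\end{pmatrix}\begin{pmatrix}0&1_n\\1_n&0\end{pmatrix}\begin{pmatrix}1_n&u\\0&1_n\end{pmatrix}$ when $A$ has negative order, identifying precisely which translate by $S$ (and which unipotent shift, cf. Lemma \ref{addequi}) brings the relevant matrices into the support of $\delta_\rho(\cf_{K_n^{(m)}})$, and then checking that the resulting character on the averaging subgroup is genuinely nontrivial exactly because $m<-\ord(A)$. The equivariance Lemmas \ref{equi} and \ref{addequi}, property \eqref{prop2} (surjectivity of $\det$ on $I_n^{(1)}$, to handle the $\eta$-component cleanly), and the submultiplicativity $\ord(AB)\ge\ord(A)+\ord(B)$ with equality for $B\in K_n$ are the tools I expect to carry this through; the actual character-sum vanishing at the end is then routine.
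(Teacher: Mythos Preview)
Your high-level idea --- use the additive equivariance (Lemma~\ref{addequi}) together with the Shalika transformation law to pull out a nontrivial value of $\psi$ --- is exactly right, but the route you sketch is considerably more circuitous than necessary, and the second paragraph heads in a direction that would never need to be carried out.

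First, the detour through Lemma~\ref{density} is circular: by definition $\int_{G_n}\cf_{AK_n^{(m)}}\,d\mu_\Theta=(\lambda\circ\vartheta)(\delta_\rho(\cf_{AK_n^{(m)}}))$, and by Lemma~\ref{equi} this already equals $(\lambda\circ\vartheta)\bigl(\begin{smallmatrix}A&0\\0&1_n\end{smallmatrix}\bigr)\delta_\rho(\cf_{K_n^{(m)}})$, so your ``reduction'' just recovers the starting expression. More importantly, the plan to ``unwind $\delta_\rho(\cf_{K_n^{(m)}})$ via the Bruhat decomposition'' is precisely what Lemma~\ref{addequi} is designed to bypass; you list that lemma among your tools but then don't use it to avoid the bookkeeping you flag as the main obstacle. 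Finally, neither $\eta$ nor property~\eqref{prop2} plays any role here (they enter in Lemma~\ref{gauss}, not this one).

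The paper's argument is a one-line application of the tools you already named. Pick indices $k,l$ with $\nu(A_{kl})=\ord(A)$, and choose $b\in F^\ast$ with $\nu(b)=-\nu(A_{kl})-1$ and $\psi(A_{kl}b)\neq 1$ (possible since $\psi$ has conductor $\mathcal{O}$). Let $B\in M_n(F)$ be the matrix with $b$ in position $(l,k)$ and zeros elsewhere. The hypothesis $m<-\ord(A)$ gives $\nu(b)\geq m$, so $B\in\varpi^m M_n(\mathcal{O})$; hence $AK_n^{(m)}+AB=AK_n^{(m)}$, i.e.\ $AB\star\cf_{AK_n^{(m)}}=\cf_{AK_n^{(m)}}$. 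Now Lemma~\ref{addequi} (with $\mathcal{D}=K_n^{(m)}\subset I_n^{(1)}$ by property~\eqref{prop1}) and the Shalika law give
\[
\int_{G_n}\cf_{AK_n^{(m)}}\,d\mu_\Theta
=\lambda\Bigl(\begin{pmatrix}1_n&AB\\0&1_n\end{pmatrix}\delta_\Theta(\cf_{AK_n^{(m)}})\Bigr)
=\psi(\tr(AB))\int_{G_n}\cf_{AK_n^{(m)}}\,d\mu_\Theta,
\]
and since $\psi(\tr(AB))=\psi(A_{kl}b)\neq 1$ the integral vanishes. No averaging, no orthogonality sum, no Bruhat bookkeeping: one well-chosen unipotent shift suffices.
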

\begin{proof}
Choose $k,l\in\left\{1,\ldots,n\right\}$ such that $\ord(A)=\nu(A_{kl})$.
By assumption, there exists $b\in F^{\ast}$ with $\nu(b)=-\nu(A_{kl})-1$ and $\psi(A_{kl}b)\neq 1$.
Define the matrix $B\in \varpi^{m}M_{n}(\mathcal{O})$ via
\begin{align*}
B_{ij}=\begin{cases}
b & \mbox{if } i=l,\ j=k,\\
0 & \mbox{else}. 
\end{cases}
\end{align*}
The indicator function on the set $AK_{n}^{(m)}$ is clearly invariant under addition by matrices in $A\varpi^{m}M_{n}(\mathcal{O})$.
Hence, by Lemma \ref{addequi} and property \eqref{prop1} we get
\begin{align*}
\int_{G_n}{\cf_{AK_{n}^{m}}\ d\mu_{\Theta}}
&= \lambda (\delta_{\Theta}(\cf_{AK_{n}^{(m)}}))\\ 
&= \lambda (\delta_{\Theta}(AB\star\cf_{AK_{n}^{(m)}}))\\
&= \lambda \left(
\begin{pmatrix}
1_n & AB\\
0 & 1_n
\end{pmatrix}
\delta_{\Theta}(\cf_{AK_{n}^{(m)}})\right)\\
&= \psi(\tr(AB)) \lambda (\delta_{\Theta}(\cf_{AK_{n}^{(m)}}))\\
&= \psi(A_{kl}b) \int_{G_n}{\cf_{AK_{n}^{(m)}}\ d\mu_{\Theta}}.
\end{align*}
Since $\psi(\tr(A_{kl}b))\neq 1$ the claim follows.
\end{proof}

\begin{Cor}\label{vanish2}
Let $\chi\colon F^{\ast}\to\C^{\ast}$ be a character of conductor $\mathfrak{f}(\chi)=\mathfrak{p}^{m}$ with $m\geq 0$ and let $A\in G_{n}$ with $\ord(A)<-\max(m,1)$.
Then we have
\begin{align*}
\int_{G_{n}}{(\chi\circ\det) \cdot \cf_{A K_{n}}\ d\mu_{\Theta}}=0.
\end{align*}
\end{Cor}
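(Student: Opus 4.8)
The idea is to decompose $AK_n$ into a disjoint union of cosets of a small congruence subgroup on which $\chi\circ\det$ is constant, and then show each piece contributes zero by Lemma \ref{vanish}. Concretely, set $m' = \max(m,1)$, so that $\chi\circ\det$ is trivial on $K_n^{(m')}$ by property \eqref{prop2} (the conductor of $\chi$ divides $\mathfrak{p}^{m'}$, hence $\det$ maps $I_n^{(1)}\supset K_n^{(m')}$ into the kernel of $\chi$). Write $K_n$ as a finite disjoint union $K_n = \coprod_j k_j K_n^{(m')}$ with $k_j\in K_n$. Then $AK_n = \coprod_j Ak_j K_n^{(m')}$, and since $\chi\circ\det$ is constant, equal to $\chi(\det(Ak_j))$, on each $Ak_jK_n^{(m')}$, we get
\begin{align*}
\int_{G_n}(\chi\circ\det)\cdot\cf_{AK_n}\ d\mu_\Theta = \sum_j \chi(\det(Ak_j))\int_{G_n}\cf_{Ak_jK_n^{(m')}}\ d\mu_\Theta.
\end{align*}

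The key point is now that each summand vanishes. Since $k_j\in K_n$, we have $\ord(Ak_j) = \ord(A) < -m' = -\max(m,1)$, and in particular $1\le m' < -\ord(Ak_j)$. Thus Lemma \ref{vanish}, applied with the matrix $Ak_j$ and the integer $m'$, shows $\int_{G_n}\cf_{Ak_jK_n^{(m')}}\ d\mu_\Theta = 0$ for every $j$. Summing over the finitely many $j$ gives the claim.

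I expect no serious obstacle here; the only point requiring a little care is the bookkeeping on orders and levels, namely checking that $m' = \max(m,1)$ simultaneously satisfies $m' \ge 1$ (so property \eqref{prop1} lets us invoke Lemma \ref{vanish}), that $\chi\circ\det$ really is trivial on $K_n^{(m')}$ (this uses $\mathfrak{f}(\chi) = \mathfrak{p}^m \mid \mathfrak{p}^{m'}$ together with property \eqref{prop2} applied to the surjection $\det\colon I_n^{(1)}\to U$, since $K_n^{(m')}\subset I_n^{(1)}$), and that the strict inequality $\ord(A) < -m'$ survives right-translation by $K_n$, which it does because $\ord$ is right-$K_n$-invariant. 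Once these are in place the corollary is immediate from Lemma \ref{vanish}.
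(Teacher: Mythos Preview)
Your proof is correct and follows exactly the same route as the paper: set $m'=\max(m,1)$, break $AK_n$ into cosets $AkK_n^{(m')}$, pull out the constant value $\chi(\det(Ak))$, and kill each integral by Lemma~\ref{vanish} using $\ord(Ak)=\ord(A)<-m'$. One small remark: the fact that $\chi\circ\det$ is trivial on $K_n^{(m')}$ does not require property~\eqref{prop2} at all---it is simply that $\det(K_n^{(m')})\subseteq U^{(m')}$ and $\chi$ has conductor dividing $\mathfrak{p}^{m'}$; your appeal to the surjection $\det\colon I_n^{(1)}\to U$ is irrelevant here (and would not by itself force $\chi\circ\det$ to vanish on $I_n^{(1)}$ when $\chi$ is ramified).
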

\begin{proof}
Let $m'=\max(m,1)$. We can rewrite the integral as
\begin{align*}
\int_{G_{n}}{(\chi\circ\det)\cdot\cf_{A K_{n}}\ d\mu_{\Theta}}
&=\sum_{k\in K_{n}/K_{n}^{(m')}} \chi(\det(Ak)) \int_{G_{n}}{ \cf_{AkK_{n}^{(m')}} d\mu_{\Theta}}.
\end{align*}
Using the fact that $\ord(Ak)=\ord(A)$ for all $k \in K_{n}$ the claim follows from Lemma \ref{vanish}.
\end{proof}

\begin{Lem}\label{gauss}
Let $\chi\colon F^{\ast}\to\C^{\ast}$ be a character of conductor $\mathfrak{f}(\chi)=\mathfrak{p}^{m}$ with $m\geq 1$ and let $A\in G_{n}$ be a matrix with $\ord(A)>-m$.
We have the following equality:
\begin{align*}
\int_{G_{n}}{(\chi\circ\det) \cdot \cf_{A K_{n}}\ d\mu_{\Theta}}=0.
\end{align*}
\end{Lem}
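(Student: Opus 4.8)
The plan is to expand $\cf_{AK_n}$ into cosets of $K_n^{(m)}$, convert the resulting sum into a finite exponential sum over $I_n^{(1)}/K_n^{(m)}$ by means of the additive equivariance, and then annihilate that sum by orthogonality of characters.

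First I would decompose. Using property \eqref{prop1} together with the normality of $K_n^{(m)}$ in $K_n$, write $K_n=\bigsqcup_{[\kappa_0]}\bigsqcup_{[\iota]}\kappa_0\iota K_n^{(m)}$, where $\kappa_0$ runs over representatives of $K_n/I_n^{(1)}$ and $\iota$ over representatives of $I_n^{(1)}/K_n^{(m)}$. Since $\det K_n^{(m)}\subseteq U^{(m)}\subseteq\ker\chi$, the function $\chi\circ\det$ is constant on each coset $A\kappa_0\iota K_n^{(m)}$, so
\begin{align*}
\int_{G_n}(\chi\circ\det)\cdot\cf_{AK_n}\,d\mu_\Theta=\sum_{[\kappa_0]}\sum_{[\iota]}\chi(\det(A\kappa_0\iota))\int_{G_n}\cf_{A\kappa_0\iota K_n^{(m)}}\,d\mu_\Theta .
\end{align*}
Now fix $\kappa_0$ and set $c:=A\kappa_0$, so that $\ord(c)=\ord(A)>-m$. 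All the functions $\cf_{c\iota K_n^{(m)}}$ are supported inside the single coset $cI_n^{(1)}$, and since $\rho$ is stabilised by $\{1_n\}\times I_n^{(1)}$, the map $\delta_\rho$ agrees on $C^{0}(cI_n^{(1)},\C)$ with the untwisted map $\partial_{(1_n,c)\rho}$ of \eqref{untwisted} — this is exactly the identity underlying the proof of Lemma \ref{addequi}. Writing $c\iota K_n^{(m)}=cK_n^{(m)}+c(\iota-1_n)$ as cosets of the lattice $\varpi^m cM_n(\mathcal O)$, we have $\cf_{c\iota K_n^{(m)}}=\big(c(1_n-\iota)\big)\star\cf_{cK_n^{(m)}}$; applying the $M_n(F)$-equivariance of $\partial$ (which, unlike Lemma \ref{addequi}, requires no hypothesis on the translating matrix) and then the functional $\lambda\circ\vartheta$, which picks up the factor $\psi(\tr X)$ from $\begin{pmatrix}1_n&X\\0&1_n\end{pmatrix}\in S$, one obtains
\begin{align*}
\int_{G_n}\cf_{c\iota K_n^{(m)}}\,d\mu_\Theta=\psi(\tr c)\,\psi^{-1}(\tr(c\iota))\int_{G_n}\cf_{cK_n^{(m)}}\,d\mu_\Theta .
\end{align*}
Substituting this back, the inner sum over $[\iota]$ becomes $\psi(\tr c)\,\chi(\det c)\big(\int_{G_n}\cf_{cK_n^{(m)}}\,d\mu_\Theta\big)\cdot S(c)$ with
\begin{align*}
S(c):=\sum_{[\iota]\in I_n^{(1)}/K_n^{(m)}}\chi(\det\iota)\,\psi^{-1}(\tr(c\iota)) .
\end{align*}

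It remains to show $S(c)=0$ whenever $\ord(c)>-m$; this is where property \eqref{prop2} and the ramification of $\chi$ enter. If $\ord(c)\geq 0$, then $c\iota\in M_n(\mathcal O)$ for every $\iota\in I_n^{(1)}$, so $\psi^{-1}(\tr(c\iota))=1$ and $S(c)=\sum_{[\iota]}\chi(\det\iota)$; by \eqref{prop2} the character $\chi\circ\det$ is non-trivial on $I_n^{(1)}$ (as $\chi$ is ramified) and trivial on $K_n^{(m)}$, so this sum over $I_n^{(1)}/K_n^{(m)}$ vanishes. If $-m<\ord(c)\leq -1$, put $j:=-\ord(c)\in\{1,\dots,m-1\}$; then $K_n^{(m)}\subseteq K_n^{(j)}\subseteq I_n^{(1)}$ by \eqref{prop1}, the function $\iota\mapsto\psi^{-1}(\tr(c\iota))$ is constant on each left coset of $K_n^{(j)}$ in $I_n^{(1)}$ (because $\ord(\varpi^{j}c\iota'W)\geq j+\ord(c)=0$ for $\iota'\in I_n^{(1)}$ and $W\in M_n(\mathcal O)$), and $\chi\circ\det$ restricts to a non-trivial character of $K_n^{(j)}$ (since $\det K_n^{(j)}=U^{(j)}$ and $j<m$) that is trivial on $K_n^{(m)}$. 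Grouping $S(c)$ into packets indexed by $K_n^{(j)}/K_n^{(m)}$, each packet is a constant times $\sum_{K_n^{(j)}/K_n^{(m)}}\chi\circ\det=0$. Hence $S(c)=0$ in every case, so each inner sum above vanishes and the lemma follows.

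The main obstacle is the middle step: the additive ($\psi$-)equivariance only becomes usable once one has trivialised the situation on a single Iwahori coset through the untwisted map $\partial$, and one must verify that what survives is genuinely a sum of a non-trivial character over a finite group — which is precisely what the hypothesis $\ord(A)>-m$ (equivalently $j<m$) guarantees; the case $m=1$ is the degenerate instance $j=0$ already covered by the first alternative.
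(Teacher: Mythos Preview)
Your argument is correct. The underlying mechanism is the same as the paper's — additive equivariance of $\partial$/Lemma~\ref{addequi} combined with the Shalika transformation law, followed by orthogonality of characters coming from the ramification of $\chi$ — but the packaging differs in a way worth recording.

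The paper does not decompose into $K_n^{(m)}$-cosets at all. For $m\geq 2$ it proves the stronger statement $\int(\chi\circ\det)\cf_{AK_n^{(m-1)}}\,d\mu_\Theta=0$ directly: since $\ord(A)\geq -(m-1)$, every translate $AB$ with $B\in\varpi^{m-1}M_n(\mathcal O)$ lies in $M_n(\mathcal O)$, so by Lemma~\ref{addequi} and $\psi(\tr AB)=1$ the integral is unchanged when the integrand is replaced by $AB\star\bigl((\chi\circ\det)\cf_{AK_n^{(m-1)}}\bigr)$; averaging over $B$ then collapses the integrand pointwise to $\chi(\det(Ak))\int_{K_n^{(m-1)}}\chi(\det k')\,dk'=0$. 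The case $m=1$ is handled separately by the same averaging trick with $I_n^{(1)}$ in place of $K_n^{(m-1)}$, using property~\eqref{prop2}.

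Your route instead refines to $K_n^{(m)}$-cosets, passes through the untwisted $\partial$ to use unrestricted additive equivariance, and isolates the explicit twisted sum $S(c)$, which you kill by a case split on $\ord(c)$. This buys a uniform treatment of all $m\geq 1$ (for $m=1$ one is automatically in the case $\ord(c)\geq 0$), and makes the connection with Gauss sums visible; the price is the extra layer of coset decomposition and the two-case analysis of $S(c)$. The paper's averaging trick is shorter for $m\geq 2$ but needs the separate remark at $m=1$. Either way the essential input is the same: $\chi\circ\det$ is nontrivial on $K_n^{(m-1)}$ (resp.\ $I_n^{(1)}$, resp.\ $K_n^{(j)}$) precisely because the conductor of $\chi$ is $\mathfrak p^m$.
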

\begin{proof}
Firstly, let us assume we are in the case $m\geq 2$.
We are going to prove the stronger statement:
\begin{align*}
\int_{G_{n}}{(\chi\circ\det) \cdot \cf_{A K_{n}^{(m-1)}}\ d\mu_{\Theta}}=0.
\end{align*}
We have $AB\in M_{n}(\mathcal{O})$ for every $B\in \varpi^{m-1}M_{n}(\mathcal{O})$.
Therefore, using Lemma \ref{addequi} and property \eqref{prop1} we have
\begin{align*}
\int_{G_{n}}{(\chi\circ\det) \cdot \cf_{A K_{n}^{((m-1)}}\ d\mu_{\Theta}}
&= \lambda \left( \delta_{\rho}((\chi\circ\det) \cdot \cf_{A K_{n}^{(m-1)}})\right) \\
&= \lambda \left( 
\begin{pmatrix}
	1_n & AB \\ 0 & 1_n
\end{pmatrix}
\delta_{\rho}((\chi\circ\det) \cdot \cf_{A K_{n}^{(m-1)}})\right) \\
&= \lambda \left( \delta_{\rho}\left(AB\star\left((\chi\circ\det) \cdot \cf_{A K_{n}^{(m-1)}}\right)\right)\right).
\end{align*}
Taking the average over all $B\in \varpi^{m-1}M_n(\mathcal{O})$ yields
\begin{align*}
&\int_{\varpi^{m-1}M_n(\mathcal{O})}\left(AB\star\left((\chi\circ\det) \cdot \cf_{A K_{n}^{(m-1)}}\right)\right)(Ak)\ dB \\
=& \int_{\varpi^{m-1}M_n(\mathcal{O})} \chi(\det(Ak +AB) )\ dB \\
=& \chi(\det(Ak)) \int_{\varpi^{m-1}M_n(\mathcal{O})} \chi(\det(1_n +k^{-1}B ))\ dB \\
=& \chi(\det(Ak)) \int_{\varpi^{m-1}M_n(\mathcal{O})} \chi(\det(1_n +B) )\ dB \\
=& \chi(\det(Ak)) \int_{K_{n}^{(m-1)}} \chi(\det(k^{\prime}) )\ dk^{\prime}
\end{align*}
for every $k\in K_{n}^{(m-1)}$. Since $\det\colon K_{n}^{(m-1)} \to U^{(m-1)}$ is surjective,
the character $\chi\circ\det\colon K_{n}^{(m-1)}\to \C^{\ast}$ is non-trivial.
Hence, by orthogonality of characters the last integral vanishes.

The case $m=1$ can be proven in the same manner using property \eqref{prop2}.
\end{proof}

\begin{Def}
Let $\chi\colon F^{\ast}\to\C^{\ast}$ be a quasicharacter of conductor $\mathfrak{f}(\chi)=\mathfrak{p}^{m}$ with $m\geq 0$ and $a\in F^{\ast}$ with $\nu(a)=-m$
 We define the Gauss sum of $\chi$ (with respect to $\psi$) as
\begin{align*}
\tau(\chi):=\tau(\chi,\psi):=[U:U^{(m)}]\int_{U}{\chi(ag)\psi(ag) d^{\ast}g}.
\end{align*}
\end{Def}
\noindent For $r=(r_1,\ldots, r_{n})\in \Z^{n}$ we let $T_{r}\in G_n$ be the diagonal matrix given by $(T_{r})_{ii}=\varpi^{r_{i}}$ for $1\leq i\leq n$.
\begin{Lem}\label{cond}
Let $\chi\colon F^{\ast}\to\C^{\ast}$ be a character of conductor $\mathfrak{f}(\chi)=\mathfrak{p}^{m}$ with $m\geq 1$ and $r=(r_{1},\ldots,r_{n})\in \Z^{n}$.
If $r_i=-m$ for all $1\leq i\leq n$, we have
\begin{align*}
\int_{G_n}{\chi \cdot \cf_{T_r I_{n}^{(m)}}\ d\mu_{\Theta}}=
\tau(\chi)^{n} \left(\alpha\ q^{(n-n^{2})/{2}}\right)^{-m} q^{\frac{n^{2}+n}{2}} \lambda(\vartheta(\partial_{\rho}(\cf_{\widetilde{I}_{n}^{(1)}})))
\end{align*}
and otherwise we have
$$\int_{G_n}{\chi \cdot \cf_{T_r I_{n}^{(m)}}\ d\mu_{\Theta}}=0.$$
\end{Lem}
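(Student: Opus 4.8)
The quantity to compute is $I:=\int_{G_n}(\chi\circ\det)\,\cf_{T_r I_n^{(m)}}\,d\mu_\Theta=\lambda\bigl(\vartheta(\delta_\rho((\chi\circ\det)\cf_{T_r I_n^{(m)}}))\bigr)$; the whole argument is an orthogonality computation resting on Lemmas \ref{equi} and \ref{addequi}, the $(\eta,\psi)$-equivariance of $\lambda$ under the Shalika group, the hypothesis that $\rho$ is fixed by $\{1_n\}\times I_n^{(1)}$, and properties \eqref{prop1}--\eqref{prop2}. I would first dispose of the vanishing cases. If $\min_i r_i<-m$, write $\cf_{T_r I_n^{(m)}}=\sum_k\cf_{T_r k K_n^{(m)}}$ over coset representatives $k$ of $I_n^{(m)}/K_n^{(m)}$; since $\det K_n^{(m)}\subseteq U^{(m)}$ the function $\chi\circ\det$ is constant on each $T_r k K_n^{(m)}$, and each $T_r k$ has order $\min_i r_i<-m$, so Lemma \ref{vanish} kills every term. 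If instead $r_i\ge -m$ for all $i$ but $r_j>-m$ for some $j$, I would run the averaging argument in the proof of Lemma \ref{gauss}: translate $\cf_{T_r I_n^{(m)}}$ by matrices supported on a single entry, over a lattice chosen to preserve the support, use Lemma \ref{addequi} and the $\psi$-equivariance of $\lambda$ under upper-unipotent blocks, average, and conclude by orthogonality of the nontrivial character $\chi\circ\det$ on the relevant congruence subgroup (property \eqref{prop2} taking care of $m=1$). The subtle point here is that $T_r$ is non-central, so one has to track the Iwahori lattice $L=\{B\mid B_{ij}\in\mathfrak p^m\ (i>j),\ B_{ij}\in\mathcal O\ (i\le j)\}$ entrywise; as in Lemmas \ref{vanish}--\ref{gauss} I expect this to reduce to a one-variable statement.

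In the surviving case $r_i=-m$ for all $i$, so $T_r=\varpi^{-m}1_n$. Decompose $I_n^{(m)}=\bigsqcup_t D_t\widetilde I_n^{(m)}$ over representatives $D_t=\operatorname{diag}(t_1,\dots,t_n)$ of $(U/U^{(m)})^n$ (legitimate since $\widetilde I_n^{(m)}$ is normal in $I_n^{(m)}$ with quotient the diagonal torus modulo $\mathfrak p^m$). On $\varpi^{-m}D_t\widetilde I_n^{(m)}$ one has $\chi\circ\det=\chi(\varpi)^{-mn}\prod_i\chi(t_i)$ because $\det\widetilde I_n^{(m)}\subseteq U^{(m)}$. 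Since $\varpi^{-m}D_t\widetilde I_n^{(m)}\subseteq\varpi^{-m}1_n\cdot I_n^{(1)}$, unwinding the definition of $\delta_\rho$ and using that $\{1_n\}\times I_n^{(1)}$ fixes $\rho$ gives, for every $t$,
\begin{align*}
\delta_\rho(\cf_{\varpi^{-m}D_t\widetilde I_n^{(m)}})=\alpha^{-m}\,\partial_\rho(\cf_{\varpi^{-m}D_t\widetilde I_n^{(m)}}),
\end{align*}
the factor $\alpha^{-m}=\omega_2(\varpi)^{-m}$ arising from $\pi(1_n,\varpi^{-m}1_n)\rho=\omega_2(\varpi)^{-m}\rho$. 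Hence $\alpha^{-m}$ pulls out uniformly and
\begin{align*}
I=\chi(\varpi)^{-mn}\alpha^{-m}\sum_t\Bigl(\prod_i\chi(t_i)\Bigr)\lambda\vartheta\partial_\rho(\cf_{\varpi^{-m}D_t\widetilde I_n^{(m)}}).
\end{align*}

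To evaluate the sum, translate $\varpi^{-m}D_t\widetilde I_n^{(m)}$ by $-\varpi^{-m}D_t$; this turns it into the $t$-independent lattice $L'=\{X\mid X_{ij}\in\mathcal O\ (i\ge j),\ X_{ij}\in\varpi^{-m}\mathcal O\ (i<j)\}$, so by the additive equivariance of $\partial_\rho$ and the $\psi$-equivariance of $\lambda$ each summand becomes $\psi(-\varpi^{-m}\sum_i t_i)\,\lambda\vartheta\partial_\rho(\cf_{L'})$, and the sum over $t$ factors as $\prod_{i=1}^n\sum_{t_i}\chi(t_i)\psi(-\varpi^{-m}t_i)$, which is $\chi(\varpi)^{mn}\tau(\chi)^n$ essentially by the defining property of the Gauss sum; this cancels $\chi(\varpi)^{-mn}$. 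Finally $\widetilde I_n^{(1)}-1_n\subseteq L'$ with index $[L':\widetilde I_n^{(1)}-1_n]=q^{(n^2+n)/2+m(n^2-n)/2}$, and decomposing $\cf_{L'}$ into these cosets and translating each back onto $\widetilde I_n^{(1)}$ -- the translations being trivial on $\lambda$ since all relevant traces lie in $\mathcal O$ -- gives $\lambda\vartheta\partial_\rho(\cf_{L'})=q^{(n^2+n)/2+m(n^2-n)/2}\,\lambda\vartheta\partial_\rho(\cf_{\widetilde I_n^{(1)}})$. Assembling all factors yields $I=\tau(\chi)^n\bigl(\alpha\,q^{(n-n^2)/2}\bigr)^{-m}q^{(n^2+n)/2}\,\lambda(\vartheta(\partial_\rho(\cf_{\widetilde I_n^{(1)}})))$ as claimed.

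Conceptually everything is forced: the $\alpha^{-m}$ is the $\varpi^{-m}$-factor absorbed at the level of $\rho$, the $\tau(\chi)^n$ is orthogonality of $\chi$ against $\psi$ in the $n$ diagonal directions, and the $q$-power is the product of the two lattice indices. The main obstacle is the normalization bookkeeping -- getting the exponents $(n-n^2)/2$ and $(n^2+n)/2$ exactly right and matching the Gauss-sum and sign conventions -- together with the entrywise Iwahori combinatorics needed in the second vanishing case for non-central $T_r$; that is where I would expect the bulk of the effort to lie.
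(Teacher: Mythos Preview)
Your computation in the surviving case $r_i=-m$ is correct and matches the paper's argument closely; the translations and index counts you give are equivalent to the paper's use of the linear change of variables for $dX$ together with the index $[\widetilde I_n^{(1)}:\widetilde I_n^{(m)}]$. The difference lies in how you handle the vanishing.

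You split the vanishing into two cases and leave the second one --- $r_i\ge -m$ for all $i$ with some $r_j>-m$ --- only sketched, calling it ``the subtle point\ldots where I would expect the bulk of the effort to lie''. In fact no extra effort is needed: the diagonal decomposition you carry out for the surviving case works verbatim for \emph{arbitrary} $r$. Write $I_n^{(m)}=\bigsqcup_\epsilon \epsilon\widetilde I_n^{(m)}$ with $\epsilon$ running over $(U/U^{(m)})^n$, pull out $\chi(\det T_r\epsilon)=\prod_i\chi(\varpi^{r_i}\epsilon_i)$, and translate each $T_r\epsilon\widetilde I_n^{(m)}$ additively by the diagonal matrix $T_r(\epsilon-1_n)$ onto $T_r\widetilde I_n^{(m)}$ via Lemma~\ref{addequi} (the target is $\epsilon$-independent since $T_r\epsilon\widetilde I_n^{(m)}=T_r\widetilde I_n^{(m)}+T_r(\epsilon-1_n)$, using that $\epsilon$ normalizes the additive lattice $\widetilde I_n^{(m)}-1_n$). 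The $\psi$-equivariance of $\lambda$ produces the factor $\prod_i\psi(\varpi^{r_i}(\epsilon_i-1))$, and the sum over $\epsilon$ factors as
\[
\prod_{i=1}^n\Bigl(\sum_{\epsilon_i\in U/U^{(m)}}\chi(\varpi^{r_i}\epsilon_i)\,\psi(\varpi^{r_i}\epsilon_i)\Bigr)\cdot\int_{G_n}\cf_{T_r\widetilde I_n^{(m)}}\,d\mu_\Theta.
\]
Each bracket is a one-dimensional Gauss-type sum which, by the $n=1$ case of Lemma~\ref{gauss}, vanishes whenever $r_i>-m$. This disposes of your second case with no ``entrywise Iwahori combinatorics'' at all. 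Your first case ($\min_i r_i<-m$) via Lemma~\ref{vanish} is fine and also underlies the consistency of the factored expression when some $r_i<-m$.

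So the gap is not a missing idea but a missed simplification: you already have the right tool --- the diagonal coset decomposition --- but apply it only after specializing to $r_i=-m$, which forces you into a case analysis you then cannot close. Do the decomposition for general $r$ first, and both the vanishing and the explicit formula fall out together.
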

\begin{proof}
In the following we will identify elements $\epsilon=(\epsilon_{1},\ldots,\epsilon_{n})\in (U/U^{(m)})^{n}$ with the corresponding diagonal matrices in $\GL_{n}(\mathcal{O}/\p^{m})$ (resp.~with representatives in $K_n$). We have
\begin{align*}
\int_{G_n}{\chi \cdot \cf_{T_r I_{n}^{(m)}}\ d\mu_{\Theta}}
=& \sum_{\epsilon\in (U/U^{(m)})^{n}} \int_{G_n}{\chi \cdot \cf_{T_r \epsilon\widetilde{I}_{n}^{(m)}}\ d\mu_{\Theta}}\\
=& \sum_{\epsilon\in (U/U^{(m)})^{n}} \prod_{i=1}^{n}\chi(\varpi^{r_i}\epsilon_i){\int_{G_n} \cf_{T_r \epsilon\widetilde{I}_{n}^{(m)}}\ d\mu_{\Theta}}.
\end{align*}
Applying \eqref{addequi} with the matrix $T_r(\epsilon-1_n)$ yields
\begin{align*}
\int_{G_n}{\chi \cdot \cf_{T_r I_{n}^{(m)}}\ d\mu_{\Theta}}
=&\hspace{-0,7em} \sum_{\epsilon\in (U/U^{(m)})^{n}} \prod_{i=1}^{n}\chi(\varpi^{r_i}\epsilon_i)\psi(\varpi^{r_i}(\epsilon_i-1))\int_{G_n}{ \cf_{T_r \widetilde{I}_{n}^{(m)}}\ d\mu_{\Theta}}\\
=& \prod_{i=1}^{n}\psi(-\varpi^{r_i})\hspace{-0,7em} \sum_{\varepsilon_i\in U/U^{(m)}} \hspace{-1em}\chi(\varpi^{r_i}\varepsilon_{i})\psi(\varpi^{r_i}\varepsilon_{i}) \int_{G_n}{ \cf_{T_r \widetilde{I}_{n}^{(m)}}\ d\mu_{\Theta}}.
\end{align*}
Lemma \ref{gauss} in the case $n=1$ implies that the sum $\sum_{\varepsilon_i\in U/U^{(m)}}\chi(\varpi^{r_i}\varepsilon_{i})\psi(\varpi^{r_i}\varepsilon_{i})$ vanishes unless $r_{i}=-m$ for all $i$.

So let us assume for the rest of the proof that $r_{i}=-m$ for all $i$.
By the definition of the Gauss sum we get
\begin{align*}
\int_{G_n}{\chi \cdot \cf_{T_r I_{n}^{(m)}}\ d\mu_{\Theta}}
&= \tau(\chi)^{n} \psi(-\varpi^{-m})^{n}  \int_{G_n}{ \cf_{T_r \widetilde{I}_{n}^{(m)}}\ d\mu_{\Theta}}.
\end{align*}
The invariance property of Shalika functionals implies that the distribution
$$\lambda\circ\vartheta\circ\partial_{\rho^{\prime}}\colon C_{c}(M_{n}(F),\C)\too \C$$
is a multiple of the distribution $\psi\cdot dX$ for every $\rho^{\prime}\in \pi$.
Thus, the transformation law of $dX$ under linear transformations gives
\begin{align*}\psi(1-\varpi^{-m})^{n}  \int_{G_n}{ \cf_{T_r \widetilde{I}_{n}^{(m)}}\ d\mu_{\Theta}}
&=\psi(\varpi^{-m})^{n}  \lambda(\vartheta(\delta_{\rho}(\cf_{T_r \widetilde{I}_{n}^{(m)}})))\\
&=\psi(\varpi^{-m})^{n}  \lambda(\vartheta(\partial_{(1,T_{r})\rho}(\cf_{T_r \widetilde{I}_{n}^{(m)}})))\\
&= \alpha^{-m} \psi(\varpi^{-m})^{n} \lambda(\vartheta(\partial_{\rho}(\cf_{T_r \widetilde{I}_{n}^{(m)}})))\\
&= \alpha^{-m} q^{mn^{2}}\lambda(\vartheta(\partial_{\rho}(\cf_{\widetilde{I}_{n}^{(m)}})))\\
&= \alpha^{-m} q^{mn^{2}}[\widetilde{I}_{n}^{(1)}\colon\widetilde{I}_{n}^{(m)}]^{-1}\lambda(\vartheta(\partial_{\rho}(\cf_{\widetilde{I}_{n}^{(1)}})))
\end{align*}
and therefore, we can conclude that
\begin{align*}
\int_{G_n}{\chi \cdot \cf_{T_r I_{n}^{(m)}}\ d\mu_{\Theta}}
&= \alpha^{-m} q^{mn^{2}}[\widetilde{I}_{n}^{(1)}\colon\widetilde{I}_{n}^{(m)}]^{-1}\lambda(\vartheta(\partial_{\rho}(\cf_{\widetilde{I}_{n}^{(1)}})))\\
&= \tau(\chi)^{n} \left(\alpha\ q^{(n-n^{2})/{2}}\right)^{-m} q^{\frac{n^{2}+n}{2}} \lambda(\vartheta(\partial_{\rho}(\cf_{\widetilde{I}_{n}^{(1)}}))).
\end{align*}
\end{proof}

Now let us get back to the situation of Proposition \ref{integrals}.
In particular, we have fixed the unramified stabilization $\Theta^{\ur}=(\pi^{\ur},\rho^{\ur},\vartheta^{\ur})$ associated to the unramified irreducible principal series representation $V=\Ind^{G_{2n}}_{B_{2n}}(\chi_1,\ldots, \chi_{2n})$ with $\chi_{i}=\eta\chi_{2n-i+1}^{-1}$ for all $1\leq i\leq n$.
Hence we know that $$\alpha=\alpha_{\Theta^{\ur}}=\prod_{i=n+1}^{2n}\chi_i(\varpi).$$ As before, we write $\beta_{i}=\chi_{i}(\varpi)q^{n-i+1/2}=\alpha_{i}q^{n-i+1/2}$ for the Satake parameters of $V$.

\begin{Thm}\label{computation}
Let $\chi\colon F^{\ast}\to\C^{\ast}$ be a character of conductor $\mathfrak{f}(\chi)=\mathfrak{p}^{m}$. If the complex norm $\left|\chi(\varpi)\right|_{\infty}$ is sufficiently small, we have
\begin{align*}
E(\Theta^{\ur},\chi,1/2)=c\ \tau(\chi)^{n}\times\begin{cases}
\left(\alpha\ q^{(n-n^{2})/2}\right)^{-m} & \mbox{if}\ m\geq 1,\\
 \prod_{i=1}^{n}\frac{1-\beta_{n+i}^{-1}\chi(\varpi)^{-1}q^{-1/2}}{1-\beta_{n+i}\chi(\varpi)q^{-1/2}} & \mbox{if}\ m=0,\\
\end{cases}
\end{align*}
where $c$ is a non-zero rational constant independent of $\chi$.
\end{Thm}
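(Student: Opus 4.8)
The plan is to evaluate $E(\Theta^{\ur},\chi,1/2)$ directly from formula \eqref{evaluation} of Proposition \ref{integrals}, namely as the integral
\begin{align*}
\int_{M_{n}(F)}{\cf_{G_{n}}(X)\rho_{2}(X)\chi(\det(X))\left|\det(X)\right|^{-1/2}\psi^{-1}(\tr(X))\ dX},
\end{align*}
using the Iwasawa decomposition of $G_n$ to break this up into a sum of contributions indexed by the diagonal part. Write $g = k_1 T_r k_2$ with $k_1,k_2\in K_n$ and $T_r$ ($r=(r_1,\dots,r_n)$, $r_1\geq\dots\geq r_n$) the diagonal uniformizer matrix; since $\rho_2$ is the normalized spherical vector in an unramified principal series, $\rho_2(X)$ only depends on the $T_r$-part and equals the value of the character on $T_r$, which introduces the factors $\alpha_i$ (hence the Satake parameters $\beta_{n+i}$). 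The modulus character $|\det|^{-1/2}$ and the Haar measure contribute further powers of $q$ depending on $r$. The bulk of the argument is then to identify, for each coset $T_r I_n^{(m)}$ (or $T_r K_n$ when $\chi$ is unramified), the value of $\int_{G_n}\chi\cdot\cf_{T_r I_n^{(m)}}\,d\mu_{\Theta^{\ur}}$ --- and here Lemmas \ref{vanish}, \ref{gauss} and especially Lemma \ref{cond} do all the work: they tell us that the only surviving diagonal cosets are those with $r_i = -m$ for all $i$ when $m\geq 1$, and for $m=0$ one is reduced to a geometric-type sum over $r_1\geq\dots\geq r_n\geq 0$.

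\textbf{The ramified case ($m\geq 1$).} Here the computation is essentially already done: by Lemma \ref{cond}, in the sum over diagonal cosets only $r=(-m,\dots,-m)$ contributes, and Lemma \ref{vanish} (together with Corollary \ref{vanish2}) kills all matrices of too-negative order that are not captured by an Iwahori coset of the right level. What remains is to decompose $K_n = \bigsqcup T_{r'} I_n^{(m)} \cdot(\text{stuff})$ appropriately so that the integral over $G_n$ against $\chi\cdot|\det|^{-1/2}\cdot\rho_2$ collapses to a single term proportional to $\tau(\chi)^n(\alpha\,q^{(n-n^2)/2})^{-m}$ times $\lambda(\vartheta(\partial_\rho(\cf_{\widetilde I_n^{(1)}})))$; the latter quantity, being independent of $\chi$, is absorbed into the constant $c$ (one checks it is a non-zero rational number by a volume computation, since $\partial_\rho$ against a Shalika functional is a rational multiple of $\psi\cdot dX$). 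One must track the power of $q$ coming from $|\det(T_r)|^{-1/2} = q^{nm/2}$ and from the Jacobian of conjugation/measure normalization, and confirm it matches $q^{(n-n^2)/2\cdot(-m)}\cdot q^{\frac{n^2+n}{2}}$ up to the $\chi$-independent factor.

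\textbf{The unramified case ($m=0$).} Now there is no vanishing from $\psi$, and we genuinely sum over all dominant $r=(r_1\geq\dots\geq r_n\geq 0)$ (Corollary \ref{vanish2} forces $r_n\geq 0$; entries of $X$ must be integral). The integral over each $K_n\times K_n$-orbit of $T_r$ in $M_n(F)\cap G_n$ is a standard Hall-measure/Satake-type computation: it produces $\prod \alpha_{n+i}^{r_i}$ (from $\rho_2$) times $\chi(\varpi)^{\sum r_i}$ times $q^{-\frac12\sum r_i}$ times a volume factor, and summing the resulting multidimensional geometric series in $\chi(\varpi)q^{-1/2}$ gives a product of $n$ factors $\frac{1-\beta_{n+i}^{-1}\chi(\varpi)^{-1}q^{-1/2}}{1-\beta_{n+i}\chi(\varpi)q^{-1/2}}$ up to a $\chi$-independent rational constant (the convergence of the geometric series is exactly the hypothesis that $|\chi(\varpi)|_\infty$ be sufficiently small). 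This is, up to normalization, the Friedberg--Jacquet unramified computation \cite{FJ}, so one may either cite it or redo the sum. The main obstacle is bookkeeping: getting all the powers of $q$ from the modulus character, the Haar measures on $K_n$ and $M_n(F)$, and the volumes of the Iwahori cosets to cancel correctly so that the surviving constant is genuinely $\chi$-independent and rational --- this is where one must be most careful, but it is routine once the structure above is in place.
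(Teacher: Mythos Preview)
Your overall strategy matches the paper's, but there is a genuine gap in the unramified case and a missing step in the ramified case.

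\textbf{Unramified case.} You claim that Corollary \ref{vanish2} forces $r_n\geq 0$ (``entries of $X$ must be integral''), and then sum a geometric series over $r_i\geq 0$. This is wrong: for $m=0$ Corollary \ref{vanish2} only kills matrices with $\ord(A)<-\max(m,1)=-1$, so the integral is over $\mathcal{F}^{1}=\{A:\ord(A)\geq -1\}$, and the sum runs over $r_i\geq -1$. The contribution from $r_i=-1$ is exactly what produces the numerator $1-\beta_{n+i}^{-1}\chi(\varpi)^{-1}q^{-1/2}$; if you only summed $r_i\geq 0$ you would obtain $\prod_i(1-\beta_{n+i}\chi(\varpi)q^{-1/2})^{-1}$ and miss the modification factor entirely. (Relatedly, citing the Friedberg--Jacquet unramified computation does not help: that computes the zeta integral of the \emph{spherical} vector, which is the full $L$-factor, not the integral against $\delta_{\Theta^{\ur}}(\cf_{K_n})$.) In the paper's proof the $r_i=-1$ term contributes an extra $-\frac{1}{q-1}\alpha_{n+i}^{-1}\chi(\varpi)^{-1}q^{n}$ which combines with the geometric sum to give the claimed ratio. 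Also a minor slip: at $s=1/2$ the exponent of $|\det|$ is $s-1/2=0$, not $-1/2$.

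\textbf{Ramified case.} You say Lemma \ref{cond} and Lemma \ref{vanish} ``do all the work'' and that what remains is to ``decompose $K_n=\bigsqcup T_{r'}I_n^{(m)}\cdot(\text{stuff})$''. But Lemma \ref{cond} only evaluates $\int\chi\cdot\cf_{T_rI_n^{(m)}}\,d\mu_{\Theta}$; it says nothing about why the full integral over $\mathcal{F}^{m}$ reduces to this single Iwahori coset. The paper supplies the missing argument: write each coset in $\mathcal{F}^{m}/K_n$ via the Iwasawa decomposition as $T_r+N$ with $N$ strictly upper triangular with entries in $\mathfrak{p}^{-m}$, and then sum over $N$ using orthogonality of $\psi$. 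This sum forces $\nu(g_{ji})\geq m$ for $j>i$, i.e.\ cuts down from $K_n$ to $I_n^{(m)}$, after which the proof of Lemma \ref{cond} (not just its statement) shows only $r=(-m,\dots,-m)$ survives. Your proposal alludes to the right ingredients but does not identify this $\psi$-orthogonality step on the nilpotent part, which is the actual mechanism.
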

\begin{proof}
We first treat the case $m \geq 1$. 
By Corollary \ref{vanish2} and Proposition \ref{integrals} we have 
\begin{align}\begin{split}\label{somestuff}
E(\Theta^{\ur},\chi,1/2)&= \int_{G_n} \chi\ d\mu_{\Theta^{\ur}}=\int_{\mathcal{F}^{m}} \chi\ d\mu_{\Theta^{\ur}}\\
&=\int_{\mathcal{F}^{m}} \rho(X) \chi(\det (X)) \psi^{-1}(\tr (X))\ dX\\
&=\hspace{-0,5em}\sum_{A\in \mathcal{F}^{m}/K_n} \int_{A K_n} \rho(X) \chi(X) \psi^{-1}(\tr(X))\ dX
, \end{split}
\end{align}
where $\mathcal{F}^{m}\subset G_n$ is the set of matrices $A$ with $\ord(A)\geq -m$.
By the Iwasawa decomposition every coset $AK_n\in \mathcal{F}^{m}/K_n$ has a representative of the form $T_{r(AK_n)} +N$ with $r(A)\in \Z^{n}$ and $N$ a nilpotent upper triangular matrix with entries in $\p^{-m}$.
The diagonal matrix $T_{r(AK_n)}$ is uniquely determined by the coset $AK_n$. For a fixed $r\in \Z^{n}$ we define
$$\mathcal{F}_{r}^{m}=\left\{A\in\mathcal{F}^{m}\mid r(AK_n)=r \right\}.$$
A complete set of representatives of $\mathcal{F}_{r}^{m}/K_n$ is given by $\left\{T_{r}+N\right\}$, where $N$ are nilpotent upper triangular matrices with entries $N_{ij}$ running through a set of representatives of $\mathfrak{p}^{-m}/\mathfrak{p}^{r_{i}}$ for all $j>i$.
If we let the $N_{ij}$ run through a set of representatives of $\mathfrak{p}^{-m}/\mathfrak{p}^{r_{i}+m}$ instead, we get exactly $q^{m(n^{2}-n)/2}$ representatives for each equivalence class in $\mathcal{F}_{r}^{m}$.
Therefore, we get
\begin{align*}
&\sum_{A\in\mathcal{F}_{r}^{m}/K_n}\int_{K_{n}}{\chi(\det(Ag))\psi(\tr(Ag))d^{\ast}g}\\
=&\frac{1}{q^{m(n^{2}-n)/2}}\sum_{N}\int_{K_{n}}{\chi(\det(T_{r}g))\psi(\tr(T_{r}g))\psi(\tr(Ng))d^{\ast}g}\\
=&\frac{1}{q^{m(n^{2}-n)/2}}\int_{K_{n}}{\chi(\det(T_{r}g))\psi(\tr(T_{r}g))\sum_{N}\psi(\tr(Ng))d^{\ast}g}.
\end{align*}
By orthogonality of characters the sum
\begin{align*}
\sum_{N}\psi(\tr(Ng))&=\sum_{N_{ij}\in\mathfrak{p}^{-m}/\mathfrak{p}^{r_{i}+m}} \prod_{j>i} \psi(N_{ij}g_{ji})\\
&=\prod_{j>i} \sum_{N_{ij}\in\mathfrak{p}^{-m}/\mathfrak{p}^{r_{i}+m}} \psi(N_{ij}g_{ji})
\end{align*}
is zero unless $\nu(g_{ji})\geq m$ for all $j>i$.
Therefore, using the proof of Lemma \ref{cond} most of the terms in \eqref{somestuff} vanish and we are left with
\begin{align*}
E(\Theta^{\ur},\chi,1/2)= \int_{G_n}{\chi \cdot \cf_{T_{(-m,\ldots,-m)}} I_{n}^{(m)}\ d\mu_{\Theta}}.
\end{align*}
The claim now follows by using Lemma \ref{cond} once again.

\noindent Now let $\chi$ be unramified. Corollary \ref{vanish2} and Proposition \ref{integrals} give
\begin{align*}
E(\Theta^{\ur},\chi,1/2)&=\int_{\mathcal{F}^{1}}\chi\ d\mu_{\Theta^{\ur}}=\int_\mathcal{F}^{1} \rho(X) \chi(\det (X)) \psi^{-1}(\tr (X))\ dX
\end{align*}
Invoking the Iwasawa decomposition and the comparison between multiplicative and additive Haar measures we get
\begin{align*}
E(\Theta^{\ur},\chi,1/2)=\lim_{k\to\infty} \sum_{\substack{(r_{i})\in\Z^{n}\\ -1\leq r_{i}\leq k}} \prod_{i=1}^{n}q^{-r_{i}n}\alpha_{n+i}^{r_{i}}\chi(\varpi^{r_{i}})\sum_{A \in\mathcal{F}^{1}_{r}} \int_{K_{n}}{\psi(\tr(Ag))\ d^{\ast}g}.
\end{align*}
As in the case $m\geq 1$ we see that
\begin{align*}
\sum_{A \in\mathcal{F}^{1}_{r}} \int_{K_{n}}{\psi(\tr(Ag))\ d^{\ast}g}=\prod_{i=1}^{n}q^{(1+r_{i})(n-i)} \int_{I^{(1)}_n}{\psi (\tr(T_{r}g))\ d^{\ast}g}.
\end{align*}
The integral on the right hand side of the equality is equal to $\vol(I^{(1)}_n)$ if $r_{i}\geq 0$ for all $1\leq i\leq n$.
Otherwise a computation with diagonal matrices like before shows that
\begin{align*}
\int_{I^{(1)}_n}{\psi (\tr(T_{r}g)\ d^{\ast}g}=\vol(I^{(1)}_n) \prod^{n}_{\substack{i=1\\r_{i}=-1}}\left(-\frac{1}{q-1}\right).
\end{align*}
Therefore, we get that
\begin{align*}
E(\Theta^{\ur},\chi,1/2)&=\vol(I^{(1)}_n)\prod_{i=1}^{n}{-\frac{1}{q-1}\alpha_{n+i}^{-1}\chi(\varpi)^{-1}q^n+q^{n-i}\sum_{k=0}^{\infty}{(\alpha_{n+i}\chi(\varpi)q^{-i})^{k}}}\\
&=\vol(I^{(1)}_n)\prod_{i=1}^{n}{-\frac{1}{q-1}\alpha_{n+i}^{-1}\chi(\varpi)^{-1}q^n +q^{n-i}\frac{1}{1-\alpha_{n+i}\chi(\varpi)q^{-i}}}\\
&=\vol(I^{(1)}_n)\prod_{i=1}^{n}\frac{q^{n-i+1}}{q-1}\frac{1-\alpha_{n+i}^{-1}\chi(\varpi)^{-1}q^{i-1}}{1-\alpha_{n+i}\chi(\varpi)q^{-i}}\\
&=\vol(I^{(1)}_n)[U:U^{(1)}]^{-n}q^{\frac{n^{2}+n}{2}} \prod_{i=1}^{n}\frac{1-\beta_{n+i}^{-1}\chi(\varpi)^{-1}q^{-1/2}}{1-\beta_{n+i}\chi(\varpi)q^{-1/2}}.
\end{align*}
The constants appearing in the all cases are equal, which proves the claim.
\end{proof}

\end{subsection}
\begin{subsection}{The semi-local case}\label{semilocal}
All the previous constructions can be easily generalized to the semi-local case.
We change our notations slightly.
Let $F_{1},\ldots,F_{g}$ be finite extensions of $\Q_{p}$. We put $F=F_{1}\times\ldots\times F_{g}$ and $\mathcal{O}=\mathcal{O}_{F_{1}}\times\ldots\times\mathcal{O}_{F_{g}}$.
For every $r\in \N$ we define $G_{r}=\GL_{r}(F)$ and $K_{r}=\GL_{r}(\mathcal{O})$.

Further, we fix a subfield $E\subset \C_p$ which is a finite extension of $\Q_{p}$ such that every embedding of every $F_i$ into $\C_p$ factors through $E$ for all $1\leq i \leq g$ (or, an arbitrary field $E$ of characteristic $0$ in the smooth case).
\begin{Def}\label{semistab}
Let $V_i$ be irreducible locally $\Q_p$-rational $\GL_{2n}(F_{i})$-representations on $E$-vector spaces for every $1\leq i \leq g$.
\begin{enumerate}[(i)]
\item A stabilization $\Theta$ of the $G_{2n}$-representation $$V=\bigotimes_{i=1}^{g} V_i$$ is a tuple $\Theta=(\Theta_i)_{1\leq i \leq g}$, where $\Theta_i$ is a stabilization of $V_i$ for $1\leq i \leq g$.
\item A stabilization $\Theta$ of $V$ is weakly ordinary if each of the $\Theta_i$ is weakly ordinary.
\item A critical point $V_s$ of $V$ is a tensor product of the form
\begin{align*}V_s=\bigotimes_{i=1}^{g}V_{s,i},\end{align*}
where $V_{s,i}$ is a critical point of $V_i$ for all $1\leq i\leq g$.
\end{enumerate}
\end{Def}
\noindent The map given by
\begin{align*}
\bigotimes_{i=1}^{r}C_c^{0}(\GL_{n}(F_{k}),E)&\too C_c^{0}(G_n,E),\\
f_1\otimes\ldots\otimes f_g &\mapstoo [(g_1,\ldots,g_n)\mapsto f_1(g_1)\cdot\ldots\cdot f_n(g_n)] 
\end{align*}
gives an isomorphism of $G_n\times G_n$-representations.
Therefore, every data of a stabilization $\Theta$ and a critical point $V_s$ of a representation $V$ gives rise to a map
\begin{align}\label{semidelta}
\delta_{\Theta,s}\colon C_c^{0}(G_n,E)\otimes V_{s}\mapstoo V.
\end{align} 
If $V_i$ is smooth for all $1 \leq i \leq g$, then $V_{s}$ is automatically trivial.
All the local results from Section \ref{int} carry over verbatim to the semi-local case.

Suppose we are in the smooth case. Let $K_{\Theta}=K_{\Theta,1}\times\cdots\times K_{\Theta,g}\subset K_{n}$ be the maximal open subgroup such that $\delta_{\Theta}$ is $1\times K_{\Theta}$-equivariant.
For an arbitrary ring $R$, which contains the central character $\omega$ of $V_{\sm}$, and an $R$-module $N$ we define
\begin{align*}
IC_{\omega}^{0}(K_{n},R)=\cind_{Z(K_{n}\times K_{\Theta})}^{G_n\times G_{n}} \omega\otimes C_c^{0}(K_n,R)
\end{align*}
and
\begin{align*}
\iDist(K_{n},N)=\cind_{Z(K_{n}\times K_{\Theta})}^{G_n\times G_{n}} \omega^{-1}\otimes \Dist(K_n,N).
\end{align*}
where $Z$ is center of $G_{2n}$, which we view as a subgroup of $G_n\times G_{n}$ via the diagonal embedding.
By Frobenius reciprocity the map $\delta_{\Theta}$ induces a map
\begin{align}\label{dualdos}
IC_{\omega}^{0}(K_{n},E)\mapstoo V,
\end{align}
which we also denote by $\delta_{\Theta}$.

\end{subsection}

\end{section}
\color[rgb]{0,0,0}
\begin{section}{The global distribution}\label{global}
We will use the following notations throughout the rest of the article.
We fix a totally real algebraic number field $F$ of degree $d$ with ring of integers $\mathcal{O}$. For a non-zero ideal $\mathfrak{a}\subset\mathcal{O}$ we set $N(\mathfrak{a})=\sharp (\mathcal{O}/\mathfrak{a})$.
Given a place $l$ of $\Q$ we denote by $S_{l}$ the set of places of $F$ above $l$.
Let $\sigma_{1},\ldots,\sigma_{d}$ denote the distinct embeddings of $F$ into $\R$ and $\infty_{1},\ldots,\infty_{d}$ the corresponding Archimedean places.
Via the fixed embedding $\iota_{\infty}\colon\overline{\Q}\to\C$ we can and will view the $\sigma_{i}$ as embeddings into $\overline{\Q}$.

If $v$ is a place of $F$, we denote by $F_{v}$ the completion of $F$ at $v$.
If $\mathfrak{q}$ is a finite place, we let $\mathcal{O}_{\mathfrak{q}}$ denote the valuation ring of $F_{\mathfrak{q}}$ and $\ord_{\mathfrak{q}}$ the additive valuation such that $\ord_{\mathfrak{q}}(\varpi)=1$ for any local uniformizer $\varpi\in\mathcal{O}_{\mathfrak{q}}$.
For an arbitrary place let $|\cdot|_{v}$ be the normalized multiplicative norm, i.e. $|\cdot|_{\infty_{i}}=|\sigma_{i}(\cdot)|$ for $i=1,\ldots,d$ and $|\cdot|_{\mathfrak{q}}=N(\mathfrak{q})^{-\ord_{q}(\cdot)}$ if $\mathfrak{q}$ is a finite place.
We denote by $U_{v}$ the invertible elements of $\mathcal{O}_{v}$ if $v$ is a finite place and the group of positive elements of $F_{v}$ if $v$ is a real place.
For a finite place $\mathfrak{q}$ we let $U^{(m)}_{\mathfrak{p}}=\left\{x\in U_{\mathfrak{q}}\left|x\equiv 1 \mod \mathfrak{q}^{m}\right.\right\}$.

Let $\A$ be the ring of adeles of $F$ and $\I$ the idele group of $F$.
We denote by $|\cdot|\colon\I\to\R^{\ast}$ the absolute modules, i.e.~$|(x_{v})_{v}|=\prod_{v}{|x_{v}|_{v}}$ for $(x_{v})_{v}\in\I$.
For a finite set $S$ of places of $F$ we define the "$S$-truncated adeles" $\A^{S}$ (resp.~"$S$-truncated ideles" $\I^{S}$) as the restricted product of all completions $F_{v}$ (resp.~$F_{v}^{\ast}$) with $v\notin S$ and put $F_{S}=\prod_{v\in S}{F_{v}}$.
We also set $U_{S}=\prod_{v\in S}U_{v}$ and $U^{S}=\prod_{v\notin S}U_{v}$ and similarly we define $U^{(m)}_{S}$.
If $I$ is a finite set of places of $\Q$, we often write $\A^{I}$ instead of $\A^{\cup_{v\in I}S_{l}}$, $U_{I}$ instead of $\prod_{v\in I}U_{S_{l}}$ etc.

A (left) Haar measure of a locally compact group $G$ will be denoted by $dg$.
We fix a non-trivial character $\psi\colon\A\to S^{1}$ which is trivial on $F$.
For a place $v$ let $\psi_{v}$ be the restriction of $\psi$ to $F_{v}\subset\A$.
We assume that the conductor of $\psi_{\mathfrak{p}}$ is $\mathcal{O}_{\mathfrak{p}}$ for all places $\mathfrak{p}\in S_p$.
Let $dx$ (resp.~$dx_{v}$) denote the self-dual Haar measure of $M_{r}(\A)$ (resp.~$M_{r}(F_{v})$) associated to the character $\psi\circ\tr$ (resp.~$\psi_{v}\circ\tr$).
It follows that $dx=\prod_{v}dx_{v}$. We normalize the multiplicative Haar measure $d^{\ast}x_{v}$ on $\GL_{r}(F_{v})$ by $d^{\ast}x_{v}=m_{v}\frac{dx_{v}}{|x_{v}|_{v}}$, where $m_{v}=1$ if $v$ is real and $m_{v}$ is chosen such that $\GL_r(O_{v})$ has volume $1$ if $v$ is finite.
 For a linear algebraic group $\mathbb{G}$ over $F$, a character $\chi\colon \mathbb{G}(\A)\to\C^{\ast}$ and a place $v$ we let $\chi_{v}\colon \mathbb{G}(F_{v})\hookrightarrow \mathbb{G}(\A)\stackrel{\chi}{\longrightarrow}\C^{\ast}$ be the local component of $\chi$ at $v$.
Further, we write $P\mathbb{G}$ for the quotient of $\mathbb{G}$ by its center.
Given an algebraic character $\dc\colon \mathbb{G}\to \mathbb{G}_{m,F}$ we denote by $\mathbb{G}(F_{\infty})^{+}\subset \mathbb{G}(F_{\infty})$ the subgroup of elements which have totally positive image under $\dc$.
Similarly, we define $\mathbb{G}(F)^{+}$ etc. In the case $\mathbb{G}\subset\mathbb{GL}_{r}$ the superscript $+$ is always meant with respect to the determinant.
Finally, we write $\mathcal{G}_{p}$ for the Galois group of the maximal abelian extension of $F$ unramified outside $p$ and $\infty$.
\begin{subsection}{Shalika models}
In this section we recall the basics on global Shalika models and their connection to $L$-functions. The main reference is \cite{FJ}.
Until the end of the article we denote by $G$ the algebraic group $\mathbb{GL}_{2n}$ and by $Z$ its center. We write $B$ for be the Borel subgroup of upper triangular matrices in $G$.
We view $H=\mathbb{GL}_{n}\times\mathbb{GL}_{n}$ as an algebraic subgroup of $G$ via the diagonal embedding. For $m_1, m_2\in \Z$ we define the morphism of algebraic groups
$$\det^{m_{1},m_{2}}\colon H\too\mathbb{G}_{m},\ (g_{1},g_{2})\mapstoo\det(g_{1})^{m_{1}}\det(g_{2})^{m_{2}}.$$
The Shalika subgroup $S$ of $G$ is defined as
\begin{align*}
S=\left\{\left.\begin{pmatrix}h&0\\0&h\end{pmatrix}\begin{pmatrix}1_n&X\\0&1_n\end{pmatrix}\right| h\in GL_{n}, X\in M_{n}\right\}.
\end{align*}
For the rest of this article we fix a continuous character $\eta\colon \I/F^{\ast}\to \C^{\ast}$.
It induces a character $\eta\psi\colon S(\A)\to \C^{\ast}$ via
\begin{align*}
\begin{pmatrix}h&0\\0&h\end{pmatrix}\begin{pmatrix}1_n&X\\0&1_n\end{pmatrix}\mapsto \eta(\det(h))\psi(\tr(X)).
\end{align*}
Let $V=\otimes_{v} V_{v}$ be a cuspidal automorphic representation of $G(\A)$ with central character $\omega=\eta^{n}$.
\begin{Def}
The cuspidal representation $V$ has a (\textit{global}) $(\eta,\psi)$-\textit{Shalika model} if there exist $\Phi\in V$ and $g\in G(\A)$ such that the following integral does not vanish: 
\begin{align*}
\Xi_{\Phi}(g)=\int_{Z(\A)S(F)\backslash S(\A)}{(\pi(g)\Phi)(s)(\eta\psi(s))^{-1}ds}.
\end{align*}
\end{Def}
The integral is well-defined since $\Phi$ is a cusp form.
The global Shalika functional $\Lambda\colon V\to \C$ is defined by $\Lambda(\Phi)=\Xi_{\Phi}(1)$. By a simple change of variables we see that $\Lambda(s\Phi)=\eta\psi(s)\Lambda(\Phi)$ holds for all $s\in S(\A)$ and $\Phi\in V$.
We will assume from now on that $V$ has a $(\eta,\psi)$-Shalika model.

\begin{Exa}
\begin{enumerate}[(i)]\thmenumhspace
\item If $n=1$, a Shalika functional is the same as a Whittaker functional. Thus, every cuspidal automorphic representation of $GL_{2}(\A)$ has a Shalika model.
\item Let $V$ be a cuspidal representation of $\GL_{2}(\A)$, which is neither of dihedral nor of tetrahedral type.
Then, by the work of Kim and Shahidi (cf.~\cite{KS}) the symmetric cube lift $\Pi=\Sym^{3}(V)$ is cuspidal.
It is well-known that in this case $\Pi$ has a Shalika model (see for example \cite{GRG}, Proposition 8.1.1). 
\end{enumerate}
\end{Exa}

\begin{Pro}\label{globzeta}
Let $f\colon \I/F^{\ast}\to\C$ be a locally constant function, $\Phi$ an element of $V$ and $s\in\C$.
Then the integral
\begin{align*}
\Psi(\Phi,f,s)=\int_{Z(\A)H(F)\backslash H(\A)}\hspace{-1,2em}{\Phi(h)\left|\det^{1,-1}(h)\right|^{s-1/2}f\left(\det^{1,-1}(h)\right)\eta^{-1}(\det^{0,1}(h))dh}
\end{align*}
converges absolutely and defines an holomorphic function in $s$.
If $\mathfrak{Re}(s)$ is sufficiently large, it equals the following absolutely convergent integral:
\begin{align*}
Z(\Phi,f,s)=\int_{GL_{n}(\A)}{\Xi_{\Phi}\left(\begin{pmatrix}g&0\\0&1_n\end{pmatrix}\right)}f(\det(g))\left|\det(g)\right|^{s-1/2}d^{\ast}g.
\end{align*}
\end{Pro}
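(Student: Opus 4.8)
The statement is the global zeta-integral unfolding of Friedberg--Jacquet, and I would prove it along the lines of \cite{FJ}, in two steps. For the analytic part the only input is that the cusp form $\Phi$ is rapidly decreasing on $Z(\A)G(F)\backslash G(\A)$. Covering $Z(\A)H(F)\backslash H(\A)$ by a Siegel domain for $H=\mathbb{GL}_n\times\mathbb{GL}_n$ and observing that $\left|\det^{1,-1}(h)\right|^{s-1/2}$ grows at most polynomially in the height of $h$ along this domain while $\left|f\right|$ and $\eta$ are bounded, the rapid decrease of $\Phi$ dominates the integrand. This yields absolute convergence of $\Psi(\Phi,f,s)$ for every $s\in\C$, locally uniformly in $s$; holomorphy then follows from Morera's theorem together with dominated convergence, the integrand being entire in $s$.

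For the identity $\Psi=Z$ when $\mathfrak{Re}(s)$ is large, the central point is the formula
\begin{align*}
\Xi_{\Phi}\begin{pmatrix}a&0\\0&1_n\end{pmatrix}=\int_{Z(\A)\GL_n(F)\backslash \GL_n(\A)}\eta(\det h)^{-1}\,\Phi_{1_n}\begin{pmatrix}ha&0\\0&h\end{pmatrix}\,dh\qquad(a\in\GL_n(\A)),
\end{align*}
where for $Y\in M_n(F)$ we write $\Phi_{Y}(g)=\int_{N_n(F)\backslash N_n(\A)}\Phi\left(\begin{pmatrix}1_n&X\\0&1_n\end{pmatrix}g\right)\psi(\tr(XY))^{-1}\,dX$ for the Fourier coefficient of $\Phi$ along the unipotent radical $N_n$ of $P_n$ attached to the character $\begin{pmatrix}1_n&X\\0&1_n\end{pmatrix}\mapsto\psi(\tr(XY))$. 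This formula is obtained by writing the defining integral of $\Xi_{\Phi}$ over $Z(\A)S(F)\backslash S(\A)$ according to the semidirect decomposition $S=S_0\ltimes N_n$ of the Shalika group into its Levi $S_0\cong\mathbb{GL}_n$ and $N_n\cong M_n$, and then carrying out the substitution $X\mapsto hXh^{-1}$ in the unipotent variable; one checks that the resulting integrand is well defined on the $Z(\A)\GL_n(F)$-quotient using the central character of $\Phi$ together with the left $S_0(F)$-invariance of $\Phi_{1_n}$. Plugging this into the definition of $Z(\Phi,f,s)$, using that $f$ and $\left|\cdot\right|$ factor through $\I/F^{\ast}$, and identifying $Z(\A)\Delta\GL_n(F)\backslash H(\A)\cong\GL_n(\A)\times\bigl(Z(\A)\GL_n(F)\backslash\GL_n(\A)\bigr)$ via $(g_1,g_2)\mapsto(g_2^{-1}g_1,g_2)$, where $\Delta\GL_n$ denotes the diagonal, one gets
\begin{align*}
Z(\Phi,f,s)=\int_{Z(\A)\Delta\GL_n(F)\backslash H(\A)}\Phi_{1_n}(h)\left|\det^{1,-1}(h)\right|^{s-1/2}f(\det^{1,-1}(h))\,\eta^{-1}(\det^{0,1}(h))\,dh.
\end{align*}
Since $\mathbb{GL}_n(F)\times\mathbb{GL}_n(F)$ acts transitively on the open orbit $\{\det Y\neq 0\}$ in $M_n(F)$ with stabilizer of $1_n$ equal to $\Delta\GL_n(F)$, and since $\Phi_Y$ transforms via this action under $H(F)$, folding the $\Delta\GL_n(F)$-quotient back up to $H(F)$ turns this into $\int_{Z(\A)H(F)\backslash H(\A)}\bigl(\sum_{\det Y\neq0}\Phi_Y(h)\bigr)(\cdots)\,dh$; all interchanges of summation and integration are legitimate for $\mathfrak{Re}(s)$ large by the local bound \cite{FJ}, Proposition~3.1, together with the standard Rankin--Selberg gauge estimate, which also gives the asserted absolute convergence of $Z(\Phi,f,s)$. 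Comparing with $\Psi(\Phi,f,s)=\int_{Z(\A)H(F)\backslash H(\A)}\Phi(h)(\cdots)\,dh$ and invoking the full Fourier expansion $\Phi=\sum_{Y\in M_n(F)}\Phi_Y$, it remains to see that the terms with $\det Y=0$ contribute nothing.

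The one genuinely nontrivial point is this last vanishing. The term $Y=0$ is the constant term of $\Phi$ along the parabolic $P_n$ and is zero by cuspidality. For $Y$ of intermediate rank $r$ with $1\le r\le n-1$, the stabilizer of a representative inside $\mathbb{GL}_n(F)\times\mathbb{GL}_n(F)$ contains the unipotent radical of a proper parabolic of $\mathbb{GL}_n$; unfolding the corresponding orbit and integrating $\Phi_Y$ first over that unipotent subgroup produces, after a further Fourier expansion, a constant term of $\Phi$ along a proper parabolic of $\mathbb{GL}_{2n}$, hence again zero. This orbit-by-orbit vanishing, which uses cuspidality in an essential way beyond the vanishing of the single constant term along $P_n$, is precisely the content of the unfolding lemma of \cite{FJ}, and I would reproduce it here; the remaining ingredients --- the semidirect-product bookkeeping, the changes of variables, and the convergence estimates --- are routine.
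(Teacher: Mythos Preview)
Your argument is correct and is essentially a sketch of the Friedberg--Jacquet unfolding itself. The paper, however, does not reproduce this argument: its proof is three lines. It first observes that when $f=\chi$ is a character and $V$ is unitary the statement is literally \cite{FJ}, Proposition~2.3; it then remarks that one may always assume $V$ unitary by twisting with a suitable power of $\left|\det\right|$, and that an arbitrary locally constant $f$ on $\I/F^{\ast}$ is a finite linear combination of characters, so the general case follows by linearity. What you gain by writing out the unfolding is self-containedness and the observation that nothing in the argument actually requires $f$ to be a character; what the paper gains is brevity and a clean separation between the new content of the article and the classical input from \cite{FJ}.
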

\begin{proof}
If $f=\chi\colon\I/F^{\ast}\to\C^{\ast}$ is a character and $V$ is unitary, this is precisely Proposition 2.3 of \cite{FJ}.
By twisting with a character we may assume that $V$ is unitary. Since the locally constant characters form a basis of $C^{0}(\I/F^{\ast},\C)$ the claim follows.
\end{proof}

\noindent The global Shalika functional factors as a product of local Shalika functionals in the following sense:
There exist non-zero functionals $\lambda_{v} \colon V_{v} \to \C$ for every place $v$ of $F$ such that $\Lambda(\Phi)=\prod_{v}{\lambda_{v}(\varphi_{v})}$ holds for all pure tensors $\Phi=\otimes_{v}\varphi_{v}\in V=\otimes V_{v}$.
Hence, we have the equality $\Xi_{\Phi}=\prod_{v}{\xi_{\varphi_{v}}^{\lambda_{v}}}$ (see \ref{density} for the definition of ${\xi_{\varphi_{v}}^{\lambda_{v}}}$).
Moreover, we have $\lambda_{v}(s_{v}\varphi_{v})=\eta_{v}\psi_{v}(s_{v})\lambda_{v}(\varphi_{v})$ for all $\varphi\in V_{v}$ and $s_{v}\in S(F_{v})$.
Thus, $\lambda_{v}$ is a local Shalika functional of $V_{v}$ as in Definition \ref{shalika} for every finite place $v$.

\begin{Pro}\label{loczeta} Let $v$ be a finite place, $\varphi_{v}\in V_{v}$ and $\chi_{v}\colon F_{v}^{\ast}\to\C^{\ast}$ a character.
Then for every complex number $s\in\C$ with sufficiently large real part the following local zeta integral converges absolutely:
\begin{align*}
\zeta_{v}(\varphi_{v},\chi_{v},s)=\int_{GL_{n}(F_{v})}{\xi^{\lambda_{v}}_{\varphi_{v}}\left(\begin{pmatrix}g&0\\0&1_n\end{pmatrix}\right)}\chi_{v}(\det(g))\left|\det(g)\right|_{v}^{s-1/2}d^{\ast}g.
\end{align*} 
There exists $\varphi_{v}\in V_{v}$ such that $L(V_{v}\otimes\chi_{v},s)=\zeta_{v}(\varphi_{v},\chi_{v},s)$ holds for $\mathfrak{Re}(s)$ large enough and all unramified characters $\chi_{v}\colon F^{\ast}\to \C^{\ast}$.
Moreover, if $V_{v}$ is unramified, this equality holds for a spherical vector.
By our choice of Haar measure on $\GL_{2n}(F_{v})$ we can choose the normalized spherical vector for almost all $v$.
\end{Pro}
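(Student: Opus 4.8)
The plan is to recognize the local zeta integral $\zeta_v(\varphi_v,\chi_v,s)$ as the well-studied Friedberg--Jacquet integral attached to the local Shalika model, so that the assertion becomes a citation of the local theory in \cite{FJ}. First I would recall that, by the factorization of the global Shalika functional, the function $g\mapsto\xi^{\lambda_v}_{\varphi_v}(g)$ ranges over (a dense subspace of) the local $(\eta_v,\psi_v)$-Shalika model of $V_v$ as $\varphi_v$ varies over $V_v$; in particular $\zeta_v(\varphi_v,\chi_v,s)$ is exactly the local integral $\int_{\GL_n(F_v)} W\!\left(\begin{pmatrix} g&0\\0&1_n\end{pmatrix}\right)\chi_v(\det g)\left|\det g\right|_v^{s-1/2}\,d^\ast g$ for $W$ in the Shalika model. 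Convergence for $\mathfrak{Re}(s)$ large is then Proposition 3.1 of \cite{FJ} (the same statement already invoked in the proof of Lemma \ref{localdst}), applied after twisting by $\chi_v$, which is harmless since twisting the representation by $\chi_v\circ\det$ shifts the Shalika data in the way recorded in Remark \ref{twistSha}\eqref{twistSha1}.

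Next I would treat the identity with the $L$-factor. The content of \cite{FJ} is that the family of integrals $\zeta_v(\varphi_v,\chi_v,s)$, as $\varphi_v$ runs over $V_v$, spans a fractional ideal of $\C[q_v^{s},q_v^{-s}]$ whose generator (suitably normalized) is $L(V_v\otimes\chi_v,s)$; in particular there is a single vector $\varphi_v$ realizing the $L$-factor on the nose for all unramified $\chi_v$ simultaneously — this last point uses that varying $\chi_v$ among unramified characters only reparametrizes $q_v^{-s}$, so a vector achieving the generator for one unramified twist achieves it for all. When $V_v$ is itself unramified, the explicit unramified computation of Friedberg--Jacquet shows that the normalized spherical vector already gives $\zeta_v(\varphi_v^\circ,\chi_v,s)=L(V_v\otimes\chi_v,s)$; here one needs the local Haar measure on $\GL_{2n}(F_v)$ to be the one giving $K_{2n}$ volume one (which is exactly the normalization fixed in this section), and one checks that our fixed additive character $\psi_v$ has conductor $\mathcal{O}_v$ for almost all $v$, which is the standard unramifiedness hypothesis in the local unramified computation. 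This yields the final sentence, since $V_v$ is unramified and $\psi_v$ has the right conductor for all but finitely many $v$.

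The main obstacle is not any genuinely new argument but the bookkeeping of normalizations: one must match the Shalika-functional normalization used here with the one in \cite{FJ}, keep track of the modulus character and the shift $s-1/2$, and verify that the measure and additive-character conventions fixed in Section \ref{global} agree with those under which the unramified computation of Friedberg--Jacquet produces precisely $L(V_v\otimes\chi_v,s)$ with no extra constant. Once these conventions are aligned, the proposition is an immediate translation of \cite{FJ}, Propositions~3.1 and~3.2 (together with the explicit unramified formula), exactly as in the already-cited Lemma \ref{localdst}.
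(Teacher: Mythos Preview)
Your proposal is correct and matches the paper's own proof: both reduce the statement to Propositions~3.1 and~3.2 of \cite{FJ}, with the non-unitary case handled by twisting by an appropriate character. The paper's proof is a two-line citation, so your expanded discussion of the fractional-ideal description and the normalization bookkeeping is more detailed than necessary but entirely in the same spirit.
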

\begin{proof}
See Proposition 3.1 and Proposition 3.2 of \cite{FJ} for the unitary case.
The non-unitary case follows by twisting with an appropriate character.
\end{proof}

\end{subsection}
\begin{subsection}{The global distribution}\label{globaldist}
The goal of this section is to construct the global distribution and show that it fulfills the right interpolation property in the nearly spherical case.

Let $V=\otimes_v V_{v}$ be a cuspidal automorphic representation of $G(\A)$ having a $(\eta,\psi)$-Shalika model.
Further, we assume that we have given a stabilization $\Theta$ of the semi-local representation $V_p=\otimes_{\p\in S_p}V_{\p}$.
Finally, we fix a finite set $\Sigma$ of finite places of $F$, which is disjoint from $S_{p}$.

For every integer $m\geq 1$ we define $\Phi^{\infty}_{m,\Sigma}=\otimes_{v\nmid\infty}\varphi_{m,\Sigma,v}\in\otimes_{v\nmid\infty}V_{v}$ to be the following pure tensor: 
\begin{itemize}
\item Case $v\notin S_p\cup \Sigma$: $\varphi_{v}=\varphi_{m,\Sigma,v}$ is chosen as in the end of Proposition \ref{loczeta}.
Especially, it is independent of $m$.
\item Case $v\in\Sigma$: $\varphi_{v,\Sigma}=\varphi_{m,\Sigma,v}$ is chosen such that
$$\zeta_{v}(\varphi_{v,\Sigma},\chi_{v},s)=1$$
holds for all $s$ and all unramified characters $\chi_{v}$.
See Section 3.9.3 of \cite{GRG} for an explicit construction of such a vector.
\item Case $\mathfrak{p}\in S_{p}$:
we define $\varphi_{m,\mathfrak{p}}=\varphi_{m,\Sigma,\mathfrak{p}}=\left[GL_{n}(\mathcal{O}_{\mathfrak{p}}):K^{(m)}_{n,\mathfrak{p}}\right]\cdot\delta_{\Theta_\p}(\cf_{K^{(m)}_{n,\mathfrak{p}}})$, where $K^{(m)}_{n,\mathfrak{p}}$ is the $m$-th principle congruence subgroup of $GL_{n}(\mathcal{O}_{\mathfrak{p}})$.
\end{itemize}
The choice of a vector $\Phi_{\infty}$ at infinity will be discussed at the end of section \ref{cohom}.
For now, we fix an arbitrary vector $\Phi_{\infty}=\otimes\varphi_{v}\in \otimes_{v|\infty} V_{v}$ and put $\Phi_{m,\Sigma}=\Phi^{\infty}_{m,\Sigma}\otimes \Phi_{\infty}$. 
If $f\colon\I/F^{\ast}\to\C$ is a locally constant function, there exists some integer $m\geq 1$ such that $f$ factors through $\I/F^{\ast}U^{(m)}_{S_{p}}$.
For every $s\in\C$ the integral
$$
\int_{\I/ F{\ast}}{f(x)|x|^{s}\mu_{\Theta_{\Sigma}}(dx)}:=\Psi(\Phi_{m,\Sigma},f,s+1/2)
$$
converges absolutely by Proposition \ref{globzeta} and defines a holomorphic function in $s$.
It is easy to see that the integral is independent of the choice of $m$.

By class field theory the Artin map $\rec\colon\I/F^{\ast}\to\mathcal{G}_{p}$ is continuous and surjective.
Hence, for every $s\in\C$ we can define a distribution $\mu_{\Theta_{\Sigma},s}\in \Dist(\mathcal{G}_{p},\C)$ by
$$
\int_{\mathcal{G}_{p}}{f(\gamma)\mu_{\Theta_{\Sigma},s}(d\gamma)}=\int_{\I/ F{\ast}}{f(\rec(x))|x|^{s}\mu_{\Theta_{\Sigma}}(dx)}
$$
for all $f\in C^{0}(\mathcal{G}_{p},\C)$.
In the following, we always identify a character on the Galois group $\mathcal{G}_{\p}$ with the corresponding idele class character.
\begin{Pro}[Interpolation property]\label{interpo}
For every character $\chi\colon\mathcal{G}_{p}\to\C^{\ast}$ we have (up to a non-zero scalar) the following equality:
\begin{align*}
\int_{\mathcal{G}_{p}}{\chi(\gamma)\mu_{\Theta_{\Sigma},s}(d\gamma)}=&\prod_{\mathfrak{p}\in S_{p}}{e(\Theta_{p},\chi_{\mathfrak{p}},s+1/2)}\times L_{S_{\infty}\cup\Sigma}(\pi\otimes\chi,s+1/2)\\
&\times \prod_{v\in S_{\infty}}\zeta_{v}(\varphi_{v},\chi_{v},s+1/2).
\end{align*}
\end{Pro}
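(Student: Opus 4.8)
The plan is to unwind the definition of $\mu_{\Theta_\Sigma,s}$, to factor the resulting global zeta integral into an Euler product of local zeta integrals, and to evaluate each local factor. Fix a continuous character $\chi\colon\mathcal{G}_p\to\C^{\ast}$; since $\mathcal{G}_p$ is profinite and $\C^{\ast}$ has no small subgroups, $\chi$ has finite image and is locally constant, so the associated idele class character factors through $\I/F^{\ast}U^{(m)}_{S_p}$ for some $m\geq 1$. For such an $m$ one has, by definition, $\int_{\mathcal{G}_p}\chi(\gamma)\mu_{\Theta_\Sigma,s}(d\gamma)=\Psi(\Phi_{m,\Sigma},\chi,s+1/2)$, and by Proposition \ref{globzeta} this equals $Z(\Phi_{m,\Sigma},\chi,s+1/2)$ for $\mathfrak{Re}(s)$ large. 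Using the factorisation $\Xi_\Phi=\prod_v\xi^{\lambda_v}_{\varphi_v}$ of the global Shalika functional together with absolute convergence, $Z(\Phi_{m,\Sigma},\chi,s+1/2)$ then breaks up as $\prod_v\zeta_v(\varphi_{m,\Sigma,v},\chi_v,s+1/2)$, the product over all places of $F$; this is an Euler product, convergent for $\mathfrak{Re}(s)$ large because $\varphi_{m,\Sigma,v}$ is the normalised spherical vector for all but finitely many $v$.

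I would then evaluate the local factors according to the three cases defining $\Phi_{m,\Sigma}$. For $v\notin S_p\cup\Sigma\cup S_\infty$ the local component $\chi_v$ is unramified, since $\chi$ is unramified outside $p$ and $\infty$, so the choice of $\varphi_{m,\Sigma,v}$ made via Proposition \ref{loczeta} gives $\zeta_v(\varphi_{m,\Sigma,v},\chi_v,s+1/2)=L(V_v\otimes\chi_v,s+1/2)$; for $v\in\Sigma$ the factor is identically $1$ by the choice of $\varphi_{m,\Sigma,v}$; and the archimedean factors $\zeta_v(\varphi_v,\chi_v,s+1/2)$ are kept as they are. The heart of the computation is at the places $\p\in S_p$: here $\varphi_{m,\p}=[\GL_n(\mathcal{O}_\p)\colon K^{(m)}_{n,\p}]\cdot\delta_{\Theta_\p}(\cf_{K^{(m)}_{n,\p}})$, and since $\det(K^{(m)}_{n,\p})\subseteq U^{(m)}_\p$ lies in the kernel of $\chi_\p$, the function $g\mapsto\chi_\p(\det(g))|\det(g)|_\p^{s}$ is invariant under right translation by $K^{(m)}_{n,\p}$. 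Combining $\delta_{\Theta_\p}=\vartheta_\p\circ\delta_{\rho_\p}$ with the normalisation of $\varphi_{m,\p}$ to rewrite $\xi^{\lambda_\p}_{\varphi_{m,\p}}$ in terms of $\xi^{\lambda_\p\circ\vartheta_\p}_{K^{(m)}_{n,\p}}$, and then applying Lemma \ref{density}, I expect $\zeta_\p(\varphi_{m,\p},\chi_\p,s+1/2)$ to be identified with $\int_{\GL_n(F_\p)}\chi_\p(\det(g))|\det(g)|_\p^{s}\,d\mu_{\Theta_\p}(g)=E(\Theta_\p,\chi_\p,s+1/2)$, which the factorisation \eqref{localmod} of Lemma \ref{localdst} then turns into $e(\Theta_\p,\chi_\p,s+1/2)\cdot L(V_\p\otimes\chi_\p,s+1/2)$.

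Assembling the local contributions and writing $L_{S_\infty\cup\Sigma}(\pi\otimes\chi,\,\cdot\,)=\prod_{\p\in S_p}L(V_\p\otimes\chi_\p,\,\cdot\,)\cdot\prod_{v\notin S_p\cup\Sigma\cup S_\infty}L(V_v\otimes\chi_v,\,\cdot\,)$ yields the asserted identity for $\mathfrak{Re}(s)$ large; since the left hand side is holomorphic in $s$ by Proposition \ref{globzeta} and the right hand side is meromorphic, it then extends to all $s$ by uniqueness of meromorphic continuation. The normalisations of the local Haar measures and of the local Shalika functionals enter only through an overall non-zero constant, independent of $\chi$ and $s$, which is the source of the scalar ambiguity in the statement.

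The step I expect to be the main obstacle is the identification at the places above $p$ via Lemma \ref{density}: strictly speaking Lemma \ref{density} is phrased for compactly supported functions, whereas $g\mapsto\chi_\p(\det(g))|\det(g)|_\p^{s}$ is not, so one has to argue — exactly as in the proof of Lemma \ref{localdst}, using the absolute convergence established there for $\mathfrak{Re}(s)$ large — that the formula of Lemma \ref{density} nevertheless applies. A secondary point requiring care is the passage from the adelic zeta integral $Z$ to the Euler product of local zeta integrals, which again rests on the convergence assertions of Propositions \ref{globzeta} and \ref{loczeta}; everything else is bookkeeping.
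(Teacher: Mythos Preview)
Your proposal is correct and follows essentially the same route as the paper's proof: unwind the definition to $\Psi=Z$, factor $Z$ as an Euler product via the decomposition of the global Shalika functional, and evaluate the local factors case by case, invoking Lemma~\ref{density} at the places above $p$ to land on $E(\Theta_\p,\chi_\p,s+1/2)$. The point you flag as the main obstacle---applying Lemma~\ref{density} to a non-compactly-supported integrand---is exactly the step the paper passes over silently; your resolution via the absolute convergence in Lemma~\ref{localdst} is the right one.
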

\begin{proof}
Since both sides of the equation are holomorphic in $s$ it is enough to show that the equality holds for $\mathfrak{Re}(s)$ large.
For $m$ large enough we get
\begin{align*}
&\ \ \int_{\mathcal{G}_{p}}{\chi(\gamma)\mu_{\Theta_{\Sigma},s}(d\gamma)}\\
=&\ \ \Psi(\Phi_{m,\Sigma},\chi,s+1/2)\\
=&\ \ Z(\Phi_{m,\sigma},\chi,s+1/2)\\
=&\ \ \prod_{v}{\zeta_{v}(\varphi_{m,\Sigma,v},\chi_{v},s+1/2)}\\
=&\hspace{-0.65em}\prod_{v\notin S_{p,\infty}\cup\Sigma}\hspace{-1.4em}L(\pi_{v}\otimes\chi_{v},s+1/2)\prod_{\mathfrak{p}\in S_{p,\infty}}\hspace{-0.7em}{\zeta_{v}(\varphi_{m,\mathfrak{p}},\chi_{\mathfrak{p}},s+1/2)}
\end{align*}
by using Proposition \ref{loczeta} and by our choice of $\varphi_{m,\mathfrak{p}}$ for $\mathfrak{p}\notin S_{p,\infty}$.
Using Lemma \ref{density} we see that
\begin{align*}
\zeta_{\mathfrak{p}}(\varphi_{m,\mathfrak{p}},\chi_{\mathfrak{p}},s+1/2)=&\int_{GL_{n}(F_{\mathfrak{p}})}{\xi^{\lambda_{\mathfrak{p}}\circ\vartheta_p}_{\varphi_{m,\mathfrak{p}}}\left(\begin{pmatrix}g&0\\0&1_n\end{pmatrix}\right)}\chi_{\mathfrak{p}}(\det(g))\left|\det(g)\right|_{\mathfrak{p}}^{s}d^{\ast}g\\
&=\int_{GL_{n}(F_{\mathfrak{p}})}{\chi_{\mathfrak{p}}(\det(g))\left|\det(g)\right|_{\mathfrak{p}}^{s}\mu_{\vartheta_{\mathfrak{p}}}(dg)}\\
&=E(\Theta_{p},\chi_{\mathfrak{p}},s+1/2)\\
&=e(\Theta_{p},\chi_{\mathfrak{p}},s+1/2)\ L(V_{\p}\otimes\chi_{\p},s+1/2)
\end{align*}
holds for all $\mathfrak{p}\in S_p$.
\end{proof}
Now let us assume that for all $\p \in S_p$ the local representation $V_{\mathfrak{p}}$ is of the form
$$V_{\mathfrak{p}}=V^{\ur}_{\mathfrak{p}}\otimes\chi^{\prime}_{\mathfrak{p}},$$
where $V^{\ur}_{\mathfrak{p}}$ is a spherical representation and $\chi_{\mathfrak{p}}^{\prime}\colon F_{\p}^{\ast}\to \C^{\ast}$ is a character.
The representation $V^{\ur}_{\mathfrak{p}}$ is isomorphic to an unramified principal series representation for all $\p\in S_p$.
As in Section \ref{localshalika}, we can choose $\chi_{i,\p}\colon F_{\p}^{\ast}\to\C^{\ast}$ for all $1\leq i\leq 2n$ and all $\p\in S_p$ such that 
\begin{itemize}
	\item $V^{\ur}_\p=\Ind_{B(F_\p)}^{G(F_\p)}(\chi_1,\ldots,\chi_{2n})$ and
	\item $\chi_{i,\p}=\eta_\p\chi_{2n-i+1,\p}^{-1}$.
\end{itemize}
Let $\Theta_\p^{\ur}=(\pi^{\ur}_\p,\rho^{\ur}_\p,\vartheta^{\ur}_\p)$ the unramified stabilizations of $V^{\ur}_{\mathfrak{p}}$ associated to these data, i.e.~:
\begin{itemize}
	\item $\pi^{\ur}_\p= \Ind_{B_n(F_\p)}^{GL_n(F_\p)}(\chi_1,\ldots,\chi_n)\otimes \Ind_{B_n(F_\p)}^{GL_n(F_\p)}(\chi_{n+1},\ldots,\chi_{2n})$
	\item $\rho^{\ur}_\p$ is the unique normalized spherical vector in $\pi^{\ur}_\p$
	\item $\vartheta^{\ur}$ is the canonical isomorphism
\end{itemize}
The local Satake parameters of $V_\p$ are given by $\beta_{i,\p}=\chi_{i,\p}(\varpi)N(\p)^{n-i+1/2}$, where $\varpi$ is a local uniformizer at $\p$.
We set $\alpha_{\p}=\prod_{i=n+1}^{2n}\chi_{i,\p}(\varpi)$.
As in Section \ref{localshalika} we assume that the technical condition $\beta_{i,\p}\beta_{j,\p}\neq \eta^{\pm 1}(\varpi)$ holds for all $1\leq i<j\leq n$ and all $\p\in S_p$.

We consider the stabilization $\Theta^{\ur}\otimes\chi^{\prime}$ of $V_{p}$, whose local components are given by $\Theta_{\mathfrak{p}}^{\ur}\otimes\chi_{\mathfrak{p}}^{\prime}$.
For a character $\chi\colon\mathcal{G}_{p}\to\C^{\ast}$ we define
$$\mathfrak{f}(\chi^{\prime}\chi)=\prod_{\mathfrak{p}\in S_{p}} \mathfrak{f}(\chi^{\prime}_{\mathfrak{p}}\chi_{\mathfrak{p}})$$
and similarly
$$\tau(\chi^{\prime}\chi)=\prod_{\mathfrak{p}\in S_{p}} \tau(\chi^{\prime}_{\mathfrak{p}}\chi_{\mathfrak{p}},\psi_\mathfrak{p}^{-1}).$$
As an immediate consequence of Lemma \ref{dothetwist}, Proposition \ref{interpo} and Theorem \ref{computation} we get
\begin{Cor}[Interpolation property - the nearly spherical case]\label{interpol}
Under the assumptions above we have that for every character $\chi\colon\mathcal{G}_{p}\to\C^{\ast}$ the following equality holds (up to a non-zero scalar):
\begin{align*}
\int_{\mathcal{G}_{p}}{\chi(\gamma)\mu_{\Theta_{\Sigma}^{\ur}\otimes\chi^{\prime},s}(d\gamma)}=&N(\mathfrak{f}(\chi^{\prime}\chi))^{ns}\tau(\chi^{\prime}\chi)^{n} \prod_{\mathfrak{p}\in S_{p}}{e^{\prime}(\Theta^{\ur}_{\mathfrak{p}}\otimes\chi^{\prime}_{\mathfrak{p}},\chi_{\mathfrak{p}},s+1/2)}\\
&\times L_{S_{\infty}\cup\Sigma}(\pi\otimes\chi,s+1/2)\prod_{v\in S_{\infty}} \zeta_{v}(\varphi_{v},\chi_{v},s+1/2),
\end{align*}
where the modified Euler factor $e^{\prime}(\Theta^{\ur}_{\mathfrak{p}}\otimes\chi^{\prime}_{\mathfrak{p}},\chi_{\mathfrak{p}},s+1/2)$ is equal to
\begin{align*}
\begin{cases}
\prod_{i=1}^{n}(1-\beta_{i}\chi(\varpi)q^{-s-1/2})(1-\beta_{n+i}^{-1}\chi(\varpi)^{-1}q^{s-1/2}) &\mbox{if}\ \ord_{\mathfrak{p}}(\mathfrak{f}(\chi^{\prime}\chi))=0\\
(\mathcal{N}(\mathfrak{p})^{\frac{n^{2}-n}{2}}\alpha_{\mathfrak{p}})^{-\ord_{\mathfrak{p}}(\mathfrak{f}(\chi^{\prime}\chi))}&\mbox{if}\ \ord_{\mathfrak{p}}(\mathfrak{f}(\chi^{\prime}\chi))>0.
\end{cases}
\end{align*}
\end{Cor}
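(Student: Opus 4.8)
The plan is to simply chain together the three results that have already been established. First I would start from Proposition \ref{interpo}, which gives, up to a non-zero scalar,
\begin{align*}
\int_{\mathcal{G}_{p}}{\chi(\gamma)\mu_{\Theta_{\Sigma}^{\ur}\otimes\chi^{\prime},s}(d\gamma)}=\prod_{\mathfrak{p}\in S_{p}}{E(\Theta^{\ur}_{\mathfrak{p}}\otimes\chi^{\prime}_{\mathfrak{p}},\chi_{\mathfrak{p}},s+1/2)}\times L_{S_{\infty}\cup\Sigma}(\pi\otimes\chi,s+1/2)\prod_{v\in S_{\infty}}\zeta_{v}(\varphi_{v},\chi_{v},s+1/2),
\end{align*}
where I have written $E$ rather than $e\cdot L$ at the places above $p$, since the local $L$-factors of a twist of an unramified principal series are already accounted for in the incomplete $L$-function (one has to be a little careful about whether the $L$-factor at $\p$ is built into $L_{S_\infty\cup\Sigma}$ or not; the cleaner bookkeeping is to keep the full modified Euler factor $E$ at $\p$ and then define $e^\prime$ as the ratio $E/L$, which is exactly the quantity computed in Theorem \ref{computation}).

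Next I would apply Lemma \ref{dothetwist}, which says $E(\Theta^{\ur}_{\mathfrak{p}}\otimes\chi^{\prime}_{\mathfrak{p}},\chi_{\mathfrak{p}},s+1/2)=E(\Theta^{\ur}_{\mathfrak{p}},\chi^{\prime}_{\mathfrak{p}}\chi_{\mathfrak{p}},s+1/2)$, reducing everything to modified Euler factors of the untwisted unramified stabilization but evaluated at the character $\chi^{\prime}_{\mathfrak{p}}\chi_{\mathfrak{p}}$. Then Theorem \ref{computation}, applied with the character $\chi^{\prime}_{\mathfrak{p}}\chi_{\mathfrak{p}}$ (of conductor $\mathfrak{f}(\chi^{\prime}_{\mathfrak{p}}\chi_{\mathfrak{p}})=\mathfrak{p}^{m_\p}$ with $m_\p=\ord_\p(\mathfrak{f}(\chi^\prime\chi))$), gives the value of $E(\Theta^{\ur}_{\mathfrak{p}},\chi^{\prime}_{\mathfrak{p}}\chi_{\mathfrak{p}},1/2)$ as a non-zero rational constant times $\tau(\chi^\prime_\p\chi_\p)^n$ times either the ramified factor $(\mathcal N(\p)^{(n^2-n)/2}\alpha_\p)^{-m_\p}$ or, in the unramified case, the rational function $\prod_{i=1}^n\frac{1-\beta_{n+i}^{-1}\chi(\varpi)^{-1}q^{-1/2}}{1-\beta_{n+i}\chi(\varpi)q^{-1/2}}$. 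To pass from $s=1/2$ to general $s$ one observes that replacing $\chi$ by $\chi\lvert\cdot\rvert^{s}$ shifts things in the expected way: the Gauss sum picks up a factor of $\mathcal N(\p)^{n m_\p s}$ (whence the global factor $N(\mathfrak f(\chi^\prime\chi))^{ns}$ after taking the product over $\p\in S_p$), the ramified Euler factor becomes $(\mathcal N(\p)^{(n^2-n)/2}\alpha_\p)^{-m_\p}$ still (the $s$-dependence is absorbed into the Gauss-sum normalization), and in the unramified case $\chi(\varpi)$ gets replaced by $\chi(\varpi)q^{-s}$, which after clearing the common factor $\beta_{n+i}=\alpha_{n+i}q^{n-i+1/2}$ and rewriting in terms of $\mathcal N(\p)=q$ yields precisely $\prod_{i=1}^n(1-\beta_i\chi(\varpi)q^{-s-1/2})(1-\beta_{n+i}^{-1}\chi(\varpi)^{-1}q^{s-1/2})$, using the relation $\chi_{i}=\eta\chi_{2n-i+1}^{-1}$ (equivalently $\beta_i\beta_{2n-i+1}=\eta(\varpi)\,q$ up to normalization) to trade the factor indexed by $n+i$ for one indexed by $i$. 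Collecting the product over $\p\in S_p$ and folding the various non-zero rational constants (from Theorem \ref{computation} and from Proposition \ref{interpo}) into the overall non-zero scalar gives the stated formula.

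The only genuinely delicate point — everything else being formal substitution — is the normalization bookkeeping: making sure the $s$-shift is distributed consistently between the Gauss sum $\tau(\chi^\prime\chi)$, the factor $N(\mathfrak f(\chi^\prime\chi))^{ns}$, and the Euler factors $e^\prime$, and that the rewriting of the unramified factor from the shape in Theorem \ref{computation} (which is stated at $s=1/2$ in terms of $\beta_{n+i}$ only) into the symmetric shape in the statement (involving both $\beta_i$ and $\beta_{n+i}$) really does use only the functional-equation relation $\chi_{i}=\eta\chi_{2n-i+1}^{-1}$ and no further input. This is routine but is where sign/exponent errors would creep in, so it is the step I would write out most carefully; the rest is immediate from Lemma \ref{dothetwist}, Proposition \ref{interpo} and Theorem \ref{computation}.
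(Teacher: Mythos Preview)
Your approach is exactly the one the paper takes: the corollary is stated there as an immediate consequence of Lemma \ref{dothetwist}, Proposition \ref{interpo} and Theorem \ref{computation}, and your proposal simply spells out this chain (together with the $s$-shift and the rewriting of the unramified factor via $\chi_i=\eta\chi_{2n-i+1}^{-1}$) in more detail than the paper does. Nothing is missing.
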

\end{subsection}
\end{section}

\begin{section}{Boundedness of the distribution}\label{last}
For a cohomological cuspidal representation and a critical half-integer $s+1/2$ we are going to recast the definition of the distribution in terms of group cohomology.
As an immediate consequence the rationality of the distribution follows.
Further, we show that the weak ordinarity condition combined with the existence of lattices, which are homologically of finite type, implies the boundedness of the distribution.
Let us fix a cuspidal automorphic representation $V$ of $G(\A)$ with central character $\omega\colon \I/F^{\ast}\to \C^{\ast}$. We put $V_p=\otimes_{\mathfrak{p}\in S_p}V_\mathfrak{p}$ and $V_\infty=\otimes_{v\in S_{\infty}}V_v$.
\begin{subsection}{Cohomology classes attached to characters}\label{characters}
Before attaching cohomology classes to automorphic forms we attend to the simpler question of how to give a cohomological description of distributions and characters.

Let $R$ be a ring. The Artin reciprocity map induces a surjective map $\I^{\infty}/U^{p,\infty}\to\mathcal{G}_{p}$, which yields an isomorphism $H^{0}(F^{\ast}_{+},\Ind_{U^{\infty}}^{\I^{\infty}}(C^{0}(U_{p},R)))\to C^{0}(\mathcal{G}_{p},R)$. 
The pairing
\begin{align*}
C^{0}(U_{p},R)\times\Dist(U_{p},N)\too N
\end{align*}
induces a cap product
\begin{align*}
&H^{0}(F^{\ast}_{+},\Ind_{U^{\infty}}^{\I^{\infty}}(C^{0}(U_{p},R))) \times H_{0}(F^{\ast}_{+},\cind_{U^{\prime}}^{\I^{\infty}}(\Dist(U_{p},N)))\\
\xrightarrow{\cap} &H_{0}(F^{\ast}_{+},\cind_{U^{\prime}}^{\I^{\infty}}N)\cong \bigoplus N \xrightarrow{\sum} N
\end{align*}
for every subgroup $U^{\prime}\subset U^{\infty}$ of finite index and every $R$-module N.
The direct sum decomposition  $H_{0}(F^{\ast}_{+},\cind_{U}^{\I^{\infty}}N)\cong \oplus N$ follows from Shapiro's Lemma and a strong approximation type argument.
This in turn yields a map
\begin{align}\label{partial}
\partial\colon H_{0}(F^{\ast}_{+},\cind_{U^{\prime}}^{\I^{\infty}}(\Dist(U_{p},N)))\too \Dist(\mathcal{G}_{p},N).
\end{align}

Let $\chi\colon I/F^{\ast}\to \C^{\ast}$ be an algebraic Hecke character.
It is of the form $\chi^{\prime}\left|\cdot\right|^{s}$, where $\chi^{\prime}$ is a finite order character and $s\in\Z$ and thus, its finite part takes values in a finite extension $\mathcal{E}$ of $\Q$. Let $\mathcal{R}$ be the valuation ring of $\mathcal{E}$ with respect to $\ord_{p}$.
Then the finite part of $\omega$ away from $p$ takes values in $\mathcal{R}^{\ast}$.

Let us fix an $F$-rational algebraic group $\mathbb{G}$ and an algebraic character $\dc\colon \mathbb{G}\to \mathbb{G}_{m,F}$. For every $s\in\Z$ we write $V_{s}[\dc]$ for the $\Q$-rational representation (resp.~its base change to $\mathcal{E}$) given by
$$\Res_{\F/\Q}\mathbb{G}\stackrel{\dc}{\longrightarrow}\Res_{\F/\Q}\mathbb{G}_{m,F}\xrightarrow{\mathcal{N}^{-s}} \mathbb{G}_{m,\Q},$$
where $\mathcal{N}$ is the norm character.
Let $K\subset \mathbb{G}(\A^{\infty})$ be an compact, open subgroup, which lies in the kernel of the $\mathbb{G}(F)$-invariant character
\begin{align*}
\chi\circ\dc\colon \mathbb{G}(\A)\to \C^{\ast}.
\end{align*}
We define the cohomology class
\begin{align}\label{charclass}
\begin{split}
&[\dc_{\chi}]\in \HH^{0}(\mathbb{G}(F)^{+},C(\mathbb{G}(\A^{\infty})/K,V_{s}(\dc)))\quad\mbox{via}\\
&[\dc_{\chi}](g^{\infty})=\chi(\dc(g^{\infty}))\quad \forall g^{\infty}\in \mathbb{G}(\A^{\infty}).
\end{split} 
\end{align}
Let $R$ be the completion of $\mathcal{R}$ and $E$ its field of fractions.
Assume that $K$ can be written as $K^{p}K_{p}$ with $K^{p}\subset \mathbb{G}(\A^{p,\infty})$ and $K_{p}\subset \mathbb{G}(F_{p})$.
We view $V_{s}(\dc)\otimes_{\mathcal{E}}E$ as a $\mathbb{G}(F_p)$-representation, whose underlying vector space is $E$.
We write $L_{s}(\dc)$ for the $K_{p}$-stable lattice $R\subset E=V_{s}(d)\otimes_{\mathcal{E}}E$.
It is easy to see that the image of $[\dc_{\chi}]$ under the standard isomorphism
\begin{align*}
C(\mathbb{G}(\A^{\infty})/K,V_{s}(\dc)\otimes_{\mathcal{E}}E)\xrightarrow{\cong}
C(\mathbb{G}(\A^{p,\infty})/K^{p},\Ind_{K_{p}}^{\mathbb{G}(F_p)}V_{s}(\dc)\otimes_{\mathcal{E}}E)
\end{align*}
is contained in $\HH^{0}(\mathbb{G}(F)^{+},C(\mathbb{G}(\A^{p,\infty})/K^{p},\Ind_{K_{p}}^{\mathbb{G}(F_p)}L_{s}(\dc)))$.
\end{subsection}
\begin{subsection}{Cohomological cuspidal representations}\label{cohom}
For every Archimedean place $v$ we define $K_{v}\subset G(F_{v})\cong G(\R)$ as the product of the maximal compact subgroup $\O(2n)$ and the center $Z(F_{v})=Z(\R)$ of $G(F_{v})$.
We denote by $\mathfrak{g}_{v}$ the complexification of the Lie algebra of $G(F_{v})$ and similarly we write $\mathfrak{k}_{v}$ for the complexification of the Lie algebra of $K_{v}$.
We put $K_{\infty}=\prod_{v|\infty}{K_{\infty}}$, $\mathfrak{k}_{\infty}=\oplus_{v|\infty}\mathfrak{k}_{v}$ and $\mathfrak{g}_{\infty}=\oplus_{v|\infty}\mathfrak{g}_{v}$.

Let us recall that the $(\mathfrak{g}_{\infty},K_{\infty}^{\circ})$-cohomology of a $(\mathfrak{g}_{\infty},K_{\infty}^{\circ})$-module $W$ can be computed by the Chevalley-Eilenberg complex:
\begin{align*}
\HH^{j}((\mathfrak{g}_{\infty},K_{\infty}^{\circ}),W)=\HH^{j}(\Hom_{K_{\infty}^{\circ}}(\Lambda^{\bullet}(\mathfrak{g}_{\infty}/\mathfrak{k}_{\infty}),W)).
\end{align*}
(See the book \cite{BW} of Borel and Wallach for the basics on $(\mathfrak{g}_{v},K_{v}^{\circ})$-modules and their cohomology.)
Note that there is a Künneth rule for $(\mathfrak{g}_{\infty},K_{\infty}^{\circ})$-cohomology, i.e.~ if $W=\otimes_{v} W_{v}$, where each $W_{v}$ is a $(\mathfrak{g}_{v},K_{v}^{\circ})$-module, we have
\begin{align}\label{Knth}
\HH^{j}((\mathfrak{g}_{\infty},K_{\infty}^{\circ}),W)
= \bigoplus_{\sum{j_{v}}=j}\ \ \bigotimes_{v|\infty} \HH^{j_{v}}((\mathfrak{g}_{v},K_{v}^{\circ}),W_{v}).
\end{align}
The representation $V_{v}$ is a $(\mathfrak{g}_{v},K_{v}^{\circ})$-module for all $v\in S_{\infty}$.
Given dominant weights $\mu_{v}=(\mu_{1,v},\ldots,\mu_{2n,v})\in \Z^{2n}$ for all $v \in S_{\infty}$ we let $V_{\mu_{v}}$ be the complexification of the irreducible $F_{v}$-rational representation of $G(F_{v})$ of highest weight $\mu_{v}$.
(As always, highest weight is meant with respect to the Borel group of upper triangular matrices.) 
We put $V_{\mu}=\otimes_{v\in S_{\infty}}V_{\mu_{v}}$.
This is a $\C$-rational representation of the algebraic group $\Res_{F/\Q}\mathbb{GL}_{2n}$.
\begin{Def}
The representation $V$ is cohomological of weight $\mu$ if there exists an integer $j\in \N$ such that the $(\mathfrak{g}_{\infty},K_{\infty}^{o})$-cohomology group $\HH^{j}((\mathfrak{g}_{\infty},K_{\infty}^{\circ}),V_{\infty}\otimes V_{\mu}^{\vee})$ does not vanish.
\end{Def}
\noindent From now one we assume that $V$ is cohomological of weight $\mu$ and put $V_{\al}=V_{\mu}$.
By the work of Clozel (cf.~ \cite{C} Lemma 4.9) it is known that there exists an integer $w$ - the \textit{purity weight} of $V$ - such that
\begin{align*}
\mu_{i,v}+\mu_{2n-i+1,v}=w
\end{align*}
holds for all $1\leq i\leq n$ and $v\in S_{\infty}$.
\begin{Rem} \label{cohchar}
\begin{enumerate}[(i)]\thmenumhspace
  \item If $V$ has a $(\eta,\psi)$-Shalika model, then it follows from the proof of \cite{RG} Theorem 5.3 that $\eta_{v}=\sgn^{w}|\cdot|^{w}$ for all $v\in S_{\infty}$.
	\item\label{cohheck} The central character of a cohomological representations is always an algebraic Hecke character.
\end{enumerate}
\end{Rem}
We define $q_{0}=n^2+n-1$ and set $q=dq_{0}$.
By the discussion in \cite{GRG}, Section 3.4, the cohomology group $\HH^{q_{v}}((\mathfrak{g}_{v},K_{v}^{\circ}),V_{v}\otimes V_{\mu_{v}}^{\vee})$ is $2$-dimensional if $q_{v}=q_{0}$ and vanishes if $q_{v}>q_{0}$ for every $v\in S_{\infty}$.
Hence, by the Künneth formula \eqref{Knth} we see that
\begin{align*}
\HH^{q}((\mathfrak{g}_{\infty},K_{\infty}^{\circ}),V_{\infty}\otimes V_{\al}^{\vee})
&= \bigoplus_{q_{v}=q_{0}} \bigotimes_{v|\infty} \HH^{q_{v}}((\mathfrak{g}_{v},K_{v}^{\circ}),V_{v}\otimes V_{\mu_{v}}^{\vee})\\
&\cong \bigoplus_{q_{v}=q_{0}} \bigotimes_{v|\infty} \C^{2}.
\end{align*}
Moreover, there is a natural $K_{\infty}/K_{\infty}^{\circ}$-action on $\HH^{q}((\mathfrak{g}_{\infty},K_{\infty}^{\circ}),V_{\infty}\otimes V_{\al}^{\vee})$.
The $\varepsilon$-eigenspace of this action is one-dimensional for every character $\varepsilon$ of $K_{\infty}/K_{\infty}^{\circ}$.
We fix generators $\left[V_{\infty}\right]^{\varepsilon}$ of these eigenspaces.
By Section II.3.4 of \cite{BW} we have a canonical inclusion
$$\HH^{j}(\mathfrak{g}_{\infty},K_{\infty}^{\circ},V_{\infty}\otimes V_{\al}^{\vee})\subset
\Hom_{K^{\circ}_{\infty}}(\Lambda^{j}(\mathfrak{g}_{\infty}/\mathfrak{k}_{\infty}),V \otimes V_{\al}^{\vee}).$$
Thus, after choosing a basis $(X_{i}^{\ast})$ of $(\mathfrak{g_{v}}/\mathfrak{k}_{v})^{\vee}$ and a basis $b_{v,d}^{\vee}$ of $V_{\mu_{v}}^{\vee}$ we can write $\left[V_{\infty}\right]^{\varepsilon}=\otimes_{v\in S_{\infty}}\left[V_{v}\right]^{\varepsilon_v}$ as
\begin{align*}
\left[V_{v}\right]^{\varepsilon_v}=\sum_{\underline{i}=(i_{1},\ldots, i_{q_{0}})}\sum_{l=1}^{\dim V_{\mu_{v}}^{\vee}} X_{\underline{i}}^{\ast}\otimes \varphi^{\varepsilon_v}_{v,\underline{i},l}\otimes b_{v,l}^{\vee}.
\end{align*}
Here $\varphi^{\varepsilon_v}_{v,\underline{i},l}$ are elements of $V_{v}$ and $(X_{\underline{i}}^{\ast})= X^{\ast}_{i_1}\wedge\cdots\wedge X^{\ast}_{i_{q_{0}}}$
for $\underline{i}=(i_1,\ldots,i_{q_{0}})$.
Finally, we set
\begin{align*}
\left[V_{v}\right]:=\bigoplus_{\varepsilon_v}\left[V_{v}\right]^{\varepsilon_v}
\end{align*}
and
\begin{align*}
\left[V_{\infty}\right]:=\bigoplus_{\varepsilon}\left[V_{\infty}\right]^{\varepsilon}.
\end{align*}

Our aim is to show that the distribution $\mu_{\Theta,s}$ defined in the previous chapter is a $p$-adic measure provided that $s+1/2\in \C$ is a critical point of $\pi$.
We do not want to recall the definition of criticality of a point here.
It is enough to know the following two facts: Firstly, by \cite{GRG} Proposition 6.1.1 the set of critical points of $V$ is given by
\begin{align*}
\text{Crit}(\pi)=\left\{s+1/2\in \Z+1/2 \mid -\mu_{n,v}\leq s \leq -\mu_{n+1,v}\ \forall v\in S_{\infty}\right\}.
\end{align*}
Secondly, if $s+1/2$ is critical, then for all $v\in S_{\infty}$ there is a unique $1$-dimensional $H(\C)$-stable subrepresentation $V_{s,v}$ of $V_{\mu_{v}}$ which is isomorphic to the representation given by the character $\det^{-s,s+w}$.
This is proven in \cite{GRG} Proposition 6.3.1. for the case $s=0$.
The other cases follow by twisting the representation with an integral power of the determinant.

When we defined the distribution in Section \ref{globaldist}, we had not specified the vector at infinity.
We will catch up with this now.
Let $\mathfrak{h}_{\Q}$ be the Lie-Algebra of the algebraic group $H$ over $\Q$ and $\mathfrak{k^{\prime}}_{\Q}$ the Lie subalgebra of the $\Q$-rational algebraic subgroup $H\cap Z\SO_{2n}$.
The dimension of $\mathfrak{h}_{Q}/\mathfrak{k^{\prime}}_{\Q}$ is exactly $q_{0}$.
We fix a $\Q$-basis $T_{1},\ldots,T_{q_{0}}$ of $\mathfrak{h}_{Q}/\mathfrak{k^{\prime}}_{\Q}$ and denote by $T_{i,v}$ the image of $T_{i}$ in $\mathfrak{g}_{v}/\mathfrak{k}_{v}$ for every Archimedean place $v$.
Additionally, we fix a generator $x_{s,v}$ of the subspace $V_{s,v}\subset V_{\mu_{v}}$ for every critical point $s+1/2$ of $V$ and every place $v\in S_\infty$.
Evaluation of
\begin{align*}
\left[V_{v}\right]=\sum_{\underline{i}=(i_{1},\ldots i_{q_{0}})}\sum_{d=1}^{\dim V_{\mu_{v}}^{\vee}} X_{\underline{i}}^{\ast}\otimes \varphi_{v,\underline{i},d}\otimes b_{v,d}^{\vee}
\end{align*}
at $(T_{1,v},\ldots,T_{q_{0},v},x_{s,v})$ yields an element $\varphi_{s,v}\in \pi_{v}$.
We will choose $\Phi_{s,\infty}=\otimes_{v|\infty}\varphi_{s,v}$ as the vector at infinity in the definition of the distribution $\mu_{\Theta,s}$.
It is important to know that the complex numbers
\begin{align*}
c(V_\infty,\chi_{\infty},s+1/2)=\prod_{v|\infty}\zeta(\varphi_{s,v},\chi_{v},s+1/2)
\end{align*}
do not vanish for any character $\chi_{\infty}\colon F_{\infty}^{\ast}\to \left\{\pm 1\right\}$.
Otherwise, the distribution $\mu_{\Theta,s}$ would trivially be zero.
Luckily, the non-vanishing is known by the work of Sun (cf. \cite{Sun}).
In particular, we have that for every critical point $s+1/2$ and every locally constant character $\chi_\infty$ the equality
\begin{align*}
c(V_\infty,\chi_{\infty},s+1/2)=L(\pi_{\infty}\otimes\chi_{\infty},s+1/2).
\end{align*}
holds up to a non-zero constant.
\end{subsection}
\begin{subsection}{The Eichler Shimura homomorphism}\label{ES}
In the following we are going to explain the (adelic) Eichler-Shimura map:
Let $X_v=G(F_{v})^{+}/K_{v}^{\circ}$ be the symmetric space associated to $G(F_{v})$ for $v\in S_{\infty}$.
We put $X=\prod_{v\in S_{\infty}} X_{v}$ and denote by $e$ the image of the unit element under the canonical projection $\prod_{v \in S_{\infty}}G(F_{v})^{+}\to X$.
We can naturally identify the tangent space $T_{X,e}$ of $X$ at $e$ with $\mathfrak{g}_{\infty}/\mathfrak{k}_{\infty}$.
The Eichler-Shimura map for an integer $j\geq 0$ is a $G(\A^{\infty})$-equivariant homomorphism
\begin{align*}
\HH^{q}((\mathfrak{g}_{\infty},K_{\infty}^{\circ}),V \otimes V_{\al}^{\vee})\to \HH^{0}(G(F)^{+},C^{0}(G(\A^{\infty}),\Omega^{q}_{\fdhar}(V_{\al}^{v}))),
\end{align*}
where $\Omega^{q}_{\fdhar}(V_{\al}^{\vee})$ is the space of fast decreasing harmonic $q$-differential forms on $X$ with values in $V_{\al}^{\vee}$ as defined by Borel (cf.~\cite{Bo}).
It is given as follows:
By definition $V$ is a subrepresentation of the right regular representation on $C^{\infty}(G(F)\backslash G(\A))$.
Given $\eta \in \Hom_{K_{\infty}}(\Lambda^{j}(\mathfrak{g}_{\infty}/\mathfrak{h}_{\infty}),V \otimes V_{\al}^{\vee})$ we can evaluate it on a $j$-tuple $(Y_{1},\ldots, Y_{j})$ of tangent vectors at $e$ and get 
\begin{align*}\eta(Y_{1},\ldots,Y_{j})\in C^{\infty}(G(F)\backslash G(\A), V_{\al}^{\vee}).
\end{align*}
For an element $(x,g^{\infty})\in X\times G(\A^{\infty})$ choose $g_{\infty}\in \prod_{v \in S_{\infty}}G(F_{v})$ such that $g_{\infty}e=x$. Let $Dg_{\infty}$ the differential of the action of $g_{\infty}$ on $X$.
Sending tangent vectors $Y_{1},\ldots, Y_{j}$ at a point $x\in X$ to
\begin{align*}\tilde{\eta}(g^{\infty})_{x}(Y_{1},\ldots,Y_{j})=g_{\infty}^{-1}(\eta ((Dg_{\infty})^{-1}Y_{1},\ldots,(Dg_{\infty})^{-1}Y_{j})(g_{\infty},g^{\infty}))\end{align*}
defines a differential form $\tilde{\eta}(g^{\infty})$ on $X$ with values in $V_{\al}^{\vee}$.
Since cusp forms are fast decreasing we see that we get in fact a fast decreasing differential form.
It follows from Section II.3 of \cite{BW} that every differential form in the image of the Eichler-Shimura map is closed and harmonic.

The choice of an element $$\left[V_{\infty}\right]\in \HH^{j}((\mathfrak{g}_{\infty},K_{\infty}^{\circ}),V_{\infty}\otimes V_{\al}^{\vee})$$ made at the end of Section \ref{cohom} yields a map
\begin{align*}
\ES\colon \bigotimes_{v\nmid\infty} V_{v} \longrightarrow \HH^{0}(G(F)^{+},C^{0}(G(\A^{\infty}),\Omega^{q}_{\fdhar}(V_{\al}^{v}))).
\end{align*}
\begin{Def}
\noindent Let $S$ be a finite set of finite places and $R$ a ring which contains the image of $\I^{S\cup S_{\infty}}$ under $\omega$.
We fix an algebraic subgroup $A\subset G$, which contains the center of $G$.
For every $R[A(F)^{+}]$-module $M$ and every compact, open subgroup $K\subset G(\A^{S\cup S_{\infty}})$ we define $\mathcal{C}^{S}_{\omega}(A,K,M)$ to be the $R$-module of functions $f\colon A(\A^{S\cup S_{\infty}})\to M$ such that $f(gkz)=w(z)f(g)$ for all $g\in A(\A^{S\cup S_{\infty}})$, $k\in K\cap A(\A^{S\cup S_{\infty}})$ and $z\in Z(\A^{S\cup S_{\infty}})$.
If $S$ is the empty set, we omit it from the notation.
\end{Def}
If $\Phi \in  \otimes_{v\nmid\infty} V_{v}$ is invariant under some compact, open subgroup $K\subset G(\A^{\infty})$, we see that
\begin{align*}
\ES(\Phi)\in \HH^{0}(PG(F)^{+},\mathcal{C}_{\omega}(G,K,\Omega^{q}_{\fdhar}(V_{\al}^{v})))
\end{align*}

In fact, we need a slight variant of the above construction.
Given $\Phi^{p}\in \otimes_{v\nmid p,\infty} V_{v}$ invariant under some compact, open subgroup $K^{p}\subset G(\A^{p,\infty})$ we define
\begin{align*}
\ES^{p}(\Phi^{p})\in \HH^{0}(PG(F)^{+},\mathcal{C}^{S_p}_{\omega}(G,K^{p},\Hom(V_{p},\Omega^{q}_{\fdhar}(V_{\al}^{v}))))
\end{align*}
by
\begin{align*}
\ES^{p}(\Phi^{p})(g^{p},\varphi_{p})= \ES(\Phi^{p}\otimes\varphi_{p})(g^{p},1)
\end{align*}
for $\varphi_{p}$ in $V_{p}$.
Evaluation at an element $\varphi_{p}$ of $V_{p}$, which is invariant under some compact, open subgroup $K_{p}\subset \prod_{\mathfrak{p}\in S_{p}}G(F_{\mathfrak{p}})$, induces a $PG(F)^{+}$-equivariant map
\begin{align*}
\mathcal{C}^{S_p}_{\omega}(G,K^{p},\Hom(V_{p},\Omega^{q}_{\fdhar}(V_{\al}^{v})))
\xrightarrow{\ev(\varphi_{p})}\mathcal{C}_{\omega}(G,K^{p}K_{p},\Omega^{q}_{\fdhar}(V_{\al}^{v}))
\end{align*}
such that $\ev(\varphi_{p})(\ES^{p}(\Phi^{p}))=\ES(\Phi^{p}\otimes\varphi_{p})$.

Let $\bar{X}$ the Borel-Serre bordification of $X$ with boundary $\partial X$ as constructed in \cite{BS}.
It is a smooth manifold with corners, which contains $X$ as an open submanifold.
The embedding $X\subset \bar{X}$ is a homotopy equivalence. The operation of $G(F)^{+}$ can be extended naturally to $\bar{X}$.
If $M$ is a smooth manifold with corners, we let $C^{\sing}_{\bullet}(M)$ be the complex of singular chains in $M$ and $C^{\sm}_{\bullet}(M)$ the subcomplex of smooth chains
 By Lemma 5 of \cite{Wh} continuous chains can be approximated by smooth chains.
Hence by a standard argument the inclusion $C^{\sm}_{\bullet}(M)\subset C^{\sing}_{\bullet}(M)$ is a quasi-isomorphism (see chapter 16 of \cite{Lee} for a detailed proof in the case of smooth manifolds without corners).
Using this fact for both $\bar{X}$ and its boundary $\partial X$, we see that the complex $C^{\sm}_{\bullet}(\bar{X},\partial X):=C^{\sm}_{\bullet}(\bar{X})/C^{\sm}_{\bullet}(\partial X)$ is quasi-isomorphic to the complex of relative singular chains $C^{\sing}_{\bullet}(\bar{X},\partial X)$. 
Note that these are in fact quasi-isomorphisms of complexes of $G(F)^{+}$-modules.

For every integer $j\geq 0$ there is a $PG(F)^{+}$-equivariant pairing
\begin{align*}
\Omega^{j}_{fg}(V_{\al}^{\vee})\times C^{\sm}_{j}(\bar{X},\partial X)\to V_{\al}^{\vee}
\end{align*}
given as follows: We denote by $\Delta_{j}$ the standard simplex of dimension $j$.
If $f\colon\Delta_{j}\to \bar{X}$ is a smooth chain and $\eta$ a fast decreasing differential form, we take the integral of the pullback $f^{\ast}\eta$ over the pre-image of $X$ under $f$.
If the differential form is closed, it vanishes on the image of the boundary map $C^{\sm}_{j+1}(\bar{X},\partial X)\to C^{\sm}_{j}(\bar{X},\partial X)$ by Stokes' Theorem.

Therefore we get a $G(F)^{+}$-equivariant morphism of co-complexes 
\begin{align*}
\Omega^{j}_{\fdhar}(V_{\al}^{\vee})[-j]\to \Hom(C^{\sm}_{\bullet}(\bar{X},\partial X),V_{\al}^{\vee}),
\end{align*}
which induces the following maps in (hyper-)group cohomology: 
\begin{align*}
&\HH^{0}(PG(F)^{+},\mathcal{C}_{\omega}(G,K,\Omega^{j}_{\fdhar}(V_{\al}^{v})))\\
\longrightarrow &\mathbb{H}^{j} (PG(F)^{+},\mathcal{C}_{\omega}(G,K,\Hom(C^{\sm}_{\bullet}(\bar{X},\partial X),V_{\al}^{\vee})))
\end{align*} 
and 
\begin{align*}
&\HH^{0}(PG(F)^{+},\mathcal{C}^{S_p}_{\omega}(G,K^{p},\Hom(V_{p},\Omega^{q}_{\fdhar}(V_{\al}^{v}))))\\
\longrightarrow &\mathbb{H}^{j} (PG(F)^{+},\mathcal{C}^{S_p}_{\omega}(G,K^{p},\Hom(V_{p},\Hom(C^{\sm}_{\bullet}(\bar{X},\partial X),V_{\al}^{\vee})))).
\end{align*}
We will denote the image of $\ES(\Phi)$ (resp.~$\ES^{p}(\Phi^{p})$) under the above map for $j=q$ by $\ES_{\coh}(\Phi)$ (resp.~$\ES^{p}_{\coh}(\Phi^{p})$).

\end{subsection}
\begin{subsection}{The Steinberg module}\label{steinberg}
In this chapter we recall some standard facts about the Steinberg module of an algebraic group and about Borel-Serre duality.
Let $\mathbb{G}$ be a connected split reductive group over $F$ of semi-simple $F$-rank $l\geq 1$ and $I$ the set of proper maximal $F$-rational parabolic subgroups of $\mathbb{G}(F)$.
For $\tau\in I$ let $\mathbb{P}_{\tau}$ the corresponding parabolic group.
A subset $S=\left\{\tau_{0},\ldots,\tau_{k}\right\}\subset I$ of cardinality $k+1$ is called $k$-simplex if $\mathbb{P}_{\tau_{1}}\cap\ldots\cap\mathbb{P}_{\tau_{k}}$ is a parabolic subgroup.
Let $\St_{k}$ be the free abelian group generated by the $k$-simplices on $I$.
Taking the associated simplicial complex we get a sequence of $\mathbb{G}(F)$-modules
\begin{align}\label{Steinberg}
\St_{l-1} \to  \St_{l-2}\to\cdots\to \St_{0}\to\Z\to 0.
\end{align}
\begin{Def}
The \textit{Steinberg module} $\St_{\mathbb{G}}$ of $\mathbb{G}(F)$ is the kernel of the map $St_{l-1} \to  St_{l-2}$ (where we set $\St_{-1}=\Z$ if $l=1$).
\end{Def} 
Let $\mathcal{P}_{k}$ be the set of proper $F$-rational parabolic subgroups of semi-simple $F$-rank $l-1-k$ containing a fixed Borel subgroup $\mathbb{B}(F)$ of $\mathbb{G}(F)$.
Then for $0\leq k\leq l-1$ there is a natural isomorphism of $\mathbb{G}(F)$-modules
\begin{align*}
\bigoplus_{\mathbb{P}\in\mathcal{P}_{k}}\cind_{\mathbb{P}(F)}^{\mathbb{G}(F)}\Z\stackrel{\cong}{\longrightarrow} \St_{k}.
\end{align*}
The homology of the complex (\ref{Steinberg}) can be identified with the reduced homology of the spherical building associated to $\mathbb{G}(F)$.
Since the reduced homology of the building vanishes outside the top degree (see for example \cite{BS2}) the following complex of $\mathbb{G}(F)$-modules is exact:
\begin{align*}
0\to \St_{\mathbb{G}} \to \St_{l-1} \to\cdots\to \St_{0}\to\Z\to 0
\end{align*}
\begin{Rem}
Because every parabolic subgroup of $\mathbb{G}$ contains the center $\mathbb{Z}$ of $\mathbb{G}$, we see that $\St_{\mathbb{G}}$ and $\St_{\mathbb{G}/\mathbb{Z}}$ are canonically isomorphic.
\end{Rem}
The choice of a Borel subgroup $\mathbb{B}$ with maximal torus $\mathbb{T}$ gives us the element
\begin{align*}
\tau_{\mathbb{G}}=\sum_{w\in W_{\mathbb{G}}}{w\otimes \epsilon(w)\in \St_{\mathbb{G}} \subset \Z[\mathbb{G}(F)]\otimes_{\Z[\mathbb{B}(F)]}\Z},
\end{align*}
where $W_{\mathbb{G}}$ denotes the Weyl group of $\mathbb{G}$ with respect to $\mathbb{T}$ and $\epsilon\colon W_{\mathbb{G}}\to \Z^{\ast}$ is the sign character corresponding to $\mathbb{B}$.
Now let $\mathbb{P}$ be a parabolic subgroup containing $\mathbb{B}$ and let $\mathbb{L}$ be the Levi-factor containing the torus $\mathbb{T}$.
There exists an $\mathbb{L}(F)$ equivariant map
\begin{align}\label{steinhom}
\St_{\mathbb{L}}\to \St_{\mathbb{G}}
\end{align}
which maps $\tau_{\mathbb{L}}$ to $\tau_{\mathbb{G}}$ (see Proposition 1.1 of \cite{R}).

By \cite{BS} every arithmetic subgroup $\Gamma\subset\mathbb{G}(F)$ is a virtual duality group with duality module $\St_{\mathbb{G}}$.
Let $\nu=\nu(\Gamma)$ be the virtual cohomological dimension of $\Gamma$.
It is independent of the choice of the arithmetic subgroup.
Since $\St_{\mathbb{G}}$ is $\Z$-free it follows from \cite{KB}, chapter VIII.10, that the map
\begin{align*}
\BS_{\Gamma}\colon \HH^{\bullet}(\Gamma, \Hom(\St_{\mathbb{G}},M))\stackrel{\cap e}{\longrightarrow} H_{\nu-\bullet}(\Gamma, \St_{\mathbb{G}}\otimes\Hom(\St_{\mathbb{G}},M))\stackrel{\ev}{\longrightarrow}
H_{\nu-\bullet}(\Gamma, M)
\end{align*}
is an isomorphism for every $\Gamma$-module $M$ as long as $\Gamma$ is torsion-free.
Here $e\in H_{\nu}(\Gamma, \St_{\mathbb{G}})\cong\Z$ is a fundamental class (see \cite{KB} VIII.6) and $\ev$ is the map induced by the evaluation map $\St_{\mathbb{G}}\otimes\Hom(St_{\mathbb{G}},M)\to M$.
Now let $K\subset\mathbb{G}(\A^{\infty})$ be a compact, open subgroup.
After passing to a subgroup of finite index we may assume that $\mathbb{G}(F)\cap gKg^{-1}$ is torsion-free for all $g\in\mathbb{G}(\A^{\infty})$.
For every $K$-module $M$ and every subgroup $\mathbb{G}(F)^{\prime}\subset\mathbb{G}(F)$ of finite index we get an isomorphism
\begin{align}\label{BSdual}
\BS_{\mathbb{G}}\colon \HH^{\bullet}(\mathbb{G}(F)^{\prime}, \Ind_{K}^{\mathbb{G}(\A^{\infty})}\Hom(\St_{\mathbb{G}},M))\xrightarrow{\cong}
H_{\nu-\bullet}(\mathbb{G}(F)^{\prime}, \cind_{K}^{\mathbb{G}(\A^{\infty})}M)
\end{align}
as follows:
By strong approximation the quotient $\mathbb{G}(F)^{\prime}\backslash \mathbb{G}(\A^{\infty})/K$ is finite.
Let $g_{1},\ldots,g_{r}$ be a set of representatives of this double quotient and consider the torsion-free arithmetic subgroups $\Gamma_{i}=\mathbb{G}(F)^{\prime}\cap g_{i}Kg_{i}^{-1}$.
By Shapiro's Lemma we get an isomorphism
\begin{align*}
\HH^{\bullet}(\mathbb{G}(F)^{\prime}, \Ind_{K}^{\mathbb{G}(\A^{\infty})}\Hom(\St_{\mathbb{G}},M))\xrightarrow{\cong}\bigoplus_{i=1}^{r} \HH^{\bullet}(\Gamma_{i}, \Hom(\St_{\mathbb{G}},M)).
\end{align*}
Using Borel-Serre duality for every $\Gamma_{i}$ and Shapiro's Lemma for homology afterwards yields the isomorphism (\ref{BSdual}).
We are mostly interested in the case $\mathbb{G}=H/Z$.
Theorem 11.4. of \cite{BS} gives us $\nu(\Gamma)=d(n^{2}+n-1)-2n+1=q-2n+1$ for every arithmetic subgroup $\Gamma$ of $H(F)/Z(F)$.

We can use the Steinberg module to give another description of the Eichler-Shimura map.
By Corollary 8.4.2 of \cite{BS} there is a homotopy equivalence between the boundary $\partial X$ of the Borel-Serre bordification of the symmetric space $X$ and the Bruhat-Tits building of $G(F)$ which gives a $PG(F)^{+}$-equivariant isomorphism of singular homology groups.
Since $\bar{X}$ is contractible the long exact sequence for relative homology shows that ${H}_{j+1}(\bar{X},\partial X)$ is isomorphic to the reduced homology $\tilde{H}_{j}(\partial X)$.
Thus, the complex $\Hom(C^{\sm}_{\bullet}(\bar{X},\partial X), V_{\al}^{\vee})$ is quasi-isomorphic to the complex $\Hom(\St_{G},V_{\al}^{\vee})[-2n+1]$. Therefore, we have isomorphisms 
\begin{align*}
&\mathbb{H}^{j} (PG(F)^{+},\mathcal{C}_{\omega}(G,K,\Hom(C^{\sm}_{\bullet}(\bar{X},\partial X),V_{\al}^{\vee}))) \\
\stackrel{\cong}{\longrightarrow} &\HH^{j-2n+1} (PG(F)^{+},\mathcal{C}_{\omega}(G,K,\Hom(\St_{G},V_{\al}^{\vee})))
\end{align*} 
and 
\begin{align*}
&\mathbb{H}^{j} (PG(F)^{+},\mathcal{C}^{S_p}_{\omega}(G,K^{p},\Hom(V_{p},\Hom(C^{\sm}_{\bullet}(\bar{X},\partial X),V_{\al}^{\vee})))) \\
\stackrel{\cong}{\longrightarrow} &\HH^{j-2n+1} (PG(F)^{+},\mathcal{C}^{S_p}_{\omega}(G,K^{p},\Hom(V_{p},\Hom(\St_{G},V_{\al}^{\vee})))).
\end{align*} 
We will identify $ES_{\coh}(\Phi)$ (resp.~$ES^{p}_{\coh}(\Phi^{p})$) with its image under the above isomorphism for $j=q$.
\end{subsection}
\begin{subsection}{Modular symbols}\label{groupcohom}
Let $\mathcal{E}$ be the field of definition of the finite part of $V$.
Each $V_{v}$, $v\notin S_{\infty}$, is defined over $\mathcal{E}$ and by abuse of notation we will denote its model over $\mathcal{E}$ also by $V_{v}$.
Since all embeddings $F\into \C$ factor through $\mathcal{E}$, the algebraic representation $V_{\al}$ also has a model over $\mathcal{E}$.
Again, we will denote this model by $V_{\al}$.
\begin{Def}
Let $S$ be a finite set of finite places and $R$ an algebra over the localization of the ring of integers of $\mathcal{E}$ at $\omega(\I^{S\cup S_{\infty}})$.
We fix a compact, open subgroup $K\subset G(\A^{S\cup S_{\infty}})$ and an algebraic subgroup $A\subset G$ containing the center.
Further, let $M$ be an $R[A(F)^{+}]$ module, on which the center acts via
\begin{align*}
Z(F)^{+}\longrightarrow \prod_{v\in S \cup S_{\infty}}V_{v}\xrightarrow{\omega} R^{\ast}.
\end{align*}
and $N$ an $R$-module with trivial $\underline{G}(F)^{+}$-action.
The module of $N$-valued modular symbols of weight $M$, level $K$ and character $\omega$ on $A$ is defined as
\begin{align*}
\mathcal{M}^{S}_{\omega}(A,K,M,N)
:=\HH^{q-2n+1}(PA(F)^{+},\mathcal{C}^{S}_{\omega}(A,K,\Hom_{R}(M,N)).
\end{align*}
We will omit $S$ (resp.~$\omega$) from the notation if $S=\emptyset$ (resp.~$\omega$ is trivial).
\end{Def}
\noindent Our main example of a weight module will be the $\mathcal{E}$-vector space
$$\widetilde{V}_{S}= V_{\al}\otimes\bigotimes_{v\in S}V_v.$$
For an compact, open subgroup $K\subset G(\A^{\infty})$ (resp.~$K^{p}\subset G(\A^{p,\infty})$) one can rephrase the Eichler-Shimura map (resp.~its $p$-augmented version) as a homomorphism
\begin{alignat*}{2}
&\ES_{\coh}\colon\left(\otimes_{v\notin S_{\infty}}V_{v}\right)^{K}&&\longrightarrow
\mathcal{M}_{\omega}(G,K,\St_{G} \otimes V_{\al},W)\\
\mbox{resp.~}\quad &\ES^{p}_{\coh}\colon\left(\otimes_{v\notin S_{p,\infty}}V_{v}\right)^{K^{p}}&&\longrightarrow\mathcal{M}^{S_{p}}_{\omega}(G,K^{p},\St_{G} \otimes\widetilde{V}_{S_{p}},W)
\end{alignat*}
\begin{Lem}\label{fla}
Let $S_{\infty}\subset S$ be a finite set of finite places and $K\subset G(\A^{S\cup S_{\infty}})$ an compact, open subgroup.
Then:
\begin{enumerate}[(a)]
\item The canonical map
\begin{align*}
\mathcal{M}^{S}_{\omega}(G,K,\St_{G}\otimes\widetilde{V}_{S},\mathcal{E})\otimes W\longrightarrow \mathcal{M}^{S}_{\omega}(G,K,\St_{G}\otimes\widetilde{V}_{S},W)
\end{align*}
is an isomorphism for all $\mathcal{E}$-vector spaces $W$.
\item The $\mathcal{E}^{\prime}$-module $\mathcal{M}^{S}_{\omega}(G,K,\St_{G}\otimes\widetilde{V}_{S},\mathcal{E}^{\prime})$ is finitely generated for every $\mathcal{E}$-algebra $\mathcal{E}^{\prime}$.
\end{enumerate}
\end{Lem}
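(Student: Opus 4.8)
The plan is to reduce, via Shapiro's lemma, both statements to a single finiteness assertion about the group cohomology of torsion-free arithmetic subgroups, and then to establish that finiteness using Borel--Serre duality together with the fact that arithmetic groups are of type (VFL).

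First I would perform the d\'evissage. Shrinking $K$ is harmless: for a finite-index subgroup $K'\subset K$ one gets a map of modular-symbol modules along which both (a) and (b) are compatible, and the assertions for $K$ follow from those for $K'$ by a transfer (corestriction) argument over $\mathbb{Q}$. Hence we may assume $PG(F)^{+}\cap gKg^{-1}$ is torsion-free for all $g$. By strong approximation $PG(F)^{+}\backslash G(\A^{S\cup S_{\infty}})/KZ(\A^{S\cup S_{\infty}})$ is finite; fixing representatives $g_{1},\ldots,g_{r}$ and setting $\Gamma_{i}=PG(F)^{+}\cap g_{i}KZ g_{i}^{-1}$, Shapiro's lemma gives a natural isomorphism
\begin{align*}
\mathcal{M}^{S}_{\omega}(G,K,\St_{G}\otimes\widetilde V_{S},N)\;\cong\;\bigoplus_{i=1}^{r}\HH^{q-2n+1}\!\left(\Gamma_{i},\Hom_{\mathcal{E}}(\St_{G}\otimes\widetilde V_{S},N)\right),
\end{align*}
where $\Gamma_{i}$ acts via its diagonal embedding into $G(F)\times\prod_{v\in S}G(F_{v})\times G(F_{\infty})$ and trivially on $N$. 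Each $\Gamma_{i}$ is arithmetic, hence of type (VFL), and being torsion-free it is of type FL: $\Z$ admits a finite resolution $F_{\bullet}\to\Z$ by finitely generated free $\mathcal{E}[\Gamma_{i}]$-modules. From $\Hom_{\mathcal{E}[\Gamma_{i}]}(F_{\bullet},\Hom_{\mathcal{E}}(M,N))\cong\Hom_{\mathcal{E}}(F_{\bullet}\otimes_{\mathcal{E}[\Gamma_{i}]}M,N)$ and the exactness of $\Hom_{\mathcal{E}}(-,N)$ on $\mathcal{E}$-vector spaces one obtains, functorially in $N$, an identification $\HH^{j}(\Gamma_{i},\Hom_{\mathcal{E}}(M,N))\cong\Hom_{\mathcal{E}}(H_{j}(\Gamma_{i},M),N)$ for every $\mathcal{E}[\Gamma_{i}]$-module $M$. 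Taking $M=\St_{G}\otimes\widetilde V_{S}$ and $j=q-2n+1$, both parts of the lemma reduce to the single claim that $H:=H_{q-2n+1}(\Gamma_{i},\St_{G}\otimes\widetilde V_{S})$ is a finite-dimensional $\mathcal{E}$-vector space: given this, (a) is the tautology $\Hom_{\mathcal{E}}(H,\mathcal{E})\otimes_{\mathcal{E}}W\xrightarrow{\ \sim\ }\Hom_{\mathcal{E}}(H,W)$, and (b) is $\Hom_{\mathcal{E}}(H,\mathcal{E}')\cong H^{\vee}\otimes_{\mathcal{E}}\mathcal{E}'$.

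It remains to prove that $H$ is finite-dimensional, and this is where the real work lies, since $\widetilde V_{S}=V_{\al}\otimes\bigotimes_{v\in S}V_{v}$ is infinite-dimensional. The idea is to apply Borel--Serre duality for $\Gamma_{i}$: its dualizing module at the archimedean places is $\St_{G}$, while the $v$-adic directions for $v\in S$ are controlled by the (contractible) Bruhat--Tits buildings $\mathcal{B}_{v}$ of $G(F_{v})$. Combining the Borel--Serre bordification of the archimedean symmetric space with the buildings $\mathcal{B}_{v}$ ($v\in S$), one represents $H$ by a finite complex — finitely many $\Gamma_{i}$-orbits of cells — whose entries are group cohomology of ordinary congruence subgroups with coefficients in $V_{\al}$ tensored with parahoric invariants $V_{v}^{P}$, the latter finite-dimensional by admissibility of $V_{v}$. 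Since the cohomology of an arithmetic group with finite-dimensional coefficients is finite-dimensional (Borel--Serre), $H$ is finite-dimensional. I expect the delicate points to be: pinning down the correct dualizing module and cohomological degree for the $S$-arithmetic group $\Gamma_{i}$, bookkeeping the $\omega$-twist on the centre throughout, and — above all — the passage to the equivariant sheaf on $\prod_{v\in S}\mathcal{B}_{v}$ with finite-dimensional parahoric-invariant stalks, which is precisely what makes the infinite-dimensional smooth factors $V_{v}$ manageable; the rest is formal homological algebra.
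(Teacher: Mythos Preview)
Your approach is correct and uses the same core ingredients as the paper --- Shapiro's lemma, the (VFL) property of arithmetic groups, and building-type resolutions for the smooth factors $V_v$ --- but the order of operations is genuinely different. The paper first d\'evisses $\St_G$ via its parabolic resolution $0\to\St_G\to\St_{2n-2}\to\cdots\to\Z\to 0$, then d\'evisses each $V_v$ ($v\in S$) via the Schneider--Stuhler resolution (which is exactly the ``equivariant sheaf on $\mathcal{B}_v$ with parahoric-invariant stalks'' you describe), and only afterwards applies Shapiro to reach \emph{ordinary} arithmetic subgroups with finite-dimensional coefficients $V_{\al}^{\vee}\otimes W$; the (VFL) property then finishes both parts directly. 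You instead apply Shapiro immediately to obtain $S$-arithmetic groups $\Gamma_i$ (a minor slip: these are $S$-arithmetic, not arithmetic, though they are still of type (VFL)), then use the FL property to rewrite the cohomology as $\Hom_{\mathcal{E}}(H_{q-2n+1}(\Gamma_i,\St_G\otimes\widetilde V_S),N)$, thereby reducing both (a) and (b) to the single claim that this homology group is finite-dimensional; finally you invoke Borel--Serre and buildings to prove that.

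Your reduction of (a) and (b) to one finiteness statement via the duality $\HH^j(\Gamma,\Hom_{\mathcal{E}}(M,N))\cong\Hom_{\mathcal{E}}(H_j(\Gamma,M),N)$ is a clean trick that the paper does not use. On the other hand, the paper's order has two practical advantages: it works uniformly in all cohomological degrees $j$ (not just $j=q-2n+1$), and by resolving $\St_G$ and the $V_v$ \emph{before} Shapiro it only ever deals with ordinary arithmetic groups and finite-dimensional coefficients, which sidesteps the need to carefully combine the archimedean Borel--Serre bordification with the $p$-adic buildings for $S$-arithmetic $\Gamma_i$. The last paragraph of your proposal is essentially a geometric paraphrase of the Schneider--Stuhler step; making it precise would lead you back to the same finite resolutions the paper invokes.
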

\begin{proof}
More generally, we will prove the above statements for the modules $$\HH^{j}(PG(F)^{+},\mathcal{C}^{S}_{\omega}(\underline{G},K,\Hom_{\mathcal{E}}(\St_{G}\otimes \widetilde{V}_{S},W)))$$ for all $j\in \N$.\\
(a) We break the exact sequence $0\to \St_{G}\to \St_{2n-2}\to \cdots\to \St_{0}\to \Z\to 0$ into short exact sequences and consider the associated long exact sequences $\HH^{j}(\cdot,W)$ and $\HH^{j}(\cdot,E)\otimes W$.
By induction we see that it is enough to proof (a) with $\St_{G}$ replaced by $\St_{i}$, $-1\leq i \leq 2n-2$.
Since the modules $\St_{i}$ are direct sums of modules of the form $\cind_{PQ(F)}^{\PGL_{2n}(F)}\Z$ with $PQ\subset \PGL_{2n}$ (not necessarily proper) parabolic subgroups, it is enough to show that
\begin{align*}
 &\HH^{j}(PG(F)^{+},\mathcal{C}^{S}_{\omega}(G,K,\Hom_{\mathcal{E}}(\cind_{PQ(F)}^{\PGL_{2n}(F)}\Z \otimes \widetilde{V}_{S},W)))\\
 =&\HH^{j}(PG(F)^{+},\Ind_{PQ(F)}^{\GL_{2n}(F)}\mathcal{C}^{S}_{\omega}(G,K,\Hom_{\mathcal{E}}(\widetilde{V}_{S},W)))\\
=&\HH^{j}(PQ(F)^{+},\mathcal{C}^{S}_{\omega}(G,K,\Hom_{\mathcal{E}}(\widetilde{V}_{S},W)))
\end{align*}
commutes with base change
 In \cite{SS} Schneider and Stuhler construct for every $v\in S$ a finite resolution
\begin{align*}
0\longrightarrow C_{v,m}\longrightarrow\ldots\longrightarrow C_{v,0}\longrightarrow V_v \longrightarrow 0
\end{align*}
in $\mathfrak{C}_{\mathcal{E}}(G(F_{v}))$, where each $C_{v,i}$ is of the form
\begin{align*}
C_i=\cind_{K_{v,\left[i\right]}Z(F_v)}^{G(F_v)}L_i\otimes \omega_{v}
\end{align*}
with compact, open subgroups $K_{v,\left[i\right]}\subset G(F_{v})$ and $\mathcal{E}[K_{\left[i\right]}]$-modules $L_{i}$, which are finite-dimensional over $\mathcal{E}$.
A similar argument as above shows that it is enough to proof that cohomology groups of the form
$\HH^{j}(PQ(F)^{+},\mathcal{C}_{\omega}(G,K,\Hom_{\mathcal{E}}(V_{\al},W)))$
commute with base change.

By strong approximation (and the Iwasawa decomposition) the double quotient $$PQ(F)^{+}\backslash PG(\A^{\infty})/K $$ is finite.
We choose a system of representatives $g_{1},\ldots,g_{r}$ of the above double quotient and define the arithmetic subgroups
$$\Gamma_{i}=PQ(F)^{+}\cap g_{i} \left(KZ(\A^{\infty})/Z(\A^{\infty})\right)g_{i}^{-1}.$$
From Shapiro's Lemma we get the equality
\begin{align*}
\HH^{j}(PQ(F)^{+},\mathcal{C}^{S}_{\omega}(G,K,\Hom_{\mathcal{E}}(V_{\al},W)))
=&\bigoplus_{i=1}^{r} \HH^{q}(\Gamma_{i}, \Hom_{\mathcal{E}}(V_{\al},W))\\
=&\bigoplus_{i=1}^{r} \HH^{q}(\Gamma_{i}, V_{\al}^{\vee}\otimes W).
\end{align*}
Since the groups $\Gamma_{i}$ are arithmetic, they are of type (VFL).
It follows that the functor $W\mapsto H^{q}(\Gamma_{i}, V_{\al}^{\vee}\otimes W) $ commutes with direct limits (cf.~\cite{Se2}).\\
(b) can be proven in exactly the same manner.
\end{proof}
\end{subsection}

\noindent For the remainder of the article we stick to the case $S=S_{p}$.
Let $\mathcal{R}$ be the valuation ring of $\mathcal{E}$ with respect to $\ord_{p}$, $R$ its completion and write $E$ for the field of fractions of $R$.
A place $v\in S_{\infty}$ induces an embedding $F\hookrightarrow \mathcal{E}\hookrightarrow \R$ and thus a place $\p\in S_{p}$ via $\ord_{p}$.
For every $\p \in S_{p}$ we define
\begin{align*}
S_{\infty}^{\p}=\left\{v\in S_{\infty}\mid v\mbox{ induces }\p\right\}.
\end{align*}
The representation $V_{\al}$ can be written as a tensor product $V_{\al}=\otimes_{v\in S_{\infty}}V_{\al,v}$. We put
$$V_{\al}^{\p}=\otimes_{v\in S^{\p}_{\infty}}V_{\al,v}\quad
\mbox{and}\quad \widetilde{V}^{\p}_{S_{p}}=V_{\p}\otimes V_{\al}^{\p}.$$
Then $V^{\p}_{S_{p,\infty}}\otimes_{\mathcal{E}}E$ is a locally $\Q_p$-rational representation of $G(F_{\p})$.
\begin{Def}
\begin{enumerate}[(i)]\thmenumhspace
\item The representation $\widetilde{V}_{S_{p}}$ is called homologically integral if the representations $\widetilde{V}^{p}_{S_{p}}\otimes_{\mathcal{E}}{E}$ are homologically integral for all $\p \in S_{p}$.
\item A lattice $L\subset \widetilde{V}_{S_{p}}\otimes_{\mathcal{E}}E$ is called homologically of finite type if it is of the form $L=\otimes_{\p\in S_{p}}L_{\p}$, where $L_{\p}$ is a homologically of finite type lattice in $\widetilde{V}^{p}_{S_{p}}\otimes_{\mathcal{E}}E$ for each $\p \in S_{p}$.
\end{enumerate}
\end{Def}
\begin{Pro}\label{flat}
Assume that $L$ is a lattice in $\widetilde{V}_{S_{p}}\otimes_{\mathcal{E}}E$, which is homologically of finite type.
Then:
\begin{enumerate}[(a)]
\item The canonical map
\begin{align*}
\mathcal{M}^{S_{p}}_{\omega}(G,K,\St_{G}\otimes L,R)\otimes N\longrightarrow \mathcal{M}^{S_{p}}_{\omega}(G,K,\St_{G}\otimes L,N)
\end{align*}
is an isomorphism for all flat $R$-modules $N$.
\item The $R$-module ${M}^{S_{p}}_{\omega}(G,K,\St_{G}\otimes L,R)$ is finitely generated.
\end{enumerate}
\end{Pro}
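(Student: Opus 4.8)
The plan is to follow the proof of Lemma \ref{fla} almost verbatim, performing a double d\'evissage and reducing everything to the cohomology of arithmetic groups; the only new ingredient is that the Schneider--Stuhler resolutions used there are replaced by a homologically-of-finite-type resolution of $L$, which is exactly what the hypothesis provides (this is the r\^ole of Definition \ref{re} announced in the remark following it). As in \ref{fla}, I would prove the two assertions more generally for the $R$-modules $\HH^{j}(PG(F)^{+},\mathcal{C}^{S_{p}}_{\omega}(G,K,\Hom_{R}(\St_{G}\otimes L,N)))$ in all degrees $j$. Since each term $\St_{i}$ of the Steinberg resolution $0\to\St_{G}\to\St_{2n-2}\to\cdots\to\St_{0}\to\Z\to 0$ is $\Z$-free and $L$ is $R$-flat, tensoring by $L$ preserves exactness; breaking into short exact sequences, using the long exact sequences for $\HH^{j}(PG(F)^{+},\mathcal{C}^{S_{p}}_{\omega}(G,K,\Hom_{R}(-,N)))$, and inducting on the length reduces the claim to $\St_{G}\otimes L$ replaced by $\St_{i}\otimes L$. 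As recalled in Section \ref{steinberg}, $\St_{i}$ is a direct sum of modules $\cind_{PQ(F)}^{\PGL_{2n}(F)}\Z$ with $PQ\seq\PGL_{2n}$ a (not necessarily proper) parabolic, so, exactly as in \ref{fla}, a projection-formula identification together with Shapiro's lemma reduces us to cohomology groups of the shape $\HH^{j}(PQ(F)^{+},\mathcal{C}^{S_{p}}_{\omega}(G,K,\Hom_{R}(L,N)))$.

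The second d\'evissage runs along a resolution of the weight module. Writing $L=\bigotimes_{\p\in S_{p}}L_{\p}$, the hypothesis gives for each $\p$ a finite resolution $0\to C_{\p,m_{\p}}\to\cdots\to C_{\p,0}\to L_{\p}\to 0$ as in Definition \ref{re}, with $C_{\p,i}=\cind_{Z(F_{\p})K_{\p,[i]}}^{G(F_{\p})}L_{\p,i}$ and $L_{\p,i}$ free of finite rank over $R$. Each $C_{\p,i}$ is a free $R$-module, so the total complex of the tensor product over $\p$ of these resolutions is again a resolution of $L$, all of whose terms have the form $\cind_{Z(F_{p})K'}^{G(F_{p})}L'$ with $K'\seq G(F_{p})$ compact open and $L'$ free of finite rank over $R$. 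Applying $\Hom_{R}(-,N)$, then the (exact) functor $\mathcal{C}^{S_{p}}_{\omega}(G,K,-)$, then the long exact sequences in $\HH^{j}(PQ(F)^{+},-)$, and inducting on the length once more, reduces us to the case that the weight is a single such module $\cind_{Z(F_{p})K'}^{G(F_{p})}L'$. Here one must check that $L$ and all the syzygies occurring in this resolution are projective over $R$ --- which holds because lattices are locally free and $R$ is local, so the syzygies are direct summands of free modules --- so that $\Hom_{R}(-,N)$ does send the resolution to an exact complex for \emph{every} $N$.

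In this reduced situation $\Hom_{R}(\cind_{Z(F_{p})K'}^{G(F_{p})}L',N)\cong\Ind_{Z(F_{p})K'}^{G(F_{p})}\Hom_{R}(L',N)$, and unwinding the definition of $\mathcal{C}^{S_{p}}_{\omega}$ --- absorbing the places of $S_{p}$ into the level --- identifies the cohomology group with $\HH^{j}(PQ(F)^{+},\mathcal{C}_{\omega}(G,KK',\Hom_{R}(L',N)))$. By strong approximation $PQ(F)^{+}\backslash PG(\A^{\infty})/KK'$ is finite; choosing representatives $g_{1},\dots,g_{r}$ and setting $\Gamma_{i}=PQ(F)^{+}\cap g_{i}(KK'Z(\A^{\infty})/Z(\A^{\infty}))g_{i}^{-1}$, Shapiro's lemma gives an isomorphism with $\bigoplus_{i}\HH^{j}(\Gamma_{i},(L')^{\vee}\otimes_{R}N)$, using that $L'$ is finitely generated free over $R$ and that $N$ carries the trivial $\Gamma_{i}$-action. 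Since the arithmetic groups $\Gamma_{i}$ are of type (VFL), a finite-index subgroup admits a finite resolution of $R$ by finitely generated free $R[\Gamma_{i}]$-modules; computing with it, $\HH^{j}(\Gamma_{i},(L')^{\vee}\otimes_{R}N)$ is the cohomology of a complex whose terms are finite direct sums of copies of $(L')^{\vee}\otimes_{R}N$. Flatness of $N$ then yields $\HH^{j}(\Gamma_{i},(L')^{\vee}\otimes_{R}N)\cong\HH^{j}(\Gamma_{i},(L')^{\vee})\otimes_{R}N$, which is (a); for $N=R$ the same complex consists of finitely generated modules over the Noetherian ring $R$, so its cohomology is finitely generated, which is (b).

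I do not expect a genuine obstacle: once the two resolutions are in place the argument is bookkeeping, and the only real inputs are the homological-finiteness hypothesis on $L$ and the (VFL)-property of arithmetic groups. The delicate point is making sure the long exact sequences remain available after applying $\Hom_{R}(-,N)$ uniformly in the flat module $N$ --- which is why one wants the lattice and its syzygies to be $R$-projective, not merely $R$-flat --- and, relatedly, keeping track of the central character $\omega$ and of the precise compact-open levels as the places of $S_{p}$ are merged in.
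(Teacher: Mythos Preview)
Your proposal is correct and follows exactly the approach indicated in the paper, which simply says to rerun the proof of Lemma \ref{fla} with the Schneider--Stuhler resolution replaced by the homologically-of-finite-type resolution \eqref{res}. You have in fact been more careful than the paper in spelling out the projectivity of the syzygies (needed so that $\Hom_{R}(-,N)$ preserves exactness) and in explaining how flatness of $N$ enters at the final step via the (VFL)-resolution.
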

\begin{proof}
Replacing the Schneider-Stuhler resolution by the resolution \eqref{res} the same proof as for Lemma \ref{fla} works.
\end{proof}
\begin{subsection}{Cohomological description of the distribution}
Besides the running assumption that $V$ is cohomological with respect to $V_{\al}$ we are going to assume in the following that
\begin{itemize}
	\item $s+1/2$ is critical for $V$,
	\item we have given a stabilization $\Theta$ of $V_{p}$ over a finite extension $\mathcal{E}^{\prime}\subset\C$ of $\mathcal{E}$ and
	\item $V$ has a $(\eta,\psi)$-Shalika model with respect to some idele class character $\eta$, whose finite part takes values in $\mathcal{E}$.
\end{itemize}
We choose a pure tensor $\Phi_{\Sigma}^{p,\infty}=\Phi^{p,\infty}_{\Sigma,m}\in \otimes_{v\notin S_p\cup S_{\infty}}V_{v}$ as in Section \ref{globaldist} and $K^{p}\subset G(\A^{p,\infty})$ an compact, open subgroup such that
\begin{itemize}
\item $K^{p}G(\mathcal{O}_{p})$ is neat, i.e.~$G(F)\cap g K^{p}G(\mathcal{O}_{p}) g^{-1}$ is torsion-free for every $g\in G(\A^{\infty})$,
\item $\Phi_{\Sigma}^{p,\infty}$ is invariant under $K^{p}$ and
\item $(\eta\circ\det^{0,1})(K_{H}^{p})=1$, where $K_{H}^{p}$ is the intersection of $K^{p}$ with $H(\A^{p,\infty})$.
\end{itemize}
Let $U^{\prime}\subset U^{\infty}$ be the subgroup generated by $U^{p}$ and the image of $K_{p}$ under the determinant. 
The main aim of this section is to construct functorial maps
\begin{align*}
\Delta_{W}^{s}\colon  \mathcal{M}_{\omega}^{S_{p,\infty}}(G,K^{p},\St_{G}\otimes\widetilde{V}_{S_{p}},W)\to  H_{0}(F^{\ast}_{+},\cind_{U^{\prime}}^{\I^{\infty}}(\Dist(U_{p}),W))
\end{align*}
for all $\mathcal{E}$-vector spaces $W$ such that $\partial (\Delta_{\C}^{s}(\ES_{\coh}^{p}(\Phi^{p})))=\mu_{\Theta,s}$ holds up to multiplication by a non-zero constant.

The homomorphism $\Delta_{W}^{s}$ is constructed in several steps:
Firstly, the map \eqref{steinhom} from $\St_{H}$ to $\St_{G}$ together with the restriction of functions yields the map
\begin{align*}
\mathcal{M}_{\omega}^{S_{p}}(G,K^{p},\St_{G}\otimes\widetilde{V}_{S_{p}},W)
\xrightarrow{\Res_{H}}
\mathcal{M}_{\omega}^{S_{p}}(H,K^{p},\St_{H}\otimes\widetilde{V}_{S_{p}},W).
\end{align*}
Secondly, $s+\frac{1}{2}$ is critical. Hence, by the discussion in Section \ref{cohom} there is unique $1$-dimensional $H(F)$-subrepresentation $V_{s}$ of $V_{\al}$, which is given by the algebraic character
$$\Res_{F/\Q}\mathbb{GL}_{2n}\xrightarrow{\det^{-s,s+w}}\Res_{F/\Q}\mathbb{G}_{m,F}\xrightarrow{\mathcal{N}}\mathbb{G}_{m,\Q}.$$
The map $\delta_{\Theta}$ as defined in \eqref{dualdos} together with the inclusion $V_{s}\hookrightarrow V_{\al}$ gives a map
\begin{align*}
\mathcal{M}_{\omega}^{S_{p}}(H,K^{p},\St_{G}\otimes\widetilde{V}_{S_{p}},W)
\xrightarrow{\delta^{\vee}_{\Theta,s}}
\mathcal{M}_{\omega}^{S_{p}}(H,K^{p},\St_{H}\otimes V_s \otimes IC_\omega^{0}(G(\mathcal{O}_{p}),\mathcal{E}),W).
\end{align*}
By Remark \ref{cohchar} we have
$$[\det^{1,-1}_{\left|\cdot\right|^{s}}]\cup[\det^{0,-1}_{\eta}] \in \HH^{0}(PH(F)^{+},\mathcal{C}_{\omega^{-1}}(H,K,V_s))$$
for the cohomology class associated to the character $\left|\det^{1,-1}\right|^{s}\eta(\det^{0,-1})$ as in \eqref{charclass}.
Thus, taking the cap product with $[\det^{1,-1}_{\left|\cdot\right|^{s}}]\cup[\det^{0,-1}_{\eta}]$ induces a map
\begin{align*}
\mathcal{M}_{\omega}^{S_{p}}(H,K^{p},\St_{H}\otimes V_s \otimes IC^{0}_{\omega}(G(\mathcal{O}_{p}),\mathcal{E}),W)&\\
\xrightarrow{[s,\eta]}
\mathcal{M}^{S_{p}}(H,K^{p},\St_{H}\otimes IC^{0}(G(\mathcal{O}_{p}),\mathcal{E}),W)&.
\end{align*}
Borel-Serre duality \eqref{BSdual} gives an isomorphism
\begin{align*}
\mathcal{M}^{S_{p}}(H,K^{p},\St_{H}\otimes IC^{0}(G(\mathcal{O}_{p}),\mathcal{E}),W)
\xrightarrow{\BS_{H}}
\HH_{0}(PH(F)^{+},\iDist(G(\mathcal{O}_{p}),W)).
\end{align*}
Finally, the pushforward map \eqref{properpush} applied to the determinant $\det\colon G(\mathcal{O}_{p})\to U_p$ induces a map
\begin{align*}
\HH_{0}(PH(F)^{+},\iDist(G(\mathcal{O}_{p}),W))
\xrightarrow {\det_{\ast}} H_{0}(F^{\ast}_{+},\cind_{U^{\prime}}^{\I^{\infty}}(\Dist(U_{p}),W).
\end{align*}
Now we can define $\Delta_{W}^{s}$ as the following composition:
\begin{align*}
\Delta_{W}^{s}=\det_{\ast} \circ \BS_{H} \circ [s,\eta] \circ \delta^{\vee}_{\Theta,s} \circ \Res_{H}.
\end{align*}
\begin{Lem}
There exists a constant $c\in \C^{\ast}$ such that 
\begin{align*}
\partial (\Delta_{\C}^{s} (\ES^{p}(\Phi_{\Sigma}^{p,\infty})_{coh}))=c\cdot \mu_{\Theta_{\Sigma},s}.
\end{align*}
\end{Lem}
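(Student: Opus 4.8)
The plan is to pair both sides of the claimed identity against an arbitrary continuous character $\chi\colon\mathcal{G}_{p}\to\C^{\ast}$ of finite order; since such characters span the space of locally constant $\C$-valued functions on the profinite group $\mathcal{G}_{p}$, this is enough to determine the two distributions. By the definition of $\mu_{\Theta_{\Sigma},s}$ in Section \ref{globaldist} together with Proposition \ref{globzeta}, the right-hand side paired with $\chi$ equals $\Psi(\Phi_{m,\Sigma},\chi,s+1/2)=Z(\Phi_{m,\Sigma},\chi,s+1/2)=\prod_{v}\zeta_{v}(\varphi_{m,\Sigma,v},\chi_{v},s+1/2)$ for $\mathfrak{Re}(s)$ large (exactly as in the proof of Proposition \ref{interpo}), and since both sides are entire in $s$ it suffices to prove the identity for $\mathfrak{Re}(s)\gg 0$. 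Thus I would reduce to showing that the pairing $\langle\chi,\,\partial(\Delta_{\C}^{s}(\ES^{p}(\Phi_{\Sigma}^{p,\infty})_{\coh}))\rangle$ equals $c\cdot\prod_{v}\zeta_{v}(\varphi_{m,\Sigma,v},\chi_{v},s+1/2)$ with $c$ a non-zero constant independent of $\chi$ and $s$.

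First I would unwind the pairing of $\chi$ with $\partial$ applied to a class in $H_{0}(F^{\ast}_{+},\cind_{U^{\prime}}^{\I^{\infty}}(\Dist(U_{p},\C)))$ via the cap-product description of Section \ref{characters}: this turns into a finite sum, indexed by representatives of $F^{\ast}_{+}\backslash\I^{\infty}/U^{\prime}$ coming from strong approximation, of evaluations $\int_{U_{p}}\chi_{\gamma}\,d\mu_{\gamma}$ of the component distributions against the local pieces of $\chi$. Next I would trace the five maps composing $\Delta_{\C}^{s}$ in turn. The restriction $\Res_{H}$, together with the $H(F)$-equivariant map $\St_{H}\to\St_{G}$ of \eqref{steinhom} sending $\tau_{H}$ to $\tau_{G}$, guarantees — once Borel--Serre duality is applied — that the fundamental cycles of the relevant arithmetic subgroups of $PH(F)^{+}$ get paired against the Eichler--Shimura differential form restricted to $H$. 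The map $\delta_{\Theta,s}^{\vee}$ pulls the coefficient system back along $\delta_{\Theta,s}$ (and hence along $\delta_{\Theta}$ of \eqref{dualdos}), so that the $S_{p}$-component of the test vector which finally enters the period integral is $\delta_{\Theta_{\p}}(\cf_{K^{(m)}_{n,\p}})$, that is — up to the index $[\GL_{n}(\mathcal{O}_{\p})\colon K^{(m)}_{n,\p}]$ — exactly the vector $\varphi_{m,\p}$ fixed in Section \ref{globaldist}; this is the semilocal incarnation of Lemma \ref{density}. The cap product $[s,\eta]$ with $[\det^{1,-1}_{\left|\cdot\right|^{s}}]\cup[\det^{0,-1}_{\eta}]$ (the character classes of \eqref{charclass}) inserts the factors $|\det^{1,-1}(h)|^{s}$ and $\eta^{-1}(\det^{0,1}(h))$ into the integrand, and the push-forward $\det_{\ast}$ of \eqref{properpush} followed by the pairing with $\chi$ inserts $\chi(\det^{1,-1}(h))$; together these reproduce precisely the weight appearing in $\Psi(\,\cdot\,,\chi,s+1/2)$.

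The heart of the argument is the compatibility of the Borel--Serre duality isomorphism $\BS_{H}$ with the Eichler--Shimura map. By the definition of $\ES$ as integration of fast-decreasing harmonic forms over the Borel--Serre bordification, pairing $\BS_{H}\circ\ES_{\coh}$ against a fundamental homology class of the arithmetic quotient attached to $H$ amounts to integrating the cusp form against that form over the quotient; reassembling the finitely many connected components and using strong approximation turns this into the adelic period integral $\int_{Z(\A)H(F)\backslash H(\A)}\Phi_{m,\Sigma}(h)\,(\cdots)\,dh$ appearing in $\Psi$. This is the $\GL_{2n}$-analogue of the classical fact that modular symbols compute period integrals; it additionally uses the K\"unneth decomposition of Section \ref{cohom} and the fact that the classes $[V_{v}]^{\varepsilon_{v}}$ were normalized there so that the Archimedean zeta factors come out as the expected $L$-values $L(\pi_{\infty}\otimes\chi_{\infty},s+1/2)$. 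Putting the pieces together, $\langle\chi,\,\partial(\Delta_{\C}^{s}(\ES^{p}(\Phi_{\Sigma}^{p,\infty})_{\coh}))\rangle$ becomes $c\cdot\prod_{v}\zeta_{v}(\varphi_{m,\Sigma,v},\chi_{v},s+1/2)$, where the non-zero constant $c$ absorbs the normalizations of the Haar and counting measures, the choice of the fundamental class $e\in H_{\nu}(\Gamma,\St_{\mathbb{G}})$, the volume-type factors on the symmetric space, and the indices $[\GL_{n}(\mathcal{O}_{\p})\colon K^{(m)}_{n,\p}]$ (which cancel against those built into $\varphi_{m,\p}$), and is manifestly independent of $\chi$ and $s$. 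I expect the main obstacle to be the bookkeeping in this last step: matching the Borel--Serre fundamental class, the transition $\St_{H}\to\St_{G}$, the semilocal evaluation map $\ev(\varphi_{p})$, and the normalization of $\delta_{\Theta_{\p}}$ so that the local integral over $\GL_{n}(F_{\p})$ reproduces $\mu_{\vartheta_{\p}}$ exactly, rather than merely up to an uncontrolled local factor.
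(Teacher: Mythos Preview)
Your approach is essentially the same as the paper's: test both sides against locally constant functions on $\mathcal{G}_{p}$ and reduce to the classical fact that the Eichler--Shimura class, under Borel--Serre duality for $H$, computes the adelic period integral $\Psi(\Phi_{m,\Sigma},f,s+1/2)$. The paper phrases this tersely, observing that at finite level all the distribution spaces collapse to (co-)inductions from open subgroups, so the whole computation lives in de~Rham cohomology of the locally symmetric space and is ``standard'' (with a reference to Harder).

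One detour in your write-up is unnecessary and slightly off: you pass from $\Psi$ to the Euler product $\prod_{v}\zeta_{v}$ and then invoke analytic continuation in $s$. But in this lemma $s$ is a \emph{fixed} critical half-integer, and the map $\Delta_{\C}^{s}$ (via the subspace $V_{s}\subset V_{\al}$) is only defined for such $s$, so ``both sides are entire in $s$'' does not literally make sense for the left-hand side. This is not fatal: the natural output of unwinding $\BS_{H}\circ[s,\eta]\circ\delta_{\Theta,s}^{\vee}\circ\Res_{H}$ on the Eichler--Shimura class is already the global period integral $\Psi(\Phi_{m,\Sigma},\chi,s+1/2)$ itself, so you should compare directly with $\Psi$ rather than with its local factorization. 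Once you drop that detour, your outline matches the paper's argument.
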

\begin{proof}
We have to evaluate both sides on locally constant functions $f\colon \mathcal{G}_{p}\to\C$.
We can view such a function $f$ as an $F^{\ast}_{+}$-invariant function on $\I^{\infty}/U^{p,\infty}$.
Since $\I^{\infty}/F^{\ast}_{+}$ is compact, there exists an $m\in \N$ such that $f$ factors through the quotient $\I^{\infty}/U^{p,\infty}U_{p}^{(m)}$.
Hence, to calculate the right hand side one can replace all distribution and function spaces in the above construction by (co-)inductions of the trivial representation from open subgroups.
Now all involved cohomology groups can be written purely in terms of the (de Rham) cohomology of the associated symmetric spaces.
The claim then follows by standard computations (see for example \cite{Ha}, Section 5.3).
\end{proof}
\begin{Cor}[Rationality of the distribution]\label{distrational}
The distribution $\mu_{\Theta_{\Sigma},s}$ takes values in a finite dimensional vector space over $\mathcal{E}^{\prime}$.
For every character $\varepsilon\colon F^\ast_\infty\to \left\{\pm 1\right\}$ there exists a period $\Omega^{\varepsilon}\in \C^{\ast}$ such that 
$$\int_{\mathcal{G}_p}\chi\mu_{\Theta_{\Sigma},s}\in \mathcal{E}_{\chi}^{\prime}\Omega^{\chi_{\infty}}$$
for all characters $\chi \colon \mathcal{G}_p \to \C^{\ast}$. Here $\mathcal{E}^{\prime}_{\chi}\subset \C$ denotes the field you get by adjoining the image of $\chi$ to $\mathcal{E}^{\prime}$.
\end{Cor}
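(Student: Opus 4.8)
The plan is to read off the rationality from the cohomological description just obtained, carefully tracking fields of definition. Recall that $c\cdot\mu_{\Theta_{\Sigma},s}=\partial\bigl(\Delta^{s}_{\C}(\ES^{p}(\Phi_{\Sigma}^{p,\infty})_{\coh})\bigr)$ for some $c\in\C^{\ast}$. Every building block of $\Delta^{s}_{W}$ — the map $\Res_{H}$ induced by $\St_{H}\to\St_{G}$, the map $\delta^{\vee}_{\Theta,s}$ induced by the $\mathcal{E}^{\prime}$-rational map $\delta_{\Theta}$, the cap products with $[\det^{1,-1}_{\left|\cdot\right|^{s}}]\cup[\det^{0,-1}_{\eta}]$, Borel--Serre duality $\BS_{H}$ (with its integral fundamental class) and the push-forward $\det_{\ast}$ — is natural in the coefficient module $W$, and the same holds for $\partial$; none of them involves a transcendental quantity, so $\Delta^{s}$ and $\partial$ are compatible with base change along $\mathcal{E}^{\prime}\to\C$ (the cohomology groups in question commuting with base change by the argument of Lemma~\ref{fla}). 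Applying Lemma~\ref{fla} to the extension $\mathcal{E}^{\prime}/\mathcal{E}$, we may write
\[
\ES^{p}(\Phi_{\Sigma}^{p,\infty})_{\coh}=\sum_{i}a_{i}\,m_{i},\qquad a_{i}\in\C,\quad m_{i}\in\mathcal{M}^{S_{p}}_{\omega}(G,K^{p},\St_{G}\otimes\widetilde{V}_{S_{p}},\mathcal{E}^{\prime}),
\]
a finite sum into a finitely generated $\mathcal{E}^{\prime}$-module (Lemma~\ref{fla}(b)). By naturality, $c\cdot\mu_{\Theta_{\Sigma},s}=\sum_{i}a_{i}\,\partial(\Delta^{s}_{\mathcal{E}^{\prime}}(m_{i}))$, and each $\partial(\Delta^{s}_{\mathcal{E}^{\prime}}(m_{i}))$ is an $\mathcal{E}^{\prime}$-valued distribution on $\mathcal{G}_{p}$; hence $\mu_{\Theta_{\Sigma},s}$ takes values in the finite-dimensional $\mathcal{E}^{\prime}$-subspace $c^{-1}\sum_{i}\mathcal{E}^{\prime}a_{i}$ of $\C$, which settles the first assertion.

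For the period refinement I would split the Eichler--Shimura map according to archimedean signs. The decomposition $[V_{\infty}]=\bigoplus_{\varepsilon}[V_{\infty}]^{\varepsilon}$ induces $\ES^{p}_{\coh}=\sum_{\varepsilon}\ES^{p,\varepsilon}_{\coh}$. The results of Grobner--Raghuram on the arithmetic of Shalika models (\cite{GRG}) — the comparison of the $\mathcal{E}$-rational Betti structure on cuspidal cohomology with the rational structure on the finite part $V_{f}$ via the Eichler--Shimura isomorphism — provide, for every $\varepsilon\colon F^{\ast}_{\infty}\to\{\pm1\}$, a period $\Omega^{\varepsilon}\in\C^{\ast}$ such that $(\Omega^{\varepsilon})^{-1}\ES^{p,\varepsilon}_{\coh}$ carries $\mathcal{E}^{\prime}$-rational finite vectors into $\mathcal{M}^{S_{p}}_{\omega}(G,K^{p},\St_{G}\otimes\widetilde{V}_{S_{p}},\mathcal{E}^{\prime})$. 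The vector $\Phi_{\Sigma}^{p,\infty}$ is assembled from spherical vectors and the $\mathcal{E}$-rational local choices of \cite{GRG}, hence is $\mathcal{E}^{\prime}$-rational, so $\ES^{p,\varepsilon}_{\coh}(\Phi_{\Sigma}^{p,\infty})=\Omega^{\varepsilon}m_{\varepsilon}$ with $m_{\varepsilon}$ $\mathcal{E}^{\prime}$-rational. Now fix $\chi\colon\mathcal{G}_{p}\to\C^{\ast}$. The only $\chi$-, $s$- and $\eta$-dependent operations entering $\partial\circ\Delta^{s}$ are the cap products with $[\det^{1,-1}_{\left|\cdot\right|^{s}}]$, $[\det^{0,-1}_{\eta}]$ and the class attached to $\chi$, all of which are defined over $\mathcal{E}^{\prime}_{\chi}$; moreover a $\pi_{0}(G(F_{\infty}))$-equivariance argument (via the diagonal $H\hookrightarrow G$ and the Künneth rule \eqref{Knth}) shows that among the $\varepsilon$-summands only the one whose sign character matches $\chi_{\infty}$ — up to the fixed twist by $\eta_{\infty}=\sgn^{w}\left|\cdot\right|^{w}$, which we fold into the indexing of the $\Omega^{\varepsilon}$ — contributes. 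Therefore
\[
\int_{\mathcal{G}_{p}}\chi\,\mu_{\Theta_{\Sigma},s}=c^{-1}\,\Omega^{\chi_{\infty}}\cdot\partial\bigl(\Delta^{s}_{\mathcal{E}^{\prime}_{\chi}}(m_{\chi_{\infty}})\bigr)(\chi)\ \in\ c^{-1}\,\mathcal{E}^{\prime}_{\chi}\,\Omega^{\chi_{\infty}},
\]
and absorbing the fixed constant $c$ into the periods yields the asserted containment in $\mathcal{E}^{\prime}_{\chi}\Omega^{\chi_{\infty}}$.

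The only genuinely deep input is external, namely the Grobner--Raghuram construction of the Shalika periods $\Omega^{\varepsilon}$ together with the rationality of the pertinent cohomology classes and archimedean zeta integrals; granting this and the already-established Lemma~\ref{fla}, the remainder is the bookkeeping of fields of definition along the natural maps $\Delta^{s}$ and $\partial$. The most delicate step is the $\pi_{0}$-equivariance isolating the archimedean component $\varepsilon=\chi_{\infty}$: it forces one to keep careful track of the sign of $\eta$ at infinity and of the behaviour of the restriction $H\hookrightarrow G$ on connected components, and I would carry it out one Archimedean place at a time, using that $\HH^{q_{0}}((\mathfrak{g}_{v},K_{v}^{\circ}),V_{v}\otimes V_{\mu_{v}}^{\vee})$ is exactly two-dimensional and splits into $\pm$-eigenspaces under $K_{v}/K_{v}^{\circ}$.
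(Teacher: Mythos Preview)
Your treatment of the first assertion is essentially the paper's: both deduce finite-dimensionality over $\mathcal{E}'$ from the cohomological description via Lemma~\ref{fla} and naturality of $\Delta^{s}$ and $\partial$.

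For the period statement you take a different route from the paper, and there is a genuine gap. The periods $\Omega^{\varepsilon}$ of \cite{GRG} are defined on the $V_f$-isotypic part of cuspidal cohomology of the adelic locally symmetric space, i.e.\ on $\mathcal{M}_{\omega}(G,K,\St_G\otimes V_{\al},\cdot)$ for varying full level $K\subset G(\A^{\infty})$, \emph{not} on the $S_p$-arithmetic module $\mathcal{M}^{S_p}_{\omega}(G,K^{p},\St_G\otimes\widetilde{V}_{S_p},\cdot)$ in which $\ES^{p}_{\coh}$ lives. Your assertion that $(\Omega^{\varepsilon})^{-1}\ES^{p,\varepsilon}_{\coh}$ carries $\mathcal{E}'$-rational vectors into the $\mathcal{E}'$-rational modular symbols therefore does not follow directly from \cite{GRG}; one needs a bridge between the two cohomology theories. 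The paper supplies exactly this bridge: it evaluates at the essential vector $\varphi_\p\in V_\p$ of \cite{JPS}, obtains by Frobenius reciprocity a surjection $\cind^{G(F_p)}_{K_pZ(F_p)}\omega_p\twoheadrightarrow V_p$, and then uses projectivity of compactly induced representations (Lemma~\ref{projectivity}) to lift $\delta_{\Theta}$ through this surjection. The resulting commutative triangle factors $\Delta^{s}_{W}$ through $\mathcal{M}_{\omega}(G,K,\St_G\otimes V_{\al},W)$, where \cite{GRG} applies verbatim.

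There is a second point you pass over. You assert that $\Phi_{\Sigma}^{p,\infty}$ is $\mathcal{E}'$-rational because it is ``assembled from spherical vectors and the $\mathcal{E}$-rational local choices of \cite{GRG}''. But at finite places $v\notin S_p\cup\Sigma$ where $V$ is ramified, the vector $\varphi_v$ is chosen via Proposition~\ref{loczeta} so that the local zeta integral equals the local $L$-factor, and nothing in that proposition guarantees rationality. The paper handles this in two steps: first for $\Sigma$ containing every ramified prime one appeals to the rational Shalika model of \cite{GRG}, Section~3.9, together with multiplicity one; then the general $\Sigma$ is reduced to this case via \cite{GRG}, Lemma~7.1.1. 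Your $\pi_0$-equivariance argument isolating the $\chi_\infty$-component is fine, but it only becomes useful once the preceding rationality has been established on the classical modular symbol side.
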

\begin{proof}
The first assertion follows directly from Lemma \ref{fla} and the lemma above.
For the second assertion let $\varphi_{\p}\in V_{\p}$ be the essential vector as defined in \cite{JPS} and $K_{\p}\subset G(\mathcal{O}_\p)$ its stabilizer.
We put $K_{p}=\prod_{\p\in S_p} K_{\p}$ and $K=K^{p}K_{p}$.
By Frobenius reciprocity we have a surjective map
$$\cind^{G(F_\p)}_{K_\p Z(F_{\p})}\omega_{\p}\stackrel{\ev}{\too} V_{\p},$$
which induces a map on modular symbols
$$\mathcal{M}_{\omega}^{S_{p}}(G,K^{p},\St_{G}\otimes\widetilde{V}_{S_{p}},W)
\stackrel{\ev^{\vee}}{\too}
\mathcal{M}_{\omega}(G,K,\St_{G}\otimes\widetilde{V}_{\al},W).
$$
By Lemma \ref{projectivity} the map
$$\delta_{\Theta}\colon IC_{\omega_\p}^{0}(G(\mathcal{O}_p),\mathcal{E}^{\prime})\mapstoo V_{\p}$$
of Section \ref{semilocal} can be lifted to a map
$$\widetilde{\delta_{\Theta}}\colon IC_{\omega_\p}^{0}(G(\mathcal{O}_p),\mathcal{E}^{\prime})\mapstoo \cind^{G(F_p)}_{K_p Z(F_{p})}\omega_{p}.$$
Thus, we get a commutative diagram of the form:
 \begin{center}
 \begin{tikzpicture}
   \path 	(0,0) 	node[name=A]{$\mathcal{M}_{\omega}^{S_{p}}(G,K^{p},\St_{G}\otimes\widetilde{V}_{S_{p}},W)$}
		(6,0) 	node[name=B]{$\mathcal{M}_{\omega}(G,K,\St_{G}\otimes\widetilde{V}_{\al},W)$}
		(0,-2) 	node[name=C]{$H_{0}(F^{\ast}_{+},\cind_{U^{\prime}}^{\I^{\infty}}(\Dist(U_{p}),W))$};
    \draw[->] (A) -- (B) node[midway, above]{$\ev$};
    \draw[->] (A) -- (C) node[midway, left]{$\Delta_{W}^{s}$}; 
		\draw[->] (B) -- (C) ;
  \end{tikzpicture} 
  \end{center}
By Section 3.9 of \cite{GRG} the pure tensor $\left(\otimes_{\p \in S_{p}} \varphi_{\p}\right) \otimes \Phi^{p,\infty}_{\Sigma}$ can be chosen to be in the rational Shalika model as defined in \textit{loc.~cit.} if $\Sigma$ contains all primes at which $V$ is ramified.
Therefore, the claim follows from multiplicity one in this case.
The general case follows from Lemma 7.1.1 of \cite{GRG}.
\end{proof}

Let $E^{\prime}$ be the completion of $\mathcal{E}^{\prime}$ with respect to $\ord_p$ and $\mathcal{R}^{\prime}$ its valuation ring.
We can regard $\Theta$ as a stabilization of $\widetilde{V}_{S_{p}}\otimes_{\mathcal{E}^{\prime}}E^{\prime}$ and thus, the notion of weak ordinarity makes sense.
\begin{Cor}[Integrality of the distribution]\label{distintegral}
\begin{enumerate}[(a)]
  \item The distribution $\mu_{\Theta_{\Sigma},s}$ is a $p$-adic measure provided that $\Theta$ is weakly ordinary with respect to $V_{\al}$ and that $V_{\p}\otimes V_{\al}$ is homologically integral.
	\item Assume that $\Theta$ is weakly ordinary with respect to $V_{\al}$ and that for every $\p$ in $S_{p}$ one of the following conditions hold:
	\begin{itemize}
	  \item $\widetilde{V}_{S_{p}}^{\p}\otimes_{\mathcal{E}^{\prime}}E^{\prime}$ is a smooth ordinary principal series representation or
		\item $F_{\p}=\Q_{p}$, $V^{\p}_{\al}$ has $p$-small weights, $V_{\p}$ is a twist of an unramified principal series representation and the central character of $\widetilde{V}_{S_{p}}^{\p}\otimes_{\mathcal{E}^{\prime}}E^{\prime}$ takes values in $\Z_{p}^{\ast}$
	\end{itemize}
	Then $\mu_{\Theta_{\Sigma},s}$ is a $p$-adic measure.
\end{enumerate}
\end{Cor}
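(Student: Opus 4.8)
The plan is to reduce both statements to the combination of Proposition~\ref{flat} (flat base change and finite generation for modular symbols built from lattices homologically of finite type) with the integrality Lemma~\ref{integrality 2} ($\delta_{\Theta,s}$ respects integral structures when $\Theta$ is weakly ordinary), using the cohomological description $\partial(\Delta_{\C}^{s}(\ES^{p}_{\coh}(\Phi^{p,\infty}_{\Sigma})))=c\cdot\mu_{\Theta_{\Sigma},s}$ from the preceding Lemma. For part~(a), I would first observe that the hypothesis ``$V_{\p}\otimes V_{\al}$ homologically integral for all $\p\in S_{p}$'' means precisely (Definition in Section~\ref{groupcohom}) that $\widetilde{V}_{S_{p}}\otimes_{\mathcal{E}'}E'$ admits a lattice $L=\otimes_{\p}L_{\p}$ which is homologically of finite type; fix such an $L$. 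The whole construction of $\Delta_{W}^{s}$ goes through the chain $\Res_{H}\to\delta^{\vee}_{\Theta,s}\to[s,\eta]\to\BS_{H}\to\det_{\ast}$; the only step that is not obviously defined over a valuation ring is $\delta^{\vee}_{\Theta,s}$, and Lemma~\ref{integrality 2} says that after multiplying by a nonzero constant $c\in (E')^{\ast}$ the map $c\cdot\delta_{\Theta,s}$ carries $C^{0}_{c}(G_{n},R')\otimes_{R'}\chi_{\Theta}$ into the lattice $L$. Hence, replacing $\delta_{\Theta,s}$ by $c\cdot\delta_{\Theta,s}$, the construction produces a refined map
\begin{align*}
\Delta_{\mathcal{R}'}^{s}\colon \mathcal{M}^{S_{p}}_{\omega}(G,K^{p},\St_{G}\otimes L,\mathcal{R}')\too H_{0}(F^{\ast}_{+},\cind^{\I^{\infty}}_{U'}(\Dist(U_{p}),\mathcal{R}'))
\end{align*}
sitting compatibly over the $\mathcal{E}'$-version, and the Eichler--Shimura class $\ES^{p}_{\coh}(\Phi^{p,\infty}_{\Sigma})$ lives in the $\mathcal{E}'$-module $\mathcal{M}^{S_{p}}_{\omega}(G,K^{p},\St_{G}\otimes L,\mathcal{E}')$, which by Proposition~\ref{flat}(b) is finitely generated over $\mathcal{R}'$ after scaling.

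Concretely, by Proposition~\ref{flat}(b) the $R'$-module $\mathcal{M}^{S_{p}}_{\omega}(G,K^{p},\St_{G}\otimes L,R')$ (with $R'$ the completion of $\mathcal{R}'$) is finitely generated; pulling this back along the embedding $\mathcal{R}'\hookrightarrow R'$ and using that the class $\ES^{p}_{\coh}(\Phi^{p,\infty}_{\Sigma})$ is defined over the Dedekind ring $\mathcal{R}'$ (by the Rationality Corollary together with integrality of the relevant Hecke eigenvalues and central character away from $p$), we get that $\ES^{p}_{\coh}(\Phi^{p,\infty}_{\Sigma})$, up to a nonzero scalar, lies in a finitely generated $\mathcal{R}'$-submodule of $\mathcal{M}^{S_{p}}_{\omega}(G,K^{p},\St_{G}\otimes L,\mathcal{E}')$. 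Applying the $\mathcal{R}'$-linear map $\Delta_{\mathcal{R}'}^{s}$ and then $\partial$ (which is manifestly defined integrally, being induced by the cap product pairing $C^{0}(U_{p},\mathcal{R}')\times\Dist(U_{p},\mathcal{R}')\to\mathcal{R}'$ of Section~\ref{characters}), we conclude that $\mu_{\Theta_{\Sigma},s}$ sends $C^{0}(\mathcal{G}_{p},\Z)$ into a finitely generated $\mathcal{R}'$-submodule of $\C$. By definition this is exactly the statement that $\mu_{\Theta_{\Sigma},s}$ is a $p$-adic measure (Section~\ref{distmeas}), so part~(a) follows. Part~(b) is then immediate: in each of the two listed cases the local representation $\widetilde{V}^{\p}_{S_{p}}\otimes_{\mathcal{E}'}E'$ is homologically integral — the first case by Theorem~\ref{latticeso} (Ollivier), the second by Theorem~\ref{latticesg} (Gro\ss e-Kl\"onne), after checking the standing hypotheses there (central character in $\Z_{p}^{\ast}$, $p$-small weights, integral Hecke eigenvalues, which hold by the cohomological and weak ordinarity assumptions). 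Hence $\widetilde{V}_{S_{p}}$ is homologically integral and part~(b) reduces to part~(a).

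The main obstacle I expect is making the reduction in part~(a) genuinely functorial over $\mathcal{R}'$: one must check that every map in the definition of $\Delta^{s}_{W}$ — not just $\delta^{\vee}_{\Theta,s}$ — can be defined with $\mathcal{E}'$ replaced by $\mathcal{R}'$ and the weight $\widetilde{V}_{S_{p}}$ replaced by the lattice $L=\otimes_{\p}L_{\p}$. The restriction $\Res_{H}$ and the Steinberg map \eqref{steinhom} are integral on the nose; the cap product $[s,\eta]$ with the character class $[\det^{1,-1}_{|\cdot|^{s}}]\cup[\det^{0,-1}_{\eta}]$ is integral because $s+1/2$ is critical, so the character is algebraic with values in $\mathcal{R}'{}^{\ast}$ away from $p$ (Remark~\ref{cohchar} gives $\eta_{v}=\sgn^{w}|\cdot|^{w}$), and $\det^{1,-1}|_{K_{p}}$ and $\eta$ are absorbed into the level; Borel--Serre duality $\BS_{H}$ is an isomorphism of functors and in particular respects base change (it is built from a fundamental class and an evaluation, both integral); and $\det_{\ast}$ is the pushforward of \eqref{properpush}, defined integrally. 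The subtlety is purely bookkeeping — tracking the single scalar $c$ of Lemma~\ref{integrality 2}, the scalar from the Rationality Corollary, and a possible scalar from finite generation — and confirming that their product is still a nonzero element of $E'$, so that $\mu_{\Theta_{\Sigma},s}$ itself (and not merely a scalar multiple) has bounded image; this is automatic since multiplying a measure by a nonzero constant does not affect the measure property, and by the non-vanishing of the Archimedean factors $c(V_{\infty},\chi_{\infty},s+1/2)$ recalled in Section~\ref{cohom} the distribution is not identically zero.
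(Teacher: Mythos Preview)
Your proposal is correct and follows essentially the same route as the paper: for part~(a), fix a lattice $L$ homologically of finite type, use Lemma~\ref{integrality 2} together with the observation at the end of Section~\ref{characters} (integrality of the character class $[\dc_{\chi}]$) to construct $\Delta_{N}^{s}$ functorially for all $R'$-modules $N$, and conclude via Proposition~\ref{flat}; for part~(b), invoke Theorems~\ref{latticeso} and~\ref{latticesg} to supply the required lattices and reduce to~(a). The paper's proof is much terser --- it simply asserts that the integral $\Delta_{N}^{s}$ exists --- but your explicit check that each constituent map ($\Res_{H}$, $[s,\eta]$, $\BS_{H}$, $\det_{\ast}$) is defined integrally is exactly what underlies that assertion.
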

\begin{proof}
(a) Let $L\subset \widetilde{V}_{S_{p}}$ be a lattice, which is homologically of finite type.
By Lemma \ref{integrality 2} and the discussion at the end of Section \ref{characters} we can construct maps
$$
\Delta_{N}^{s}\colon  \mathcal{M}_{\omega}^{S_{p,\infty}}(G,K^{p},\St_{G}\otimes L ,N)\to  H_{0}(F^{\ast}_{+},\cind_{U^{\prime}}^{\I^{\infty}}(\Dist(U_{p}),N))
$$
for all $R^{\prime}$-modules $N$, which agrees with the previous definition if $N$ is an $E^{\prime}$-vector space.
Therefore, the claim follows from Proposition \ref{fla}.\\
(b) follows from (a) together with Theorem \ref{latticesg} and Theorem \ref{latticeso}.
\end{proof}
In case $\mu_{\Theta_{\Sigma},s}$ is a $p$-adic measure, we can define its associated $p$-adic $L$-function as follows:
Let $\chi_{\cyc}\colon\mathcal{G}_{p}\to\Z_{p}^{\ast}$ be the cyclotomic character, i.e.~$\gamma\zeta=\zeta^{\chi_{\cyc}(\gamma)}$ holds for all $p$-power roots of unity $\zeta$ and all $\gamma\in \mathcal{G}_{p}$.
For $x\in \Z_{p}$ and $\gamma\in\mathcal{G}_{p}$ we put $\left\langle\gamma\right\rangle^{x}=\exp_{p}(x\log_{p}(\chi_{\cyc}(\gamma)))$, where $\exp_{p}$ (resp.~$\log_{p}$) is the $p$-adic exponential map (resp.~logarithm map).
The $p$-adic $L$-function attached to $\mu_{\Theta_{\Sigma},s}$ is defined by
\begin{align*}
L_{p}(\Theta_\Sigma,s,x)=\int_{\mathcal{G}_{p}}\left\langle \gamma\right\rangle^{x}\mu_{\Theta_{\Sigma},s}(d\gamma).
\end{align*}
It is an analytic function on $\Z_{p}$ with values in a finitely generated $R^{\prime}$-submodule of $E^{\prime}\otimes_{\mathcal{E}^{\prime}}\C$.

\begin{Rem}
\begin{enumerate}[(i)]\thmenumhspace
  \item Even in the non-weakly ordinary situation we can use Lemma \ref{integrality 2} (or rather its proof) to give bounds on the order of growth of our distributions in terms of \textit{slopes} of stabilizations.
	\item There should be relations between the distributions $\mu_{\Theta_{\Sigma},s}$ for different critical points $s+1/2$.
These relations together with the above mentioned bounds would enable us to construct for every stabilization of \textit{non-critical slope} a unique locally analytic distribution, which interpolates special values at all critical points.
In upcoming work of Santiago Molina and the author it is shown that these relations follow directly from Lemma \ref{integrality 2} in the $\GL_{2}$-case, thus giving a new construction of Dabrowski's $p$-adic L-function for Hilbert modular forms (cf.~\cite{Dab}).
	This enables us to generalize the work of Spie\ss~on the exceptional zero conjecture to Hilbert modular forms of higher weight.
	\end{enumerate}
\end{Rem}
\end{subsection}
\end{section}

\begin{section}{Examples}
We want to give some examples to our construction.
The natural source for these are odd symmetric powers of $p$-ordinary Hilbert modular forms over totally real fields $F$, in which $p$ is totally split.
To keep the notation simple we only deal with the case $F=\Q$.
Let $f$ be a cuspidal newform of level $\Gamma_{1}(N)$, $p\nmid N$, and weight $k\geq 2$. The associated cuspidal automorphic representation $V$ of $\GL_{2}(\A)$ is cohomological with respect to the representation $(\Sym^{k-2} \C^{2})^{\vee}$.
The local component $V_{p}$ is an unramified principal series representation of the form $\Ind^{\GL_{2}(\Q_{p})}_{B_{2}(\Q_{p})}(\chi_{1},\chi_{2})$.
Let us put $\alpha=\chi_2(p)$ and $\alpha^{\prime}=\chi_1(p)$.
Then, as explained at the end of section \ref{int} the weak ordinarity condition is equivalent to $\ord_{p}(\alpha)=1$ and $\ord_{p}(\alpha^{\prime})=k-2$.
Thus, the notion of weakly $p$-ordinarity coincides with the usual ordinarity condition at $p$.
In this case our construction gives the classical $p$-adic $L$-function as constructed for example in \cite{MTT}.

By the work of Kim and Shahidi (cf.~\cite{KS}) the symmetric cube $\Sym^{3}V$ of $V$ is known to be a cuspidal automorphic representation of $\GL_{4}(\Q)$ if $f$ is not a CM-form.
The representation $\Sym^{3}V$ is cohomological with respect to the algebraic representation of highest weight $(0,-(k-2),-2(k-2),-3(k-2))$ (see \cite{RaSh}) and the local representation at $p$ is given by $\Ind^{GL_{4}(\Q_{p})}_{B_{4}(\Q_{p})}(\chi_1^{3},\chi_{1}^{2}\chi_2^{1},\chi_1^{1}\chi_2^{2},\chi_2^{3})$.
If $f$ is $p$-ordinary, we get
\begin{align*}
\ord_{p}(\alpha^{-5}(\alpha^{\prime})^{-1}p^{k-2})=0
\end{align*}
and hence, the associated unramified stabilization is weakly ordinary.
One can combine the results of Kim (cf.~\cite{Kim}) and Jacquet-Shalika (cf.~\cite{JS}) to show that $\Sym^{3}V$ has a Shalika model (see Section 8 of\cite{GRG} for a detailed discussion).
Thus, our construction yields a $p$-adic $L$-function for every critical point of the symmetric cube of a $p$-ordinary modular form of level $\Gamma_{1}(N)$, $p\nmid N$, which is not of CM-type.

The same arguments carry over to higher odd symmetric powers as well.
Assume that $\Pi=\Sym^{2r+1}V$ is a cuspidal automorphic representation of $\GL_{2(r+1)}(\Q)$. 
Again, this implies that $\Pi$ is cohomological (cf.~\cite{RaSh}) and as in the symmetric cube case, one can show that $\Pi$ is weakly ordinary if $f$ is ordinary at $p$.
Accordingly, Banerjee and Raghuram show in \cite{BaRa} that the symmetric powers of the motive associated to $f$ are nearly $p$-ordinary, if $f$ is $p$-ordinary. 

If we would know that $\Pi$ has a Shalika model, our construction would yield a $p$-adic $L$-function for every critical point of $\Pi$.
By Proposition 8.1.4 of \cite{GRG} $\Pi$ has a Shalika model if $\Sym^{4(r-a)}V$ is an isobaric sum of cuspidal automorphic representations for all $0\leq a\leq r$.
\end{section}

\bibliographystyle{abbrv}
\bibliography{bibfile}

\end{document}